\documentclass[11pt]{amsart}
\usepackage{amsmath, amssymb, amsthm}
\usepackage{hyperref}
\usepackage{tikz}
\usepackage{subfigure}
\usepackage{ytableau}
\usepackage{array}
\usepackage{amsfonts}
\usepackage{mathtools}
\theoremstyle{plain}
\newtheorem{theorem}{Theorem}[section]
\newtheorem{lemma}[theorem]{Lemma}
\newtheorem{corollary}[theorem]{Corollary}
\newtheorem{proposition}[theorem]{Proposition}

\newtheorem{remark}[]{Remark}
\newtheorem{claim}[]{Claim}
\newtheorem{case}[]{Case}

\renewcommand{\Im}{\operatorname{Im}}

\numberwithin{equation}{section}
\begin{document}
	\title[Asymptotic formula for biregular overpartitions
	]
	{Asymptotic formula for biregular overpartitions} 
	\author{JAYANTA BARMAN}
	\address{JAYANTA BARMAN\\ Department of Mathematics \\
		Indian Institute of Technology Kharagpur \\
		Kharagpur-721302,  India.} 
	\email{b1999jayanta@gmail.com, b1999jayanta@kgpian.iitkgp.ac.in}
	
	
	\subjclass[2020]{11P82, 11N37, 05A20, 05A17}
	\keywords{Saddle-point method, Asymptotic formula, Biregular partitions, Turán inequalities}
	\begin{abstract} 
		Let $\bar{p}_{s,t}(N)$ be the number of overpartitions of $N$ into parts that are not divisible by $s$ or $t$. In this paper, we obtain asymptotic formulas for $\bar{p}_{s,t}(N)$ for relatively prime integers $s>t\ge 10$ via the saddle-point method. Our results cover different ranges of the parameters $s$ and $t$ relative to $N$. As an application, we show that $\bar{p}_{s,t}(N)$ satisfies higher-order Turán inequalities for all sufficiently large $N$ by using a result of Griffin, Ono, Rolen, and Zagier.
	\end{abstract}
	
	\maketitle
	\section{Introduction}\label{section1}
	Integer partitions are fundamental objects in combinatorics, geometry, 
	mathematical physics, number theory, and representation theory \cite{james}. Among 
	these, the partition function $p(N)$ plays a central role. In their 
	celebrated work, Hardy and Ramanujan \cite{hardy} introduced the circle 
	method and established the celebrated asymptotic formula for $p(N)$. 
	Since these foundational works, the circle method has been widely employed to derive asymptotic formulas and Rademacher-type series \cite{rademacher} for various restricted partition functions (see, e.g., \cite{bridges, hagis1964, hagis1965, hagis1971, sills}).
	
	One important generalization of ordinary partitions is the notion of an overpartition. An overpartition of a positive integer $N$ is a partition of $N$ in 
	which the first occurrence of each distinct part may be overlined. For 
	example, the overpartitions of $4$ are 
	$4$, $\overline{4}$, $3+1$, $\overline{3}+1$, $3+\overline{1}$, $\overline{3}+\overline{1}$, $2+2$, $\overline{2}+2$, $2+1+1$, $\overline{2}+1+1$, $2+\overline{1}+1$, $\overline{2}+\overline{1}+1$, $1+1+1+1$, and $\overline{1}+1+1+1$. The concept of overpartitions was introduced by Corteel and Lovejoy \cite{corteel}. Let $\bar{p}(N)$ denote the number of overpartitions of $N$. Since their introduction, overpartitions have attracted considerable attention, and many of their arithmetic, combinatorial, and analytic properties have been investigated in
	\cite{corteel, engel, hirschhorn}. 
	The asymptotic behavior of $\bar{p}(N)$ is well known and is given by
	\begin{align}\label{eq-pN}
		\bar{p}(N)
		=
		\frac{1}{8N}\exp\left(\pi\sqrt{N}\right)
		\left(1+O\left(N^{-\frac{1}{2}}\right)\right).
	\end{align} 
	We next recall the notion of $s$-regular partitions. For an integer 
	$s>1$, a partition of $N$ is called $s$-regular if none of its parts is 
	divisible by $s$. Let $p_s(N)$ denote the number of $s$-regular 
	partitions of $N$. A classical theorem of Glaisher \cite{Glaisher1883} 
	states that $p_s(N)$ is equal to the number of partitions of $N$ in 
	which no part occurs more than $s-1$ times. Various arithmetic, multiplicative, and monotonicity properties of
	$p_s(N)$ have been extensively investigated in
	\cite{barman2024arithmetic, beckwith2016multiplicative, carlson, cui,
		penniston, singh2023proofs}.
	
	The notion of an $s$-regular partition extends naturally to overpartitions. An overpartition of $N$ is called $s$-regular if none of its parts is divisible by $s$. Let $\overline{p}_s(N)$ denote the number of 
	$s$-regular overpartitions of $N$. This function has also received 
	considerable attention, and several of its arithmetic and combinatorial 
	properties have been studied in 
	\cite{alanazi, barman2018congruences, ray, shen}.
	
	These partitions naturally lead to the notion of biregular partitions.
	More precisely, a partition of $N$ is called $(s,t)$-biregular if none
	of its parts is divisible by either $s$ or $t$. Let $p_{s,t}(N)$ denote the number of
	$(s,t)$-biregular partitions of $N$. Laughlin and Parsell \cite{Jmc}
	established a Hardy--Ramanujan--Rademacher-type infinite series for
	$p_{s,t}(N)$ when $s$ and $t$ are relatively prime square-free integers.
	
	A natural analogue of biregular partitions in the setting of
	overpartitions is obtained by imposing the same divisibility
	restrictions on the parts. Namely, an overpartition of $N$ is called
	$(s,t)$-biregular if none of its parts is divisible by either $s$ or
	$t$. Let $\overline{p}_{s,t}(N)$ denote the number of
	$(s,t)$-biregular overpartitions of $N$. The generating function for
	$\overline{p}_{s,t}(N)$ was studied by Nadji et al.~\cite{nadji}, who
	also investigated several arithmetic properties of these
	overpartitions. Subsequently, further arithmetic and combinatorial
	properties of $(s,t)$-biregular overpartitions were studied in
	\cite{alanazi2, anakha, ghoshal}.
	
	Motivated by these developments, we obtain asymptotic formulas for $\bar{p}_{s,t}(N)$ for relatively prime integers $s>t\ge 10$ via the saddle-point method. Our results cover different ranges of the parameters $s$ and $t$ relative to $N$.
	Although the circle method is remarkably powerful and has been
	successfully applied to a wide variety of partition functions, its
	implementation can involve substantial technical complexity.
	The saddle-point method provides an alternative approach that is
	particularly well suited to extracting coefficient asymptotics
	directly from generating functions. A general framework for this approach was developed by Meinardus \cite{meinardus} without relying on the transformation properties of the generating function under the modular group. This method has since been extended and applied to various partition functions \cite{murty}. Later, Debruyne and Tenenbaum \cite{debruyne} independently developed a saddle-point method and applied it to several partition functions. Recently, Tyler \cite{tyler} introduced a saddle-point approach to establish asymptotic formulas for $t$-core partitions with explicit error terms, which can be used to derive asymptotic formulas for a broad class of restricted partition functions. Related
	applications of the saddle-point method can also be found in
	\cite{barman1, barman2026asymptotic, barman2}.
	Although our approach extends the framework of Tyler's method, the present problem requires a separate analysis:
	the generating function for $\overline{p}_{s,t}(N)$ and the resulting saddle-point
	equation differ from those arising in the $t$-core setting considered
	by Tyler. As an application, we prove
	that, for fixed $s$ and $t$ with $\gcd(s,t)=1$, the sequence
	$\{\overline{p}_{s,t}(N)\}_{N\geq 0}$ satisfies higher-order Turán
	inequalities for all sufficiently large $N$.  We therefore begin by recalling the necessary
	definitions and notation.

	
	Let $q=\exp(2\pi iz)$, $z=x+iy$ and $y>0$. Recall that the Dedekind eta function $\eta(z)$ is defined by
	\begin{equation*}
		\eta(z)=\exp\left(\frac{\pi iz}{12}\right)\prod_{n=1}^{\infty}(1-\exp(2\pi inz)).   
	\end{equation*}
	The generating function for $\overline{p}_{s,t}(N)$ \cite{nadji} is given by
	\begin{align}\label{eq-Hzst}
		\notag
		H(q,s,t)=\sum_{N=0}^{\infty}\bar{p}_{s,t}(N)q^{N}&=\prod_{n=1}^{\infty}\frac{\left(1+q^{n}\right)\left(1+q^{stn}\right)\left(1-q^{sn}\right)\left(1-q^{tn}\right)}{\left(1-q^{n}\right)\left(1-q^{stn}\right)\left(1+q^{sn}\right)\left(1+q^{tn}\right)}\\
		&=\frac{\eta(2z)\eta(2stz)\eta(sz)^{2}\eta(tz)^{2}}{\eta(z)^{2}\eta(stz)^{2}\eta(2sz)\eta(2tz)},
	\end{align}
	as
	\begin{align*}
		\prod_{n=1}^{\infty}\frac{1+q^{n}}{1-q^{n}}
		&= \prod_{n=1}^{\infty}\frac{1-q^{2n}}{(1-q^{n})^{2}}
		=\frac{\eta(2z)}{\eta(z)^{2}}.
	\end{align*}
	By Cauchy's integral formula, we have
	\begin{align*}
		\bar{p}_{s,t}(N)=\frac{1}{2\pi i}\int_{C}\frac{H(q,s,t)}{q^{N+1}}dq,  
	\end{align*}
	where $C$ is a simple positively oriented loop around the origin, located entirely within the unit circle. For a fixed value of $y$, as $x$ varies over any interval of length $1$, $q$ moves along a full circle of radius $e^{-2\pi y}$. Therefore, we can write $\bar{p}_{s,t}(N)$ as
	\begin{align}\label{eq-integral}
		\bar{p}_{s,t}(N)&=\int_{-1/2}^{1/2} \exp\left( -2\pi izN\right)H(z,s,t)dx.
	\end{align} 
	We use the functions $\mu_k$ ($k\geq1$) from \cite{tyler}, defined by
	\begin{equation}\label{demu(z)}
		\mu_{k}(z)=-\frac{z^{k+1}}{2\pi i} \left(\frac{d}{dz}\right)^{k} \log\eta(z).
	\end{equation}
	Let
	\begin{align}\label{eq-bksty}
		\notag
		b_k(s,t,y)&= 4st\mu_k(iy)+4\mu_k(stiy)+t\mu_k(2siy)+s\mu_k(2tiy)\\
		& -st\mu_{k}(2iy)-\mu_k(2stiy)-4t\mu_k(siy)-4s\mu_k(tiy).  
	\end{align}
	We are now ready to state our main theorems.
	\begin{theorem}\label{thm-1.1}
		Suppose that $N>0$ and $s>t\ge10$ are relatively prime integers. 
		\newline
		(i) There exists a unique solution $y>0$ to the equation
		\begin{align}\label{eq-solmain}
			&\frac{-b_1(s,t,y)}{2sty^{2}}=N.
		\end{align}
		(ii) Corresponding to this value of $y$, $\bar{p}_{s,t}(N)$ hold the following approximation:
		\begin{align*}
			\bar{p}_{s,t}(N)= \frac{y^{\frac{3}{2}}\sqrt{2st}\,\exp\left(2\pi Ny\right)H(iy,s,t)}{\sqrt{b_2(s,t,y)}}
			\left(1+O\left(y\right)\right).     
		\end{align*}
	\end{theorem}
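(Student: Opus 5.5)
The plan is to treat \eqref{eq-integral} by the saddle-point method, following Tyler's framework. First I would rewrite the generating function as a product of eta quotients,
\begin{align*}
\log H(z,s,t)&=\log\eta(2z)+\log\eta(2stz)+2\log\eta(sz)+2\log\eta(tz)\\
&\quad-2\log\eta(z)-2\log\eta(stz)-\log\eta(2sz)-\log\eta(2tz).
\end{align*}
From \eqref{demu(z)} we have $\frac{d}{dz}\log\eta(az)=-2\pi i\,\mu_1(az)/(az^2)$, and more generally $\bigl(\frac{d}{dz}\bigr)^k\log\eta(az)=-2\pi i\,\mu_k(az)/(az^{k+1})$. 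Collecting the coefficients and multiplying by $-2st$ reproduces exactly the combination in \eqref{eq-bksty}:
\[
\Bigl(\frac{d}{dz}\Bigr)^{k}\log H(z,s,t)=\frac{2\pi i\, b_k(s,t,z)}{2st\,z^{k+1}}.
\]
Setting the $z$-derivative of $-2\pi i zN+\log H$ equal to zero at $z=iy$ then gives precisely \eqref{eq-solmain}.

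For part (i), I would avoid the eta asymptotics and argue through the $q$-expansion. Let $\mathcal{A}=\{n\ge1: s\nmid n,\ t\nmid n\}$. Then
\[
\log H=\sum_{n\in\mathcal A}\log\frac{1+q^n}{1-q^n}=\sum_{n\in\mathcal A}\sum_{k\ \mathrm{odd}}\frac{2q^{nk}}{k}=\sum_{m\ge1}a_m q^m,
\]
with $a_m\ge0$ and $a_1=2$. Using the derivative identity above, the left side of \eqref{eq-solmain} equals $\sum_m m a_m e^{-2\pi m y}$. This function is strictly decreasing and continuous in $y>0$. It tends to $0$ as $y\to\infty$, and to $\infty$ as $y\to0^+$, since $\mathcal A$ has positive density. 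Hence there is a unique solution $y$ for each $N>0$.

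For part (ii), the key steps are as follows.

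(a) Asymptotics of $b_k$. I would use $\eta(-1/z)=\sqrt{z/i}\,\eta(z)$ to get $\mu_k(iy)=\mu_k^{\mathrm{main}}(iy)+(\text{exponentially small})$ whenever the argument $ay$ is small, with $\mu_k^{\mathrm{main}}$ explicit (the $-\pi/(12y)$ and $\tfrac12\log$ terms). When $ay$ is large, I would instead use $\log\eta(iay)=-\pi a y/12+O(e^{-2\pi a y})$ directly. This should give $-b_1\asymp st\,y^{-1}$ and $b_2\asymp st\,y^{-1}$ with positive implied constants, so that $y\asymp N^{-1/2}$. Here the hypothesis $t\ge10$ is used to keep the density factor $(1-\tfrac1s)(1-\tfrac1t)$ and the competing terms under uniform control.

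(b) Major arc $|x|\le y^{1+\delta}$. Taylor expand $\log H(iy+x)$ to third order; the linear term cancels by the saddle equation. The quadratic term is $-\pi b_2 x^2/(st\,y^3)$. The cubic remainder is bounded by $|b_3|\,|x|^3/(st\,y^4)$, which is small on this arc, and $b_3\ll st/y$ by the same asymptotics as in (a). Completing the Gaussian integral gives
\[
\sqrt{\frac{st\,y^{3}}{b_2}}=\frac{y^{3/2}\sqrt{2st}}{\sqrt{b_2}}\cdot\frac{1}{\sqrt2}
\]
up to the normalizing constant. Tracking the constants carefully, the error is $O(y)$, coming from the square of the cubic term and the fourth-order term.

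(c) Minor arc $y^{1+\delta}\le|x|\le\tfrac12$. Positivity of the $a_m$ gives
\[
\log H(iy)-\operatorname{Re}\log H(x+iy)=\sum_m a_m e^{-2\pi m y}\bigl(1-\cos2\pi m x\bigr)\ge 2\sum_{n\in\mathcal A}e^{-2\pi ny}\bigl(1-\cos 2\pi n x\bigr).
\]
I would bound the right-hand side below by $c\min(x^2y^{-3},y^{-1})\ge c\,y^{2\delta-1}$, comparing the sum over $\mathcal A$ with the full sum via inclusion–exclusion on the multiples of $s$, $t$ and $st$. This makes the minor arc contribution $\ll H(iy)e^{2\pi Ny}\exp(-c\,y^{2\delta-1})$, which is negligible.

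The main obstacle I expect is uniformity in $s,t$ relative to $N$. The theorem places no size restriction on $s,t$, so the terms $\mu_k(stiy)$, $\mu_k(2stiy)$, and possibly $\mu_k(siy)$, fall into the large-argument regime where the modular transformation is unhelpful. In that regime, both the positivity of $b_2$ and the bound $b_3/b_2\ll1$ must be verified by combining the two regimes. I would first try to prove the single estimate $b_3\ll b_2$ uniformly, by splitting according to whether $sy$ and $ty$ are small or large.
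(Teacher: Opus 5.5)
Your framework (saddle point on the imaginary axis, Gaussian major arc, negligible minor arc) is the paper's, but the two load-bearing inputs come from a different source.

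\textbf{Comparison with the paper.} The paper works throughout with the eta quotient. For (i) it differentiates the saddle function to get $-b_2/(2sty^3)$ and cites Lemma~\ref{lemma-mu2} for $b_2>0$. For the minor arc (Lemma~\ref{pro-error}) it combines a bound on $|\eta(iy)/\eta(z)|$ with modular estimates for the other eta factors, regime by regime. Its major arc is the window $|x|<y/3$, controlled by $|b_3/b_2|<6$ and $|\ell/b_2|<36$ (Lemmas~\ref{lemma-mu3} and~\ref{lemma-mu4}) together with Tyler's perturbed-Gaussian integral lemma.

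You instead use that $\log H=\sum_{n\in\mathcal{A}}\log\frac{1+q^n}{1-q^n}$ has nonnegative coefficients, and this buys genuine uniformity. Your monotonicity of $\sum_m ma_me^{-2\pi my}$ holds on all of $(0,\infty)$, whereas Lemma~\ref{lemma-mu2} is only proved for $y\le\frac{1}{10}$ and in three of the regimes for $sy,ty,sty$. Likewise your minor-arc bound is uniform in $s,t$, whereas Lemma~\ref{pro-error} assumes $t\ge 50$ and again treats only three regimes.

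Positivity also dissolves the obstacle you name at the end:
\begin{itemize}
\item comparing $a_m$ with the coefficients for $\mathcal{A}=\mathbb{N}$ gives $|b_k|\ll_k st$;
\item inclusion--exclusion on $\sum_{n\in\mathcal{A}}n^2e^{-2\pi ny}$ gives $b_2\gg st$;
\item $|(\log H)^{(k)}(x+iy)|\le|(\log H)^{(k)}(iy)|$ bounds the off-axis Taylor remainder.
\end{itemize}
So no splitting by the sizes of $sy$ and $ty$ is needed. What the paper's route buys is explicit constants, and explicit eta asymptotics that it reuses to make $y$, $b_2$ and $H(iy,s,t)$ explicit in Theorems~\ref{thm-1.2}--\ref{thm-1.4}.

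\textbf{Two slips to fix.} First, in (a), for small $y$ one has $-b_1\asymp st$ and $b_2\asymp st$, not $st\,y^{-1}$. By your own expansion, $-b_1=2sty^2\sum_m ma_me^{-2\pi my}$ and $b_2=4\pi sty^3\sum_m m^2a_me^{-2\pi my}$, with the sums of order $y^{-2}$ and $y^{-3}$; for $2sty<1$ the paper gets $b_2\approx(s-1)(t-1)/4$. Only this matches your conclusion $y\asymp N^{-1/2}$ (your stated scaling would give $y\asymp N^{-1/3}$). It also matches your minor-arc bound $x^2y^{-3}$, which is exactly the quadratic term.

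The correct scaling also pins down the $\delta$ you left unspecified. The Gaussian has width $y^{3/2}$, the cubic term on $|x|\le y^{1+\delta}$ is $\ll y^{3\delta-1}$, and the minor arc saves $\exp(-cy^{2\delta-1})$. Hence any $\frac13<\delta<\frac12$ works. With the scaling you wrote, the cubic term would force $\delta>\frac23$, which is incompatible with the minor arc.

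Second, the quadratic term is $-\pi b_2x^2/(2sty^3)$; you dropped the $\frac{1}{2!}$ from Taylor's formula. With it, the Gaussian integral is exactly $y^{3/2}\sqrt{2st}/\sqrt{b_2}$, with no stray factor $1/\sqrt{2}$.
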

	While the above formula is general, its direct application is inconvenient because the parameter $y$ is defined implicitly. In the following theorem, we apply Theorem~\ref{thm-1.1}$(i)$ to determine $y$ explicitly in the regime $2sty < 1$, thereby enabling us to derive a concrete asymptotic formula for $\bar{p}_{s,t}(N)$ from Theorem~\ref{thm-1.1}$(ii)$.
	\begin{theorem}\label{thm-1.2}
		Suppose that $s>t\ge 10$ are relatively prime integers. Then, for any fixed $\epsilon>0$ with $2st \le \frac{2\pi}{\left(\frac{1}{2} + \epsilon\right) \log N} \sqrt{\frac{16stN}{(s-1)(t-1)}}$, we have
		\begin{align*}
			\bar{p}_{s,t}(N)=\frac{1}{2\sqrt{2}\,N^{\frac{3}{4}}}\left(\frac{(s-1)(t-1)}{st}\right)^{\frac{1}{4}}\exp\left(\pi\sqrt{\frac{N(s-1)(t-1)}{st}}\right)\left(1+O\left(N^{-\frac{1}{2}}\right)\right).   
		\end{align*}
	\end{theorem}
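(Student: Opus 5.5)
The plan is to make Theorem~\ref{thm-1.1} explicit. We evaluate every $\mu_k(ciy)$ and $\log\eta(ciy)$ with $c\in\{1,2,s,t,st,2s,2t,2st\}$ through the modular transformation $\eta(-1/z)=\sqrt{z/i}\,\eta(z)$. For $z=ciy$ this gives
\[
\log\eta(ciy)=-\frac{\pi}{12cy}-\frac12\log(cy)+E_c(y),\qquad E_c(y)=O\!\left(e^{-2\pi/(cy)}\right),
\]
and similarly for its $z$-derivatives. Differentiating the main part and using the definition \eqref{demu(z)} yields
\[
\mu_1(ciy)=\frac{cy}{4\pi}-\frac1{24}+\epsilon_{1,c},\qquad \mu_2(ciy)=-\frac{cy}{4\pi}+\frac1{12}+\epsilon_{2,c}.
\]
Here $\epsilon_{k,c}$ comes from differentiating $E_c$; since each $z$-derivative of $E_c$ is multiplied by $z$ in $\mu_k$, one expects $\epsilon_{k,c}=O\bigl((cy)^{-k}e^{-2\pi/(cy)}\bigr)$. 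The worst exponential comes from the largest argument $c=2st$. The hypothesis on $2st$ is exactly what makes it negligible: once we know $y\asymp\sqrt{(s-1)(t-1)/(16stN)}$, it becomes $2sty\le 2\pi/((\frac12+\epsilon)\log N)$, so $e^{-2\pi/(2sty)}\le N^{-1/2-\epsilon}$.

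Next we substitute these expansions into \eqref{eq-bksty}. The coefficients of the linear terms are $4st+4st+2st+2st-2st-2st-4st-4st=0$, so they cancel identically. The constant terms give $-\tfrac1{24}\cdot3(s-1)(t-1)$ for $b_1$ and $\tfrac1{12}\cdot 3(s-1)(t-1)$ for $b_2$. Hence
\[
b_1=-\tfrac{(s-1)(t-1)}8+\delta_1,\qquad b_2=\tfrac{(s-1)(t-1)}4+\delta_2,
\]
with $\delta_1,\delta_2$ being the weighted sums of the $\epsilon_{k,c}$. Equation \eqref{eq-solmain} then reads $N=\frac{(s-1)(t-1)}{16sty^2}(1+O(\delta_1))$, so
\[
y=y_0\bigl(1+O(\delta_1)\bigr),\qquad y_0=\sqrt{\frac{(s-1)(t-1)}{16stN}}.
\]
Note that $y_0\le \frac14 N^{-1/2}$ uniformly in $s,t$, so the factor $1+O(y)$ of Theorem~\ref{thm-1.1} is already $1+O(N^{-1/2})$.

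For $H(iy,s,t)$ we use \eqref{eq-Hzst} and sum the expansions of $\log\eta$ with multiplicities $(+1,+1,+2,+2,-2,-2,-1,-1)$ for $c=(2,2st,s,t,1,st,2s,2t)$.
\begin{itemize}
\item The logarithmic terms contribute $-\tfrac12\log\frac{2\cdot 2st\cdot s^2t^2}{(st)^2\cdot 2s\cdot 2t}=0$.
\item The polar terms give
\[
-\frac{\pi}{12y}\Bigl(\tfrac12+\tfrac1{2st}+\tfrac2s+\tfrac2t-2-\tfrac2{st}-\tfrac1{2s}-\tfrac1{2t}\Bigr)=\frac{\pi(s-1)(t-1)}{8sty}.
\]
\end{itemize}
Combining with $2\pi Ny$, the exponent $2\pi Ny+\log H(iy)$ equals $\frac{\pi(s-1)(t-1)}{4sty}$ at $y=y_0$, which is $\pi\sqrt{N(s-1)(t-1)/(st)}$. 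The exponent is stationary in $y$ at the saddle, so replacing $y$ by $y_0$ costs only a second-order error, $O(N\,y_0^2\delta_1^2)$. The prefactor becomes
\[
\frac{y_0^{3/2}\sqrt{2st}}{\sqrt{(s-1)(t-1)/4}}=\frac{1}{2\sqrt2\,N^{3/4}}\left(\frac{(s-1)(t-1)}{st}\right)^{1/4},
\]
as claimed.

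The main obstacle is bounding the exponentially small errors uniformly in $s,t$. The concern is that $\delta_1$ carries coefficients as large as $4st$ together with factors $(cy)^{-1}$ coming from the derivatives. My first step would be to pair each coefficient with its own exponential: the large coefficients $4st$, $st$ multiply $\mu_k(iy)$, $\mu_k(2iy)$, whose errors are $e^{-\pi/y}$ and hence super-polynomially small in $N$. The only terms near the threshold are $4\mu_k(stiy)$ and $\mu_k(2stiy)$, which have bounded coefficients. For these, $(sty)^{-1}e^{-2\pi/(2sty)}\ll \log N\cdot N^{-1/2-\epsilon}=o(N^{-1/2})$, using the given bound $2sty\le 2\pi/((\tfrac12+\epsilon)\log N)$. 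I would then check that the resulting $\delta_1$, $\delta_2$ and the errors in $\log H$ all stay $O(N^{-1/2})$, and that $N\,y_0^2\delta_1^2$ is negligible.
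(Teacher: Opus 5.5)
Your proposal is correct and follows essentially the paper's route: expand every $\eta(ciy)$, $\mu_1(ciy)$, $\mu_2(ciy)$ by the small-argument (modular) formulas, use the cancellation of the linear terms to get $b_1\approx-\frac{(s-1)(t-1)}{8}$, $b_2\approx\frac{(s-1)(t-1)}{4}$ and $\log H(iy,s,t)\approx\frac{\pi(s-1)(t-1)}{8sty}$, solve the saddle-point equation for $y\approx y_0$, and insert everything into Theorem~\ref{thm-1.1}$(ii)$, with the hypothesis on $2st$ serving exactly to make the $c=2st$ exponential $e^{-\pi/(sty)}$ at most $N^{-1/2-\epsilon}$. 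The only difference is cosmetic: you control the passage from $y$ to $y_0$ in the exponent by stationarity, whereas the paper simply records $y=y_0+O(N^{-3/2})$ and substitutes directly; note that your second-order cost is really $O(Ny_0\eta^2)$ with $\eta$ the relative error in $y$, not $O(Ny_0^2\delta_1^2)$, though this slip is harmless.
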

	The next theorem follows by applying Theorem~\ref{thm-1.1} under the condition $ty\geq 1$.
	\begin{theorem}\label{thm-1.3}
		Let $s > t > 4\sqrt{N}$ be coprime integers, and let $1 < \delta_1, \delta_2, \delta_3, \delta_4 < 1.01$. Then
		\begin{align*}
			\bar{p}_{s,t}(N)=&\bar{p}(N)E_3(N,s,t)\left(1+O\left(N^{-\frac{1}{2}}\right)\right),
		\end{align*}
		where
		\begin{align*}
			E_3(N,s,t)=&\exp\left(\delta_1\exp\left(-\frac{\pi s}{\sqrt{N}}\left(1+O\left(N^{-\frac{1}{2}}\right)\right)\right)-2\delta_2\exp\left(-\frac{\pi s}{2\sqrt{N}}\left(1+O\left(N^{-\frac{1}{2}}\right)\right)\right)\right)\\
			\notag
			\times & \exp\left(\delta_3\exp\left(-\frac{\pi t}{\sqrt{N}}\left(1+O\left(N^{-\frac{1}{2}}\right)\right)\right)-2\delta_4\exp\left(-\frac{\pi t}{2\sqrt{N}}\left(1+O\left(N^{-\frac{1}{2}}\right)\right)\right)\right).   
		\end{align*}
		
	\end{theorem}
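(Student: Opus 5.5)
Since $s>t>4\sqrt{N}$, we will compare the saddle point $y$ of Theorem~\ref{thm-1.1}$(i)$ with the saddle point $y_0$ of the ordinary overpartition problem, and then read off $\bar p_{s,t}(N)$ from Theorem~\ref{thm-1.1}$(ii)$.

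\textbf{Step 1: the modular side of the generating function.} We write
\[
H(iy,s,t)=\frac{\eta(2iy)}{\eta(iy)^2}\cdot G_s(y)\,G_t(y)\cdot\frac{\eta(2stiy)\,\eta(stiy)^{-2}}{1},
\]
where $G_m(y)=\eta(miy)^2/\eta(2miy)=\prod_n(1-e^{-2\pi m n y})/(1+e^{-2\pi mny})$. The overpartition factor is handled by the modular transformation $\eta(-1/z)=\sqrt{z/i}\,\eta(z)$, which gives $\eta(2iy)/\eta(iy)^2=\sqrt{y/2}\,e^{\pi/(8y)}(1+O(e^{-c/y}))$.

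\textbf{Step 2: the $s$- and $t$-factors.} We will show a posteriori that $y\asymp 1/(4\sqrt N)$. Hence $ty\gtrsim 1$, so $e^{-2\pi t y}$ is small but not negligible. We expand
\[
\log G_t(y)=\sum_n\log\frac{1-e^{-2\pi tny}}{1+e^{-2\pi tny}}=-2e^{-2\pi ty}+O(e^{-4\pi ty})\cdots .
\]
The author's form, $\delta_3e^{-\pi t/\sqrt N}-2\delta_4e^{-\pi t/(2\sqrt N)}$ with constants $\delta_i\in(1,1.01)$, indicates the following treatment. The truncated tails $\sum_{k\ge2}$ are bounded by a factor $1+O(e^{-2\pi t y})$, and since $ty\ge1$ this is at most about $1.002$. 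These tails are absorbed into $\delta_i$. The corresponding statements hold for $s$ and for the $st$-factors; the latter are super-exponentially small and are absorbed in the error.

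\textbf{Step 3: locating the saddle point.} For the derivatives $b_1,b_2$ we use the same analysis. The $\eta(iy),\eta(2iy)$ terms give the main part of $-b_1/(2sty^2)$, namely $1/(16y^2)$ up to $O(e^{-c/y})$. The $s,t$-terms contribute $O(t\,e^{-2\pi ty}/y)$, which is relatively $O(y\cdot ty\,e^{-2\pi ty})=O(y)$. Solving \eqref{eq-solmain} then gives
\[
y=\frac{1}{4\sqrt N}\bigl(1+O(N^{-1/2})\bigr).
\]
Substituting this into $e^{-2\pi ty}$ produces $\exp\!\bigl(-\frac{\pi t}{2\sqrt N}(1+O(N^{-1/2}))\bigr)$, and substituting into $e^{-4\pi t y}$ produces the $e^{-\pi t/\sqrt N}$ term. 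This is the origin of the two exponentials in $E_3$; the first comes from the second-order term of $\log(1-x)-\log(1+x)$ combined across $n$.

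\textbf{Step 4: assembly and the main obstacle.} We plug everything into Theorem~\ref{thm-1.1}$(ii)$. The factors $y^{3/2}\sqrt{2st}\,e^{2\pi Ny}$, $\eta$-main terms and $b_2^{-1/2}$ reproduce exactly the saddle-point evaluation of $\bar p(N)=\frac1{8N}e^{\pi\sqrt N}(1+O(N^{-1/2}))$ from \eqref{eq-pN}. The remaining factor is $E_3$, and the error is $O(y)=O(N^{-1/2})$.

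The main obstacle is that the perturbation of $y$ feeds into $e^{2\pi Ny+\pi/(8y)}$. Since this expression is stationary at $y_0$, the shift only costs $O(N\,\Delta y^2/y^3)$. We must verify that this is $O(N^{-1/2})$ using $t>4\sqrt N$; here the bound $te^{-2\pi ty}\ll 1$ uniformly is needed.
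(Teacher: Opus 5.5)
This is essentially the paper's proof. The paper writes $H(iy,s,t)=\sqrt{y/2}\,e^{\pi/(8y)}E_3E_4$ by applying Lemma~\ref{lemma-2.1} and \eqref{eq-etaS} to each $\eta$ separately; that is where the term $+\delta_3e^{-4\pi ty}$ originates, not from the product expansion, since with $x=e^{-2\pi ty}$ the $x^2$-coefficient of $\log\prod_n\frac{1-x^n}{1+x^n}$ is $-2$. It then solves \eqref{eq-solmain} as a perturbed quadratic to get $y=\frac{1}{4\sqrt N}(1+O(N^{-1/2}))$, proves $b_2=\frac{st}{4}(1+O(N^{-1/2}))$, and substitutes into Theorem~\ref{thm-1.1}$(ii)$.

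There are three slips in your Steps 3--4 to fix.
\begin{itemize}
\item The main part of $-b_1/(2sty^2)$ is $\frac{1}{16y^2}-\frac{1}{4\pi y}$, not $\frac{1}{16y^2}+O(e^{-c/y})$; the extra term comes from the $\sqrt{y}$ factor.
\item The $t$-terms contribute $O(te^{-2\pi ty})$ to that quantity, not $O(te^{-2\pi ty}/y)$.
\item The stationarity cost is $O(\Delta y^2/y^3)$, not $O(N\Delta y^2/y^3)$. This is $O(N^{-1/2})$ because $\Delta y=O(1/N)$, which follows from $ty\,e^{-2\pi ty}\ll 1$. The bound $te^{-2\pi ty}\ll 1$ that you say is needed is false for $t$ near $4\sqrt N$, where it is of size $\sqrt N$, and it is not required.
\end{itemize}
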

	The next theorem follows by applying Theorem~\ref{thm-1.1} under the condition $2ty< 1$ and $sy\ge 1$.
	\begin{theorem}\label{thm-1.4}
		Let $s>4\sqrt{N}$ and
		$2t\le \frac{8\pi}{\left(\frac{1}{2}+\epsilon_1\right)\log N}\sqrt{\frac{tN}{t-1}}$ be coprime integers, where $\epsilon_1>0$ is fixed,
		and let $1<\delta_7,\delta_8,\delta_9,\delta_{10}<1.01$. Then
		\begin{align*}
			\bar{p}_{s,t}(N)=\frac{1}{2\sqrt{2t}\,N^{\frac{3}{4}}}\left(1-\frac{1}{t}\right)^{\frac{1}{4}}\exp\left(\pi \sqrt{N\left(1-\frac{1}{t}\right)}\right)E_5(N,s,t)\left(1+O\left(N^{-\frac{1}{2}}\right)\right), 
		\end{align*}
		where 
		\begin{align*}
			&E_5(N,s,t)\\
			&=\exp\left(2\delta_7\exp\left(-\frac{\pi s}{2}\sqrt{\frac{t(t-1)}{N}}\left(1+O\left(N^{-\frac{1}{2}}\right)\right)\right)-\delta_8\exp\left(-\pi s\sqrt{\frac{t(t-1)}{N}}\left(1+O\left(N^{-\frac{1}{2}}\right)\right)\right)\right)\\
			&\times \exp\left(\delta_9\exp\left(-\pi s\sqrt{\frac{(t-1)}{tN}}\left(1+O\left(N^{-\frac{1}{2}}\right)\right)\right)-2\delta_{10}\exp\left(-\frac{\pi s}{2}\sqrt{\frac{(t-1)}{tN}}\left(1+O\left(N^{-\frac{1}{2}}\right)\right)\right)\right).
		\end{align*}
	\end{theorem}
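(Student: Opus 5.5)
We plan to obtain Theorem~\ref{thm-1.4} from Theorem~\ref{thm-1.1} by working in the regime $2ty<1\le sy$, where $y$ is the unique solution of \eqref{eq-solmain}. The basic tool is the modular transformation $\eta(-1/z)=\sqrt{-iz}\,\eta(z)$. We apply it to every eta factor whose argument $c\,iy$ satisfies $cy<1$, namely $\eta(iy)$, $\eta(2iy)$, $\eta(tiy)$ and $\eta(2tiy)$. The factors with $cy\ge1$, namely $\eta(siy)$, $\eta(2siy)$, $\eta(stiy)$ and $\eta(2stiy)$, are left as $q$-products, which are then exponentially close to $1$.

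Differentiating $\log\eta$ gives the following expansions.
\begin{itemize}
\item For $cy<1$: $\mu_1(ciy)=\frac{i}{24}\bigl(c^{2}y^{2}\cdot(\text{const})\bigr)-\frac{i}{24}+\dots$. The dominant term is $\frac{i}{24 c^{2}y^{2}}$-type, with an exponentially small tail of size $e^{-2\pi/(cy)}$.
\item For $cy\ge1$: $\mu_1(ciy)$ equals its constant term plus $O(e^{-2\pi cy})$.
\end{itemize}
Substituting into $b_1$, the leading part of $-b_1/(2sty^2)$ comes from the terms $4st\mu_1(iy)-st\mu_1(2iy)-4s\mu_1(tiy)+s\mu_1(2tiy)$. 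This produces an equation of the form
$$N=\frac{1}{16y^{2}}\Bigl(1-\frac1t\Bigr)+O(1/y)+(\text{exp.\ small}),$$
so that
$$y=\frac14\sqrt{\frac{t-1}{tN}}\,\bigl(1+O(N^{-1/2})\bigr).$$
Two consistency checks follow. First, $sy\ge1$ holds because $s>4\sqrt N$. Second, $2ty<1$ is guaranteed, with room to spare, by the hypothesis on $t$. The hypothesis $2t\le\frac{8\pi}{(1/2+\epsilon_1)\log N}\sqrt{tN/(t-1)}$ in fact says $e^{-\pi/(2ty)}\le N^{-1/2-\epsilon_1}$ roughly. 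This makes the neglected terms $e^{-2\pi/(cy)}$ (for $c\in\{t,2t\}$) into $O(N^{-1/2})$ relative errors, exactly as in the proof of Theorem~\ref{thm-1.2}.

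Next we evaluate $b_2(s,t,y)\sim \frac{1}{8\pi}\cdot(\text{const})(1-1/t)/y^{2}$ in the same way. We then compute $H(iy,s,t)$:
\begin{itemize}
\item The transformed factors give $\exp\bigl(\frac{\pi}{24y}(\dots)\bigr)$ times powers of $y$ and $t$. Combined with $\exp(2\pi Ny)$, this yields $\exp\bigl(\pi\sqrt{N(1-1/t)}\bigr)$ and the prefactor $\frac{1}{2\sqrt{2t}N^{3/4}}(1-1/t)^{1/4}$, after also inserting $y^{3/2}\sqrt{2st}/\sqrt{b_2}$.
\item The untransformed factors give $\prod_n\frac{(1+q^{stn})(1-q^{sn})}{(1-q^{stn})(1+q^{sn})}$ together with the transformed-but-still-present contributions from $\eta(tiy),\eta(2tiy)$ at the $s$-dependent level. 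Taking logarithms, their contribution is dominated by the first terms, $-2e^{-2\pi sy}+\dots$ and the analogous terms at $e^{-\pi s y}$-scale in the transformed variables.
\end{itemize}
Writing $\log\prod(1\pm q^{n})=\pm q+O(q^2)$, each such sum equals its leading exponential times a factor in $(1,1.01)$, since $q\le e^{-2\pi}$ and $sy\ge1$. This is where $\delta_7,\dots,\delta_{10}$ come from. Substituting $y=\frac14\sqrt{(t-1)/(tN)}(1+O(N^{-1/2}))$ into the exponents gives exactly the four exponentials in $E_5$. There, $2\pi s y$ becomes $\frac{\pi s}{2}\sqrt{(t-1)/(tN)}$, and the $t(t-1)$ versions arise from the $s$-factors that pair with transformed $t$-variables, i.e.\ $e^{-2\pi s/(t y')}$-type terms.

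The main obstacle is the bookkeeping of which of the eight eta factors are transformed, and the exact matching of all exponents. In particular, we must track the $O(1/y)$ correction in $N$ so that $2\pi Ny+\frac{\pi}{24y}(\cdots)$ evaluates to $\pi\sqrt{N(1-1/t)}+O(N^{-1/2})$. We must also justify that the error $O(y)$ from Theorem~\ref{thm-1.1} is $O(N^{-1/2})$, which holds since $y\asymp N^{-1/2}$. To handle this we will follow the template of the proof of Theorem~\ref{thm-1.2} term by term, replacing the $st$-scale transformations by untransformed product expansions.
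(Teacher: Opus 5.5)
Your overall strategy matches the paper's proof. You transform $\eta(iy),\eta(2iy),\eta(tiy),\eta(2tiy)$, treat $\eta(siy),\eta(2siy),\eta(stiy),\eta(2stiy)$ by Lemma~\ref{lemma-2.1}, locate the saddle point at $y=\frac14\sqrt{(t-1)/(tN)}\,(1+O(N^{-1/2}))$, and use stationarity of $2\pi Ny+\frac{\pi(t-1)}{8ty}$ to absorb the first-order error in $y$.

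\textbf{Main gap: the value of $b_2$.} Your claim ``$b_2(s,t,y)\sim\frac{1}{8\pi}(\text{const})(1-1/t)/y^{2}$'' is false, and this is exactly the step that fixes the prefactor. For $m\in\{1,2,t,2t\}$ one has $\mu_2(miy)=\frac1{12}-\frac{my}{4\pi}+\sum_{n}\left(\frac{2\pi n}{my}-2\right)\sigma(n)e^{-2\pi n/(my)}$. For $m\in\{s,2s,st,2st\}$ one has $\mu_2(miy)=\sum_n (my)^3(2\pi n)\sigma(n)e^{-2\pi mny}$. In $b_2$ the constants give $\frac{4st-st+s-4s}{12}=\frac{s(t-1)}{4}$, and the terms linear in $y$ cancel, since $-4st+2st-2st+4st=0$. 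The tails are $O(N^{-1/2})$ relative to this. Hence $b_2(s,t,y)=\frac{s(t-1)}{4}(1+O(N^{-1/2}))$, which is bounded in $y$ and proportional to $s$. Equivalently, differentiate $\frac{-b_1}{2sty^2}\approx\frac{t-1}{16ty^2}$ and use $\frac{d}{dy}\left(\frac{-b_1}{2sty^2}\right)=-\frac{b_2}{2sty^3}$.

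With this value, $\sqrt{2st/b_2}=2\sqrt{2t/(t-1)}$ cancels the $s$. Combined with $H(iy,s,t)\approx t^{-1/2}e^{\pi(t-1)/(8ty)}$ and $y^{3/2}=\frac18\left(\frac{t-1}{tN}\right)^{3/4}$, it gives exactly $\frac{1}{2\sqrt{2t}N^{3/4}}(1-\frac1t)^{1/4}$. With your $b_2\asymp y^{-2}$, the prefactor would instead be of order $N^{-5/4}$. So your assertion that inserting $y^{3/2}\sqrt{2st}/\sqrt{b_2}$ yields the stated prefactor contradicts your own evaluation of $b_2$.

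\textbf{Second gap: the origin of $E_5$.} There are no ``$e^{-2\pi s/(ty')}$-type'' or ``$e^{-\pi sy}$-scale'' terms. After transformation, $\eta(tiy)$ and $\eta(2tiy)$ contribute only $e^{-2\pi/(ty)}$ and $e^{-\pi/(ty)}$. These involve no $s$, and they are $O(N^{-1/2})$ by the hypothesis on $t$, which reads $e^{-\pi/(ty)}\le N^{-1/2-\epsilon_1}$ (not $e^{-\pi/(2ty)}$). They belong in a harmless factor $1+O(N^{-1/2})$. The exponentials involving $\sqrt{t(t-1)/N}$ come from the untransformed $\eta(stiy)^{-2}$ and $\eta(2stiy)$. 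At the saddle point, $2\pi sty=\frac{\pi s}{2}\sqrt{t(t-1)/N}\,(1+O(N^{-1/2}))$.

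To obtain the four-term shape of $E_5$, apply Lemma~\ref{lemma-2.1} to each of $\eta(stiy),\eta(2stiy),\eta(siy),\eta(2siy)$ separately. The terms $-\frac{\pi my}{12}$ cancel, leaving $2\delta_7e^{-2\pi sty}-\delta_8e^{-4\pi sty}+\delta_9e^{-4\pi sy}-2\delta_{10}e^{-2\pi sy}$. The first-order expansion of the combined product that you propose yields only $e^{-2\pi sty}$ and $e^{-2\pi sy}$ terms, i.e.\ a factor of a different shape from the stated $E_5$.

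\textbf{Minor point.} The hypothesis $s>4\sqrt N$ gives only $sy>\sqrt{1-1/t}$ at the saddle point, not $sy\ge1$. This is harmless, because Lemma~\ref{lemma-2.1} needs only $sy\ge\sqrt3/2$, and the paper simply assumes $sy\ge1$. But your stated justification is incorrect.
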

	\begin{remark}
		Throughout this paper, we restrict our attention to the three cases considered above; the remaining cases can be handled similarly.
	\end{remark}
	
	If both $s$ and $t$ exceed $N$, then Theorem~\ref{thm-1.3} reduces to
	\eqref{eq-pN}. Similarly, for $s>N$, Theorem~\ref{thm-1.4} implies the following corollary, which gives an asymptotic formula for the number of $t$-regular overpartitions of $N$. This agrees with Corollary~1.2 of \cite{dey}.
	\begin{corollary}
		Let $s$ and $t$ be relatively prime integers with $s>N$, and suppose that $t$ is fixed. Then
		\begin{align*}
			\bar{p}_{s,t}(N)=\frac{1}{2\sqrt{2t}\,N^{\frac{3}{4}}}\left(1-\frac{1}{t}\right)^{\frac{1}{4}}\exp\left(\pi \sqrt{N\left(1-\frac{1}{t}\right)}\right)\left(1+O\left(N^{-\frac{1}{2}}\right)\right).
		\end{align*}  
	\end{corollary}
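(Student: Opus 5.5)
This corollary follows from Theorem~\ref{thm-1.4} by showing that the correction factor $E_5(N,s,t)$ is $1+O(N^{-1/2})$ once $s>N$ and $t$ is fixed. The shortcut that bypasses $E_5$ entirely is combinatorial: when $s>N$, no part of an overpartition of $N$ can be divisible by $s$. So $\bar{p}_{s,t}(N)$ equals the number of $t$-regular overpartitions of $N$. The statement can therefore be read either as a special case of Theorem~\ref{thm-1.4} or as the known formula of \cite{dey}. I will take the route through Theorem~\ref{thm-1.4}, since that is what the paper advertises.

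\textbf{Step 1: hypotheses of Theorem~\ref{thm-1.4}.} For large $N$ we have $N>4\sqrt{N}$, so $s>4\sqrt{N}$. Since $t$ is fixed, the condition
\[
2t\le \frac{8\pi}{\left(\frac12+\epsilon_1\right)\log N}\sqrt{\frac{tN}{t-1}}
\]
holds for all sufficiently large $N$: the right side grows like $\sqrt{N}/\log N$. The finitely many small $N$ are absorbed into the implied constant. Coprimality of $s$ and $t$ is assumed.

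\textbf{Step 2: bounding $E_5$.} Each inner exponential in $E_5$ has the form $\exp\bigl(-c\,s\,\sqrt{\cdot/N}\,(1+O(N^{-1/2}))\bigr)$, with constant $c\in\{\pi/2,\pi\}$ and a fixed positive factor depending only on $t$. The least favourable of these factors is $\sqrt{(t-1)/(tN)}\ge \sqrt{1/(2N)}$. With $s>N$, the argument is therefore at least a positive constant times $\sqrt{N}$. Hence every inner term is $O(e^{-c'\sqrt{N}})$.

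Since the $\delta_i$ are bounded by $1.01$, the total exponent in $E_5$ is $O(e^{-c'\sqrt{N}})$. This gives
\[
E_5=\exp\bigl(O(e^{-c'\sqrt{N}})\bigr)=1+O(e^{-c'\sqrt{N}})=1+O(N^{-1/2}).
\]
Substituting into Theorem~\ref{thm-1.4} yields the claimed formula.

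\textbf{Main obstacle.} The one delicate point is the uniformity of the $O(N^{-1/2})$ factors inside the exponents. I need them to stay bounded, say within $[1/2,2]$ for large $N$, so that the exponents remain of order $s/\sqrt{N}$. This follows directly from how those terms arose in Theorem~\ref{thm-1.4}, and the rest is routine.
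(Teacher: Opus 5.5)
Your proposal is correct and follows the paper's route. The paper only asserts that Theorem~\ref{thm-1.4} with $s>N$ yields the corollary. Your check of that theorem's hypotheses for fixed $t$ and large $N$, and your observation that every inner exponent in $E_5(N,s,t)$ is $\gg s/\sqrt{N}\gg\sqrt{N}$ (so $E_5(N,s,t)=1+O(e^{-c'\sqrt{N}})$), supply exactly the details it omits. One minor point: $t$ should be understood to carry the standing restriction $t\ge 10$ inherited from Theorem~\ref{thm-1.1}.
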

	
	In the following subsection, we discuss log-concavity and higher-order Turán inequalities.
	\subsection{Higher-Order Turán Inequalities for \texorpdfstring{$\bar{p}_{s,t}(N)$}{pbar(s,t)(N)}}
	A real sequence $\{a(N)\}$ is log-concave at $N$ if
	$$
	a(N)^2 \ge a({N-1})a({N+1}), \quad\text{for all }\quad N\ge 1.
	$$ 
	This is equivalent to saying that the
	quadratic polynomial
	\begin{equation*}
		J^{2,N-1}_{a}(V) = a({N-1}) + 2a(N) V + a({N+1}) V^2
	\end{equation*}
	has only real roots. More generally, the Jensen polynomial of
	degree $d$ and shift $N$ is defined by
	\begin{equation*}
		J^{d,N}_{a}(V): = \sum_{j=0}^{d} \binom{d}{j} a({N+j}) V^j.
	\end{equation*}
	The sequence $\{a(N)\}$ satisfies the Tur\'an inequality of order
	$d$ at $N$ if and only if $J^{d,N-1}_{a}(V)$ is hyperbolic (has only real
	roots). Griffin, Ono, Rolen, and Zagier \cite{GORZ19} proved that for
	broad classes of sequences—including the partition function $p(N)$ and
	Fourier coefficients of weakly holomorphic modular forms—the normalized
	Jensen polynomials $J^{d,N-1}_{a}(V)$ converge to the $d$-th Hermite
	polynomial $H_d(V)$ as $N \to \infty$. Consequently,
	$J^{d,N-1}_{a}(V)$ is hyperbolic for all sufficiently large $N$.
	
	We state their general criterion below.
	\begin{theorem}[{\cite[Theorems~3 and~8]{GORZ19}}]\label{thm_GORZ}
		Let $\{a(N)\}, \{L(N)\},$ and $ \{\delta(N)\}$ be sequences of positive real numbers and $\delta(N)\to0$ as $N\to\infty$.  
		For integers $r \geq 0$,  $d \geq 1$, suppose that there exist real numbers $h_3(N), h_4(N), \ldots, h_d(N)$, for which 
		\begin{align*}
			\log \left( \frac{a(N+r)}{a(N)} \right)
			= L(N)r - \delta(N)^2 r^2 + \sum_{j=3}^{d} h_j(N) r^j + o(\delta(N)^d),
		\end{align*}
		as $N\to\infty$, with $h_j(N) = o(\delta(N)^j)$ for each $3 \leq j \leq d$.  
		Then we have  
		\begin{align*}
			\lim_{N \to \infty} \left( \frac{\delta(N)^{-d}}{a(N)}
			J_{a}^{d,N} \!\left( \frac{\delta(N)V - 1}{\exp(L(N))} \right) \right)
			= H_d(V).
		\end{align*}
	\end{theorem}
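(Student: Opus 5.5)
This is the criterion of Griffin, Ono, Rolen and Zagier, and we would simply quote \cite[Theorems~3 and~8]{GORZ19}. Their argument can be reconstructed as follows. The idea is to substitute the hypothesised expansion of $\log(a(N+r)/a(N))$ directly into the Jensen polynomial, expand everything as a polynomial in $\delta=\delta(N)$ and $V$, and see that only the Gaussian term $-\delta^2 r^2$ survives at order $\delta^d$.

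\emph{Step 1: factor out $a(N)$ and the shift.} Put $x=\delta V-1$. The hypothesis, applied with $r=j$ for $0\le j\le d$, gives
\begin{align*}
\frac{1}{a(N)}J_a^{d,N}\!\left(\frac{x}{e^{L(N)}}\right)=\sum_{j=0}^{d}\binom{d}{j}x^{j}\exp\Bigl(-\delta^2 j^2+\sum_{k=3}^{d}h_k(N)j^k+o(\delta^d)\Bigr).
\end{align*}
The $e^{L(N)j}$ factors cancel against the rescaling of $V$. Since $j\le d$ is bounded and $\delta\to0$, the exponent tends to $0$. We therefore Taylor-expand the exponential to a fixed finite order. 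This writes it as $\sum_m c_m(N) j^m$, plus an error $o(\delta^d)$ uniform in $j\le d$.

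\emph{Step 2: evaluate the $j$-sums.} For each $m$,
\begin{align*}
\sum_{j}\binom{d}{j}j^m x^j=\Bigl(x\frac{d}{dx}\Bigr)^m(1+x)^d=\sum_{k\le m}S(m,k)\,d(d-1)\cdots(d-k+1)\,x^k(1+x)^{d-k},
\end{align*}
where $S(m,k)$ are Stirling numbers of the second kind. Here $1+x=\delta V$, so the $k$-th term has size $\delta^{d-k}$.

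\emph{Step 3: power counting.} A monomial in $c_m$ built from $\ell$ copies of $-\delta^2 j^2$ and copies of $h_{k_i}j^{k_i}$ has size $\delta^{2\ell}\prod o(\delta^{k_i})$ and degree $m=2\ell+\sum k_i$. Pairing it with $k\le m$ in Step 2 gives total size $\delta^{d+(m-k)}$ times an $o(1)$ factor whenever some $h$ is present. Hence after multiplying by $\delta^{-d}$:
\begin{itemize}
\item every term involving an $h_k$ is $o(1)$;
\item every term with $k<m$ is $O(\delta)$;
\item the error $o(\delta^d)$ times the bounded sum $\sum\binom dj|x|^j$ is $o(\delta^d)$.
\end{itemize}
The surviving terms are $m=k=2\ell$ coming from $(-\delta^2j^2)^\ell/\ell!$. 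Using $S(2\ell,2\ell)=1$ and $x^{2\ell}\to1$, they contribute
\begin{align*}
\sum_{\ell\le d/2}\frac{(-1)^\ell}{\ell!}\frac{d!}{(d-2\ell)!}V^{d-2\ell}.
\end{align*}
This is the Hermite polynomial $H_d(V)$ in the normalisation of \cite{GORZ19}, with generating function $e^{-t^2+Vt}$.

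The main obstacle is the bookkeeping in Step 3. One must check that products of $h_k$ terms with each other, and with powers of $\delta^2j^2$, never beat order $\delta^d$. This holds because each $j$ carries at least one factor of size $\delta$ (or $o(\delta)$), while Step 2 costs at most one power $\delta^{-1}$ per $j$.
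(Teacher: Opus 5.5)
Your proposal is correct and takes the same approach as the paper, which gives no argument of its own and simply imports the criterion from \cite[Theorems~3 and~8]{GORZ19}. Your reconstruction is also sound. Expand $\exp(-\delta^2j^2+\sum_k h_kj^k)$ to a fixed order $M$ with $2M+2>d$; this is legitimate because the exponent is $O(\delta^2)$, not merely $o(1)$, which is the fact your truncation step needs. Then evaluate $\sum_j\binom dj j^m x^j=(x\,\tfrac{d}{dx})^m(1+x)^d$ with $1+x=\delta V$ and count powers of $\delta$. This leaves exactly $\sum_{\ell\le d/2}\frac{(-1)^\ell d!}{\ell!(d-2\ell)!}V^{d-2\ell}$, which is $H_d(V)$ in the normalization $e^{-t^2+Vt}=\sum_d H_d(V)t^d/d!$.
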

	The Hermite polynomial $H_d(V)$ has simple real zeros. Since an affine
	change of variables preserves this property, the above theorem shows that
	the associated Jensen polynomials have distinct real zeros when $N$ is
	sufficiently large. Hence, the corresponding higher-order Tur\'{a}n
	inequalities hold for all sufficiently large $N$.
	Log-concavity and higher-order Tur\'{a}n inequalities have been studied
	for several partition functions; see, for example,
	\cite{agarwal,chen, desalvo, dey, Dimitrov, larson, ono,pandey}.
	In this direction, we establish the following result for the partition
	function $\bar{p}_{s,t}(N)$, which gives higher-order Tur\'{a}n inequalities for
	all sufficiently large $N$.
	\begin{theorem}\label{thm-turan}
		Let $s > t \ge 10$ be coprime integers. For each positive integer $d$, the Jensen polynomial $J^{d,N}_{\bar{p}_{s,t}(N)}(V)$ is hyperbolic for all sufficiently large $N$.
	\end{theorem}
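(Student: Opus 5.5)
The plan is to apply Theorem~\ref{thm_GORZ} with $a(N)=\bar{p}_{s,t}(N)$, using the explicit asymptotic of Theorem~\ref{thm-1.2}. Since $s,t$ are fixed, the hypothesis $2st \le \frac{2\pi}{(\frac12+\epsilon)\log N}\sqrt{\frac{16stN}{(s-1)(t-1)}}$ holds for all large $N$.

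Set $c=\pi\sqrt{(s-1)(t-1)/(st)}$ and $K=\frac{1}{2\sqrt2}\bigl(\frac{(s-1)(t-1)}{st}\bigr)^{1/4}$. Theorem~\ref{thm-1.2} then gives
\[
\bar{p}_{s,t}(N)=K N^{-3/4}e^{c\sqrt N}\bigl(1+O(N^{-1/2})\bigr).
\]
The error factor $1+O(N^{-1/2})$ is too crude to feed directly into Theorem~\ref{thm_GORZ}, which needs $o(\delta(N)^d)$ with $\delta(N)\asymp N^{-3/4}$. So the first and main task is to sharpen the error term to an asymptotic expansion
\[
\bar{p}_{s,t}(N)=K N^{-3/4}e^{c\sqrt N}\Bigl(\sum_{m=0}^{M}\frac{c_m}{N^{m/2}}+O\bigl(N^{-(M+1)/2}\bigr)\Bigr)
\]
for every $M$, with $c_0=1$. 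I would get this by re-running the saddle-point argument of Theorem~\ref{thm-1.1} for fixed $s,t$. The modular transformation of $\eta$ gives $H(iy,s,t)$ up to exponentially small errors $O(e^{-\beta/y})$ as an explicit $y^{0}$-type constant times $\exp(\pi(s-1)(t-1)/(8sty))$. The saddle equation then becomes explicit up to exponentially small terms, and the Gaussian integral can be expanded to arbitrary order in $y\asymp N^{-1/2}$. The alternative is the standard route for eta-quotients of this shape: $H$ is essentially a weakly holomorphic modular form of weight $0$, so the results of Griffin--Ono--Rolen--Zagier (their Theorem~3 applications) or a Bessel-function expansion give such an asymptotic directly. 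I expect this upgrade of the error term to be the main obstacle, and I would try the Bessel-type expansion first: the leading term is $I$-Bessel-like, $N^{-3/4}e^{c\sqrt N}$, and its full asymptotic series is standard.

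Granting the expansion, for fixed $r$ I write
\[
\log\frac{a(N+r)}{a(N)}=c\bigl(\sqrt{N+r}-\sqrt N\bigr)-\tfrac34\log\Bigl(1+\frac rN\Bigr)+\log\frac{S(N+r)}{S(N)},
\]
where $S$ denotes the series. Taylor expanding in $r/N$:
\begin{itemize}
\item The first term contributes $\frac{cr}{2\sqrt N}-\frac{cr^2}{8N^{3/2}}+\sum_{j\ge3}\gamma_j c\,r^jN^{1/2-j}$.
\item The logarithmic term contributes $O(r^j N^{-j})$ in each degree.
\item The $S$-ratio contributes $O(rN^{-3/2})$ terms, again expandable as powers of $r$ with coefficients $O(N^{-1/2-j})$.
\end{itemize}
Hence I take $L(N)$ to be the full coefficient of $r$, and $\delta(N)^2=\frac{c}{8N^{3/2}}+O(N^{-2})$, so $\delta(N)\asymp N^{-3/4}\to0$. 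Then $h_j(N)=O(N^{1/2-j})=o(N^{-3j/4})=o(\delta(N)^j)$ for $j\ge3$, since $\tfrac12-j<-\tfrac{3j}{4}$ exactly when $j>2$. Truncating the Taylor expansions at order $d$, the remainder is $O(N^{-1/2-d})=o(\delta(N)^d)$, provided the expansion of $S$ was carried far enough, i.e.\ $M$ large in terms of $d$.

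Theorem~\ref{thm_GORZ} then shows that the renormalized Jensen polynomials converge to $H_d(V)$. Since $H_d$ has simple real roots, and hyperbolicity is preserved under an affine change of variables and (by Hurwitz's theorem) under small perturbation, $J^{d,N}_{\bar p_{s,t}}(V)$ is hyperbolic for all sufficiently large $N$.
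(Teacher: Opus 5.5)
Your route is the paper's. Its proof also inserts the leading asymptotic of Theorem~\ref{thm-1.2} into Theorem~\ref{thm_GORZ}, expands $c(\sqrt{N+r}-\sqrt N)-\frac34\log(1+\frac rN)$ in powers of $r$, and reads off $L(N)$, $\delta(N)^2=\frac{c}{8}N^{-3/2}-\frac38N^{-2}$ and $h_j(N)$ as you do.

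The one substantive difference is the point you raise, and you are right to raise it. The paper writes $\bar p_{s,t}(N)\sim KN^{-3/4}e^{c\sqrt N}$, takes logarithms of the ratio, and then treats the resulting $\sim$-relation as an identity. In doing so it silently discards the factor $1+O(N^{-1/2})$. That factor can move $\log\frac{\bar p_{s,t}(N+r)}{\bar p_{s,t}(N)}$ by an amount of order $N^{-1/2}$ that need not be polynomial in $r$. Theorem~\ref{thm_GORZ}, however, requires an error $o(\delta(N)^d)=o(N^{-3d/4})$. Already for $d=2$, the uncontrolled second difference of $\log(1+O(N^{-1/2}))$ dwarfs $\delta(N)^2\asymp N^{-3/2}$.

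So the paper's proof does not actually verify the hypothesis of Theorem~\ref{thm_GORZ}. Your expansion in powers of $N^{-1/2}$, up to an order $M$ with $M+1>\frac{3d}{2}$, is precisely the missing ingredient. Your subsequent bookkeeping is correct: $h_j=O(N^{1/2-j})=o(\delta(N)^j)$ exactly when $j\ge 3$, the Taylor remainder is $O(N^{-1/2-d})$, and Hurwitz applies at the end.

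What you still owe is the expansion itself. I would do it directly rather than by citing a modular-form theorem; if you do cite one, check that it covers this level-$2st$ eta-quotient and not only $\mathrm{SL}_2(\mathbb Z)$. For fixed $s,t$, the transformation law of $\eta$ gives, uniformly for $|x|<y/3$,
\[
H(z,s,t)=\exp\Bigl(\frac{\pi i A}{z}\Bigr)\bigl(1+O(e^{-\beta/y})\bigr),\qquad A=\frac{(s-1)(t-1)}{8st},
\]
with some $\beta=\beta(s,t)>0$. Here the weight is $0$ and the constant in front is exactly $1$. With this explicit phase, Laplace's method on the major arc yields a complete expansion in powers of $y\asymp N^{-1/2}$. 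It is the asymptotic series of $2\pi\sqrt{A/(2N)}\,I_1(c\sqrt N)$, whose leading term reproduces $K$.

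You also need the minor arc $\frac y3\le |x|\le\frac12$ to be smaller than the main term by more than any power of $N$. Lemma~\ref{pro-error} supplies this only for $t\ge50$: in the relevant case $2sty<1$, its exponent $-\frac{1}{120}+\frac{1}{6s}+\frac{1}{6t}$ is positive when $t=10$. Theorem~\ref{thm-1.2} itself depends on that lemma. So for $10\le t<50$ you need your own minor-arc estimate, e.g.\ by checking that near every rational $h/k$ with $k\ge2$ the quotient $H$ grows at a strictly smaller exponential rate than near $0$.
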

	We now sketch the proof of the general asymptotic formula for $\bar{p}_{s,t}(N)$.
	\subsection{\texorpdfstring{Sketch of the proof of Theorem \ref{thm-1.1}}{}}
	Our saddle-point analysis relies on the integral in \eqref{eq-integral}. Since the right-hand side of \eqref{eq-integral} depends solely on $\Im z = y$, while the left-hand side is independent of $y$, we can choose $y$ optimally to derive the asymptotic formula for $\bar{p}_{s,t}(N)$. The Taylor expansion of $H(z,s,t)$ converges in the region $|x|<y$, so we split the integral in (\ref{eq-integral}) into the following two ranges:
	\begin{align*}
		|x|<\frac{y}{3}
		\qquad \text{and} \qquad
		\frac{y}{3}\le |x|\le \frac{1}{2}.
	\end{align*}
	Accordingly, equation (\ref{eq-integral}) can be rewrite as
	\begin{equation}\label{eq-qNt}
		\begin{aligned}
			\bar{p}_{s,t}(N)
			&=\exp(2\pi Ny)H(iy,s,t)\int_{-y/3}^{y/3}\exp\left(-2\pi i Nx+2\pi i\frac{1}{2\pi i}\log\frac{H(z,s,t)}{H(iy,s,t)}\right)dx\\
			&+\exp(2\pi Ny)H(iy,s,t)\int_{\frac{y}{3}\le|x|\le \frac{1}{2}}\exp\left(-2\pi i Nx\right)\frac{H(z,s,t)}{H(iy,s,t)}dx.
		\end{aligned}
	\end{equation}
	The Taylor expansion of $\log H(z,s,t)$ (see (\ref{eq-Hzst})) near $x=0$ is given by
	\begin{align*}
		\log H(z,s,t)&=\log H(iy,s,t)+x\frac{d}{dz} \log H(iy,s,t)+\frac{x^{2}}{2!}\frac{d^{2}}{dz^{2}}\log H(iy,s,t)+\cdots.
	\end{align*}
	From the definition of $\mu_k$, we have
	\begin{align*}
		\left(\frac{d}{dz}\right)^{k}\log H(z,s,t)
		&=\frac{2\pi i}{2stz^{k+1}}( 4st\mu_k(z)+4\mu_k(stz)+t\mu_k(2sz)+s\mu_k(2tz)\\
		& -st\mu_{k}(2z)-\mu_k(2stz)-4t\mu_k(sz)-4s\mu_k(tz)). 
	\end{align*}
	Therefore,
	\begin{equation}\label{eq-tay1}
		\begin{aligned}
			\frac{1}{2\pi i}\log\frac{H(z,s,t)}{H(iy,s,t)} 
			&=\sum_{k=1}^{\infty}\frac{x^{k}}{k!}\frac{b_k(s,t,y)}{2st(iy)^{k+1}}.
		\end{aligned}
	\end{equation}
	The Taylor expansion above is used to estimate the integral in
	\eqref{eq-qNt} over the range $|x|<\frac{y}{3}$. For the complementary range
	$\frac{y}{3}\le |x|\le \frac{1}{2}$, where the expansion is not valid, we establish an
	upper bound in Lemma~\ref{pro-error}. In Proposition~\ref{thm-main},
	we show that the contribution from the first region yields the main
	term, while the contribution from the second region is absorbed into
	the error term.
	
	The paper is organized as follows. Section~\ref{sec-3} contains preliminary results and auxiliary estimates. It also includes the proof of Lemma~\ref{pro-error}. In Section~\ref{sec-4}, we prove Proposition~\ref{thm-main}, establish Theorem~\ref{thm-1.1} via the saddle-point method, and then deduce Theorems~\ref{thm-1.2}, \ref{thm-1.3}, \ref{thm-1.4}, and \ref{thm-turan}.
	
	\section{Acknowledgments}
	The author thanks the University Grants Commission (UGC), India, for support through the Fellowship Programme. The author is also supported by the ARG-MATRICS Grant (Grant No. ANRF/ARGM/2025/002540/MTR) of Kamalakshya Mahatab.
	
	\section{Preliminaries}\label{sec-3}
	This section presents the auxiliary results required for our main proofs. We review several known facts from \cite{tyler} and related work. We then adapt existing bounds and prove several new results.
	
	The following lemma is useful for the case $y\geq 1$.
	\begin{lemma}[{\cite[Lemma 3.2]{tyler}}]\label{lemma-2.0}
		For every $y\ge \frac{\sqrt{3}}{2}$, there exists a constant $\rho$ satisfying $|\rho|<1$ such that
		\begin{align*}
			|\eta(z)|=\exp\left(-\frac{\pi y}{12}+\rho 1.01e^{-2\pi y}\right).
		\end{align*}
	\end{lemma}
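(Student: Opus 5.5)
We need to prove Lemma 3.2 of Tyler: for $y\ge \sqrt3/2$, $|\eta(z)|=\exp(-\pi y/12+\rho\,1.01e^{-2\pi y})$ with $|\rho|<1$.

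The plan is to take logarithms directly in the product definition of $\eta$. Writing $q=e^{2\pi i z}$ with $|q|=e^{-2\pi y}$, we have
\begin{align*}
\log|\eta(z)| = -\frac{\pi y}{12} + \sum_{n=1}^{\infty}\log\left|1-q^{n}\right|,
\end{align*}
since $|\exp(\pi i z/12)|=\exp(-\pi y/12)$. So it suffices to show that
\begin{align*}
\Bigl|\sum_{n\ge1}\log|1-q^n|\Bigr| < 1.01\,e^{-2\pi y}
\end{align*}
whenever $y\ge\sqrt3/2$. Then one sets $\rho$ equal to this sum divided by $1.01e^{-2\pi y}$.

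To bound the sum, note that $\log|1-w| = \operatorname{Re}\log(1-w)$ for $|w|<1$, using the principal branch. Hence
\begin{align*}
\bigl|\log|1-w|\bigr|\le |\log(1-w)| \le \sum_{m\ge1}\frac{|w|^m}{m} = -\log(1-|w|).
\end{align*}
Put $r=e^{-2\pi y}$. Then
\begin{align*}
\Bigl|\sum_{n\ge1}\log|1-q^n|\Bigr| \le -\sum_{n\ge1}\log(1-r^n) = \sum_{n\ge1}\sum_{m\ge1}\frac{r^{nm}}{m} = \sum_{k\ge1}\sigma_{-1}(k)\,r^{k},
\end{align*}
where $\sigma_{-1}(k)=\sum_{d\mid k} 1/d$.

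Factoring out the leading term gives a bound $r\,(1+\tfrac32 r+\tfrac43 r^2+\cdots)$. Since $\sigma_{-1}(k)\le 1+\log k \le k$, the tail is at most
\begin{align*}
r\sum_{k\ge2}k\,r^{k-1} = r\left(\frac{1}{(1-r)^2}-1\right).
\end{align*}

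For $y\ge\sqrt3/2$ we have $r\le e^{-\pi\sqrt3}\approx 4.3\times10^{-3}$, so $(1-r)^{-2}-1\approx 8.7\times10^{-3}<0.01$. The total is therefore strictly below $1.01\,r$, which is the required bound.

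The only delicate point is this numerical check that the constant comes out at most $1.01$. A cruder bound such as $\sum_n r^n/(1-r^n)$ works equally well, giving $r/(1-r)^2$ by the same estimate. The threshold $\sqrt3/2$ supplies plenty of margin, so strict inequality $|\rho|<1$ holds.
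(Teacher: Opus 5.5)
Your proof is correct and complete. The chain
$\bigl|\sum_{n\ge1}\log|1-q^n|\bigr|\le\sum_{k\ge1}\sigma_{-1}(k)r^k\le r/(1-r)^2$, with $r=e^{-2\pi y}\le e^{-\pi\sqrt3}$, gives $|\rho|\le(1-e^{-\pi\sqrt3})^{-2}/1.01\approx 0.9987<1$.

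The paper gives no proof of this lemma; it simply quotes Tyler's Lemma~3.2. Your argument is the standard one: expand $\log\prod(1-q^n)$ and bound the resulting series by a geometric tail. Note that $(1-e^{-\pi\sqrt3})^{-2}\approx 1.00872$ is exactly the constant $1.00873$ in Lemma~\ref{lemma-2.1}, which suggests the cited sources use the same estimate.
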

	We require the following estimate for $\eta(iy)$ when $y\ge 1$.
	\begin{lemma}[{\cite[Lemma 3.3]{barman1}}]\label{lemma-2.1}
		For each $y\ge \frac{\sqrt{3}}{2}$, there exists a constant $\delta$ with $1<\delta<1.00873$ such that
		\begin{equation*}
			\eta(iy) = \exp\left(-\frac{\pi y}{12} - \delta e^{-2\pi y}\right).
		\end{equation*}
	\end{lemma}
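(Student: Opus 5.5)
Since $q=e^{-2\pi y}\in(0,1)$, every factor $1-q^{n}$ in the product defining $\eta(iy)$ is a positive real number. So $\eta(iy)>0$, and we may take real logarithms throughout. The plan is to write $\log\eta(iy)+\frac{\pi y}{12}$ as $-q$ times an explicit power series in $q$ with positive coefficients and constant term $1$. That power series is the constant $\delta$ we want, and we then bound it using $q\le e^{-\pi\sqrt3}$.

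\emph{Step 1 (exact expansion).} From the product definition,
\begin{align*}
\log\eta(iy)=-\frac{\pi y}{12}+\sum_{n\ge1}\log(1-q^{n})
=-\frac{\pi y}{12}-\sum_{n\ge1}\sum_{m\ge1}\frac{q^{nm}}{m}.
\end{align*}
Collect terms by $M=nm$. For fixed $M$ the coefficient is $\sum_{m\mid M}\frac1m=\frac{\sigma(M)}{M}$, where $\sigma$ is the sum-of-divisors function. Rearranging is legitimate because all terms are positive and the double series converges absolutely. Defining
\begin{align*}
\delta:=\sum_{M\ge1}\frac{\sigma(M)}{M}\,q^{M-1},
\end{align*}
we obtain exactly $\eta(iy)=\exp\bigl(-\frac{\pi y}{12}-\delta e^{-2\pi y}\bigr)$.

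\emph{Step 2 (lower bound).} The $M=1$ term of the series for $\delta$ equals $1$. Every other term is strictly positive, so $\delta>1$.

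\emph{Step 3 (upper bound).} Each coefficient $\sigma(M)/M$ is independent of $y$ and each power $q^{M-1}$ is nonincreasing in $y$. Hence $\delta$ is nonincreasing in $y$, and it suffices to bound it at $y=\frac{\sqrt3}{2}$, where $q=q_0:=e^{-\pi\sqrt3}\approx 0.004333$.

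For the coefficients, use the crude bound $\frac{\sigma(M)}{M}=\sum_{d\mid M}\frac1d\le M$. This gives
\begin{align*}
\delta\le\sum_{M\ge1}Mq_0^{M-1}=\frac{1}{(1-q_0)^{2}}\approx 1.00872<1.00873.
\end{align*}

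\emph{Main obstacle.} The only delicate point is the numerics, because the stated constant $1.00873$ is very close to $(1-q_0)^{-2}$. To gain comfortable slack, I would keep the exact coefficient $\frac{3}{2}$ at $M=2$ in place of the crude value $2$, and apply the bound $\frac{\sigma(M)}{M}\le M$ only for $M\ge3$. This yields
\begin{align*}
\delta\le (1-q_0)^{-2}-\tfrac12 q_0\approx1.00655,
\end{align*}
which is clearly below $1.00873$ once the value of $e^{-\pi\sqrt3}$ is certified to a few decimal places.
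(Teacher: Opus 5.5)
Your proof is correct and complete. The paper gives no argument for this lemma; it simply imports it from \cite[Lemma~3.3]{barman1}. So there is no in-paper proof to match, and your write-up would serve as a self-contained replacement.

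The constant $1.00873$ is essentially $(1-e^{-\pi\sqrt3})^{-2}\approx 1.008723$. That is exactly what the standard termwise argument produces:
\[
q^{n}<-\log(1-q^{n})\le \frac{q^{n}}{1-q^{n}}\le \frac{q^{n}}{1-q}.
\]
Summing over $n\ge 1$ gives $q<-\sum_{n}\log(1-q^{n})\le q(1-q)^{-2}$, hence $1<\delta\le (1-q)^{-2}\le (1-q_0)^{-2}$. That route needs no divisor sums, and it only needs the bound (not $\delta$ itself) to be monotone in $y$.

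Your expansion through $\sigma(M)/M$ lands on the same crude bound, since $\sum_{M}Mq^{M-1}=(1-q)^{-2}$. Because it gives an exact series for $\delta$, it also lets you keep the true coefficient $\tfrac32$ at $M=2$, and that is a genuine gain:
\begin{itemize}
\item With the crude bound the margin is tiny. You need $q_0<0.0043366$, i.e. $\pi\sqrt3$ correct to roughly three decimals.
\item With your refinement, $(1-q_0)^{-2}-\tfrac12 q_0$ is increasing in $q_0$ and stays below $1.00873$ for all $q_0\le 0.005$. So it suffices to check $\pi\sqrt3>5.3>\log 200$, which needs only $\pi>3.14$ and $\sqrt3>1.73$.
\end{itemize}
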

	The next lemma is useful for small $y > 0$.
	\begin{lemma}[{\cite[Lemma 3.4]{barman1}}]\label{lemma-2.2}
		Corresponding to each $y\in(0,1)$, there exists a constant $\nu\in(1,1.00873)$ such that
		\begin{equation*}
			\eta(iy) = y^{-\frac{1}{2}} \exp\left( -\frac{\pi}{12y} - \nu e^{-\frac{2\pi}{y}} \right).
		\end{equation*}
	\end{lemma}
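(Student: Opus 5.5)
This lemma follows from Lemma~\ref{lemma-2.1} via the modular transformation $\eta(-1/z)=\sqrt{-iz}\,\eta(z)$. On the imaginary axis we take $z=i/y$ with $y\in(0,1)$. Then $-1/z = iy$ and $\sqrt{-iz}=\sqrt{1/y}=y^{-1/2}$, where the principal branch is real and positive here. This gives
\begin{align*}
	\eta(iy)=y^{-\frac12}\,\eta\!\left(\frac{i}{y}\right).
\end{align*}
Since $y<1$, we have $1/y>1\ge \frac{\sqrt3}{2}$, so Lemma~\ref{lemma-2.1} applies at the point $i/y$. It yields a constant $\delta\in(1,1.00873)$ with
\begin{align*}
	\eta\!\left(\frac{i}{y}\right)=\exp\left(-\frac{\pi}{12y}-\delta e^{-\frac{2\pi}{y}}\right).
\end{align*}
Setting $\nu=\delta$ and substituting gives exactly the claimed formula.

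The only point needing care is the transformation law itself, in particular the choice of branch of the square root. If I wanted to avoid citing it, I would derive it from the Poisson summation/theta-function identity, or from the standard Dedekind proof via $\log\eta$ and the Eisenstein series $E_2$. On the positive imaginary axis every quantity is a positive real number, so no branch ambiguity arises. I expect no genuine obstacle: the lemma is simply the inversion $y\mapsto 1/y$ applied to the large-$y$ estimate, and the interval for $\nu$ is inherited unchanged from the interval for $\delta$ in Lemma~\ref{lemma-2.1}.
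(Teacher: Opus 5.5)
Your proof is correct: the inversion $\eta(-1/z)=\sqrt{-iz}\,\eta(z)$ at $z=i/y$ gives $\eta(iy)=y^{-1/2}\eta(i/y)$, and since $1/y>1\ge \frac{\sqrt{3}}{2}$, Lemma~\ref{lemma-2.1} applies at $i/y$ and yields the claim with $\nu=\delta\in(1,1.00873)$. The paper gives no proof of its own and simply imports the statement from \cite[Lemma 3.4]{barman1}. Your reduction of the small-$y$ estimate to the large-$y$ estimate (Lemma~\ref{lemma-2.1}) via modular inversion is the standard route, and the way the paper pairs Lemmas~\ref{lemma-2.1} and~\ref{lemma-2.2} reflects it.
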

	We now derive an estimate for the integral in \eqref{eq-qNt} over the range $\frac{y}{3} \le |x| \le \frac{1}{2}$.
	\begin{lemma}\label{pro-error}
		If $y\le\frac{1}{1000}$ and $s>t\ge 50$ are relatively prime integers, then  
		\begin{align*}
			\int_{\frac{y}{3}\le |x|\le \frac{1}{2}}\left|\frac{H(z,s,t)}{H(iy,s,t)}\right|dx\le 1.5\exp\left(-\frac{\pi }{600y}\right).
		\end{align*}
	\end{lemma}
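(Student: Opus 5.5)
We bound $|H(z,s,t)/H(iy,s,t)|$ pointwise on $\frac{y}{3}\le |x|\le \frac12$ by splitting it into three factors, and then integrate over an interval of length at most $1$. The generating function \eqref{eq-Hzst} gives
\begin{align*}
	H(z,s,t)=\frac{\eta(2z)}{\eta(z)^{2}}\cdot\frac{\eta(sz)^{2}}{\eta(2sz)}\cdot\frac{\eta(tz)^{2}}{\eta(2tz)}\cdot\frac{\eta(2stz)}{\eta(stz)^{2}} .
\end{align*}
The first factor is the overpartition generating function $\prod_n \frac{1+q^n}{1-q^n}$. The remaining three are of the form $\prod_n \frac{1-q^{mn}}{1+q^{mn}}$ with $m\in\{s,t\}$, together with the reciprocal type for $m=st$.

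The plan has three steps.

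\textbf{Step 1: the overpartition factor.} Write
\begin{align*}
	\log\left|\frac{\bar P(q)}{\bar P(|q|)}\right| = -2\sum_{n\ \mathrm{odd}}\sum_{k\ge1}\frac{|q|^{nk}}{k}\bigl(1-\cos(2\pi nkx)\bigr),
\end{align*}
where $\bar P(q)=\prod(1+q^n)/(1-q^n)=\exp\bigl(\sum_{n,k\ \mathrm{odd}\ n}2q^{nk}/k\bigr)$. Equivalently, use the odd-part product $\prod_{n\text{ odd}}(1-q^n)^{-2}$. Keep only the terms $k=1$ and compare with the standard Tyler-type estimate, in the spirit of the bounds in \cite{tyler}. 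For $y/3\le|x|\le\frac12$ and $y\le 1/1000$ this yields
\begin{align*}
	\left|\frac{\bar P(q)}{\bar P(|q|)}\right|\le \exp\left(-\frac{c_1}{y}\right).
\end{align*}
Here $c_1$ is an explicit constant, coming from $\sum_{n\ \mathrm{odd}}e^{-2\pi ny}(1-\cos 2\pi nx)\gg 1/y$ when $|x|\ge y/3$. Alternatively, one can use the modular transformation $\eta(-1/z)=\sqrt{z/i}\,\eta(z)$ together with Lemma~\ref{lemma-2.2}.

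\textbf{Step 2: the restriction factors.} For the factors with $m=s,t,st$, use the crude bound
\begin{align*}
	\left|\log\frac{G_m(q)}{G_m(|q|)}\right| \le 4\sum_{n\ \mathrm{odd}}\sum_k \frac{|q|^{mnk}}{k}\le 4\sum_{j\ge1}\frac{e^{-2\pi m y j}}{j}\cdot(\ldots).
\end{align*}
This is of size roughly $\frac{1}{my}$ up to a logarithmic factor, obtained by comparing the sum with $\int$, i.e. with the value $\pi^2/(4\cdot 2\pi m y)$. Because $t\ge 50$, the total contribution from $m=s,t,st$ is at most about $\frac{c_2}{50y}+\frac{c_2}{sy}+\frac{c_2}{sty}$ with $c_2\approx\frac{\pi}{8}\cdot 4$. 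This must be small compared with $c_1/y$. The hypothesis $t\ge50$ (rather than $10$) is exactly what makes this comparison work.

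\textbf{Step 3: combine and integrate.} Combining the two steps gives
\begin{align*}
	\left|\frac{H(z,s,t)}{H(iy,s,t)}\right|\le \exp\left(-\frac{c_1-c_2(1/t+1/s+1/(st))}{y}+O(1)\right)\le 1.5\exp\left(-\frac{\pi}{600y}\right).
\end{align*}
Integrating over a set of measure at most $1$ then gives the claim.

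\textbf{Main obstacle.} The delicate point is Step 1 near the edge $|x|\approx y/3$. There the savings come only from small $n$, and we need an explicit lower bound
\begin{align*}
	\sum_{n\ \mathrm{odd}}e^{-2\pi ny}\bigl(1-\cos(2\pi nx)\bigr)\ge \frac{c}{y},
\end{align*}
with $c$ large enough to dominate the losses from Step 2. For that range I would use the inequality $1-\cos\theta\ge \frac{2\theta^2}{\pi^2}$ for $|\theta|\le\pi$, applied to $n\le 1/(2|x|)$. For larger $|x|$ the saving is even bigger, but I would switch to the transformed eta quotient via $\eta(-1/z)$ to handle $|x|$ of order $1$ cleanly. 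The remaining worry is keeping the constants explicit enough to reach $\pi/600$.
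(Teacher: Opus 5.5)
\textbf{This has a genuine gap: the estimates you propose cannot close the numerical comparison, and in this lemma the constants are the whole content.} Your architecture is the same as the paper's. You write $H=\bar P(q)\,\bar P(q^{st})/\bigl(\bar P(q^{s})\bar P(q^{t})\bigr)$ with $\bar P(q)=\prod_n(1+q^n)/(1-q^n)$, and let a saving from $\bar P(q)$ beat $x$-uniform losses from the $s$- and $t$-factors, using $t\ge 50$.

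First, a correction. In fact $\log\bar P(q)=2\sum_{n\ge1}\sum_{k\ \mathrm{odd}}q^{nk}/k$: the parity condition is on $k$, not $n$. Also $\bar P(q)\ne\prod_{n\ \mathrm{odd}}(1-q^n)^{-2}=\prod_n(1+q^n)^2$, since the coefficients of $q^2$ are $4$ and $3$. In Step 1 this is harmless, because all terms have one sign and you discard terms anyway. In Step 2, the termwise bound $1-\cos\le 2$ applied to the correct expansion costs $2\log\bar P(e^{-2\pi my})\approx \pi/(4my)$ per factor, not your $\pi/(2my)$.

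Now check the budget at $(s,t)=(51,50)$ and $|x|=y/3$.
\begin{itemize}
\item \emph{Maximum saving.} The modular transformation gives $\log|\bar P(q)/\bar P(|q|)|=-\frac{\pi}{8y}\cdot\frac{w^2}{1+w^2}+O(1)$ with $w=x/y$. So the saving at the edge is only about $\pi/(80y)\approx 0.039/y$, and no argument can give $c_1>\pi/80$.
\item \emph{Your losses.} With your $c_2\approx\pi/2$ they come to about $0.062/y$, more than any possible saving. Even with the corrected $c_2=\pi/4$ you would need $c_1>0.031+\pi/600\approx0.036$.
\item \emph{Your saving.} Any $x$-uniform Step 2 bound is at least the true supremum of the $t$-factor, about $\pi/(8ty)\approx0.008/y$, attained near $tx\equiv\frac12$. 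Your edge argument (only $k=1$, only odd $n$, and $1-\cos\theta\ge2\theta^2/\pi^2$) yields only $c_1\approx 2/(9\pi^3)\approx0.007$. Even all $n$ with the exact cosine give just $1/(10\pi)\approx0.032$ from the $k=1$ terms.
\end{itemize}
So Step 3 fails as planned.

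What is missing is a sharp saving exactly near $|x|\approx y/3$, obtained from the modular transformation. That is the range where you planned the $q$-series bound instead of the transformation. You also need losses no larger than about $\pi/(4my)$ per factor.

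The paper handles this as follows:
\begin{itemize}
\item It writes $\eta(2z)/\eta(z)^2=F(z)/\eta(z)$ with $F(z)=q^{1/24}\prod(1+q^n)$, so $|F(z)/F(iy)|\le1$ trivially.
\item It uses the transformation-based bound \eqref{eq-error}, $|\eta(iy)/\eta(z)|\le1.2e^{-\pi/(120y)}$.
\item It bounds each $m$-factor, $m\in\{s,t\}$, by $e^{\pi/(6my)}$, via Lemma~\ref{lemma-2.2} and a uniform upper bound for $|\eta(mz)|$.
\end{itemize}
Then $\frac{\pi}{120}-\frac{\pi}{300}-\frac{\pi}{306}>\frac{\pi}{600}$, which is exactly where $t\ge50$ is used. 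The $st$-factor needs no charge at all, since $|\bar P(q^{st})|\le\bar P(|q|^{st})$.
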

	\begin{proof}
		From the definition of $H(z,s,t)$, it follows that
		\begin{align*}
			\left|\frac{H(z,s,t)}{H(iy,s,t)}\right| =\left|\frac{\eta(iy)^{2}}{\eta(z)^{2}}\right|
			\left|\frac{\eta(2z)}{\eta(2iy)}\right|\left|\frac{\eta(stiy)^{2}}{\eta(stz)^{2}}\right|\left|\frac{\eta(2stz)}{\eta(2stiy)}\right|\left|\frac{\eta(sz)^{2}}{\eta(siy)^{2}}\right|
			\left|\frac{\eta(2siy)}{\eta(2sz)}\right|\left|\frac{\eta(tz)^{2}}{\eta(tiy)^{2}}\right|\left|\frac{\eta(2tiy)}{\eta(2tz)}\right|.
		\end{align*}
		Let $F(z) = \frac{\eta(2z)}{\eta(z)}$. We first consider the quantity
		\begin{align}\label{eq-atzaty}
			|G(z)|= \left|\frac{\eta(iy)^{2}}{\eta(z)^{2}}\right|
			\left|\frac{\eta(2z)}{\eta(2iy)}\right|=\left|\frac{F(z)}{F(iy)}\right|
			\left|\frac{\eta(iy)}{\eta(z)}\right|.
		\end{align}
		From the proof of Lemma~3.2 in \cite{barman2026asymptotic}, we have
		\begin{align}\label{eq-error}
			\left|\frac{\eta(iy)}{\eta(z)}\right|\le 1.2\exp\left(-\frac{\pi}{120y}\right). 
		\end{align}
		Using the definition of $\eta(z)$, we obtain
		\begin{align*}
			F(z)=q^{\frac{1}{24}}\prod_{n=1}^{\infty}(1+q^{n}).  
		\end{align*}
		Let $r_n=\exp(-2\pi ny)$. Then
		\begin{align}\label{eq-rn}
			\left|\frac{F(z)}{F(iy)}\right|=\prod_{n=1}^{\infty}\frac{\left|1+r_n\exp(2\pi inx)\right|}{1+r_n}. 
		\end{align}
		For $0\le\tilde{r}<1$, we have
		\begin{align}\label{eq-r}
			\frac{\left|1+\tilde{r}e^{i\theta}\right|^{2}}{(1+\tilde{r})^{2}}=1-\frac{2\tilde{r}(1-\cos \theta)}{(1+\tilde{r})^{2}}.
		\end{align}
		From (\ref{eq-rn}), we deduce
		\begin{equation} \label{eq-loggg}
			\begin{aligned}
				\log\frac{|F(z)|}{|F(iy)|}
				&=\sum_{n=1}^{\infty}\frac{1}{2}\log \left(\frac{|1+r_n\exp(2\pi inx)|^{2}}{(1+r_n)^{2}}\right)\\
				&=\sum_{n=1}^{\infty}\frac{1}{2}\log\left(1-\frac{2r_n(1-\cos(2\pi  nx))}{(1+r_n)^{2}}\right)\quad(\text{using (\ref{eq-r})})\\
				&\le -\sum_{n=1}^{\infty}\frac{r_n(1-\cos(2\pi  nx))}{(1+r_n)^{2}}\quad(\text{using $\log(1-u)\le -u$})\\
				&\le -\frac{r_{N_{0}}}{4}\sum_{n=1}^{N_0}(1-\cos(2\pi nx)).
			\end{aligned}
		\end{equation}
		Now we choose $N_0=\left\lfloor \frac{1}{y} \right\rfloor$, the term $r_{N_0}=\exp(-2\pi N_0y)\ge \exp(-2\pi)$ is bounded below by an absolute constant. Therefore, it is obvious that 
		\begin{align*}
			\sum_{n=1}^{N_0}(1-\cos(2\pi nx))\ge 0
		\end{align*}
		uniformly for $\frac{y}{3}\le x\le \frac{1}{2}.$
		\newline
		Substituting the above estimate in (\ref{eq-loggg}), we obtain the following:
		\begin{align*}
			\left|\frac{F(z)}{F(iy)}\right|\le    1.
		\end{align*}
		Combining this with (\ref{eq-error}) in (\ref{eq-atzaty}), we get
		\begin{align}\label{eq-Err1}
			\left|G(z)\right|\le 1.2\exp\left(-\frac{\pi}{120y}\right).  
		\end{align}
		\begin{case}
			\textbf{\boldmath $2sty<1$.} 
		\end{case}
		Using a similar approximation to that employed in the estimate of $\left|G(z)\right|$, we obtain 
		\begin{align}\label{eq-Err2}
			\left|\frac{\eta(stiy)^{2}}{\eta(stz)^{2}}\right|\left|\frac{\eta(2stz)}{\eta(2stiy)}\right|\le 1.2 \exp\left(-\frac{\pi}{120sty}\right).
		\end{align}
		Employing an estimate analogous to \eqref{eq-error}, we derive
		\begin{align*}
			\left|\frac{\eta(2siy)}{\eta(2sz)}\right| \le 1.2\exp\left(-\frac{\pi}{240sy}\right)\quad \text{and}\quad  \left|\frac{\eta(2tiy)}{\eta(2tz)}\right|  \le 1.2 \exp\left(-\frac{\pi}{240ty}\right). 
		\end{align*}
		Inserting Lemma~\ref{lemma-2.2} and Lemma~3.3 of \cite{tyler} with $1<\nu_1,\nu_2<1.01$, we obtain
		\begin{align*}
			&\left|\frac{\eta(sz)^{2}}{\eta(siy)^{2}}\right|\le \frac{49}{81}(sy)^{\frac{1}{2}}\exp\left(\frac{\pi}{6sy}+2\nu_1e^{-\frac{2\pi}{sy}}\right)\quad\text{and}\quad\\  &\left|\frac{\eta(tz)^{2}}{\eta(tiy)^{2}}\right|\le \frac{49}{81}(ty)^{\frac{1}{2}}\exp\left(\frac{\pi}{6ty}+2\nu_2e^{-\frac{2\pi}{ty}}\right).  
		\end{align*}
		Applying the above approximations and simplifying for $2sty<1$, we deduce that
		\begin{align}\label{eq-Err3}
			&\left|\frac{\eta(2siy)}{\eta(2sz)}\right| \left|\frac{\eta(sz)^{2}}{\eta(siy)^{2}}\right|\le \exp\left(\frac{\pi}{6sy}\right)\quad\text{and}\\
			\label{eq-Err4}
			&\left|\frac{\eta(2tiy)}{\eta(2tz)}\right|\left|\frac{\eta(tz)^{2}}{\eta(tiy)^{2}}\right|\le \exp\left(\frac{\pi}{6ty}\right).
		\end{align}
		Combining \eqref{eq-Err1}, \eqref{eq-Err2}, \eqref{eq-Err3}, and \eqref{eq-Err4}, we obtain
		\begin{align*}
			\left|\frac{H(z,s,t)}{H(iy,s,t)}\right|&\le 1.5\exp\left(-\frac{\pi }{120y}+\frac{\pi }{6sy}+\frac{\pi}{6ty}\right)\\
			&\le 1.5\exp\left(-\frac{\pi }{600y}\right),\quad\text{as $s>t\ge 50$}.
		\end{align*}
		\begin{case}
			\textbf{\boldmath $ty\ge 1$.} 
		\end{case}
		Substituting Lemma~\ref{lemma-2.0} and simplifying for $ty \ge 1$, we obtain
		\begin{align*}
			\left|\frac{\eta(stiy)^{2}}{\eta(stz)^{2}}\right|\left|\frac{\eta(2stz)}{\eta(2stiy)}\right|\left|\frac{\eta(sz)^{2}}{\eta(siy)^{2}}\right|
			\left|\frac{\eta(2siy)}{\eta(2sz)}\right|\left|\frac{\eta(tz)^{2}}{\eta(tiy)^{2}}\right|\left|\frac{\eta(2tiy)}{\eta(2tz)}\right| \le\exp\left(18.18 e^{-2\pi}\right).  
		\end{align*}
		It follows from \eqref{eq-Err1} and the above estimate that
		\begin{align*}
			\left|\frac{H(z,s,t)}{H(iy,s,t)}\right|\le 1.5\exp\left(-\frac{\pi }{600y}\right). 
		\end{align*}
		\begin{case}
			\textbf{\boldmath $2ty<1$ and $sy\ge 1$.} 
		\end{case}
		A similar application of Lemma~\ref{lemma-2.0} yields
		\begin{align*}
			\left|\frac{\eta(stiy)^{2}}{\eta(stz)^{2}}\right|\left|\frac{\eta(2stz)}{\eta(2stiy)}\right|\left|\frac{\eta(sz)^{2}}{\eta(siy)^{2}}\right|
			\left|\frac{\eta(2siy)}{\eta(2sz)}\right| \le\exp\left(12.12 e^{-2\pi}\right).   
		\end{align*}
		Combining \eqref{eq-Err1} and \eqref{eq-Err4} with the above estimates, we derive
		\begin{align*}
			\left|\frac{H(z,s,t)}{H(iy,s,t)}\right|\le 1.5\exp\left(-\frac{\pi }{600y}\right). 
		\end{align*}
		This concludes the proof.
	\end{proof}
	
	In \cite{tyler}, Tyler derived the following two expansions for $\mu_k(z)$ (see equations (4.14) and (4.16)). These formulas are particularly useful depending on whether $\Im z$ is large\footnote{Throughout this article, we describe the imaginary part as \emph{large} when it belongs to $[1,\infty)$, and as \emph{small} when it lies in $(0,1)$.} or small. Brief proofs of the following two lemmas were presented in \cite[Propositions~3.5 and ~3.6]{barman2}.
	\begin{lemma}\label{eq-(2.1)}
		For a large imaginary part, we use the following formula:
		\begin{align*}
			\notag
			\mu_{k}(z)&=\sum_{n=1}^{\infty}z^{k+1}(2\pi in)^{k-1}\sigma(n)\exp(2\pi inz)- \begin{cases} 
				\frac{z^{2}}{24} & \mbox{if } k=0,1 \\ 
				0 & \mbox{if } k\ge 2 
			\end{cases} \quad\text{and}\\  
			\mu_{k}^{\prime}(z)&=\sum_{n=1}^{\infty}\left((2\pi inz)^{k+1}+(k+1)(2\pi inz)^{k}\right) \frac{\sigma(n)}{2\pi in}\exp(2\pi inz)-\begin{cases} 
				\frac{z}{12} & \mbox{if } k=0,1 \\ 
				0 & \mbox{if } k\ge 2. 
			\end{cases}\\ 
			\notag   
		\end{align*}
	\end{lemma}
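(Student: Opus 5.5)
We will derive both formulas directly from the product expansion of $\eta$ and the definition \eqref{demu(z)}, working with $\log\eta$ as a $q$-series. Writing $q=e^{2\pi i z}$, we have
\begin{align*}
\log\eta(z)=\frac{\pi i z}{12}+\sum_{m=1}^{\infty}\log(1-q^{m})=\frac{2\pi i z}{24}-\sum_{m=1}^{\infty}\sum_{j=1}^{\infty}\frac{q^{mj}}{j}=\frac{2\pi i z}{24}-\sum_{n=1}^{\infty}\frac{\sigma(n)}{n}e^{2\pi i n z},
\end{align*}
after collecting terms with $n=mj$, which uses $\sum_{j\mid n} 1/j=\sigma(n)/n$. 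For $\Im z$ large (indeed for any $\Im z>0$) this series converges absolutely and locally uniformly, since $\sigma(n)\le n^{2}$ and $|e^{2\pi i n z}|=e^{-2\pi n y}$. Hence it may be differentiated termwise any number of times.

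\textbf{First formula.} Differentiating $k$ times gives
\begin{align*}
\left(\frac{d}{dz}\right)^{k}\log\eta(z)=\frac{2\pi i}{24}\,[k=1]+\frac{2\pi i z}{24}\,[k=0]-\sum_{n=1}^{\infty}(2\pi i n)^{k}\frac{\sigma(n)}{n}e^{2\pi i n z}.
\end{align*}
Multiplying by $-z^{k+1}/(2\pi i)$ and using $(2\pi i n)^{k}/(2\pi i n)=(2\pi i n)^{k-1}$ turns the sum into $\sum_{n} z^{k+1}(2\pi i n)^{k-1}\sigma(n)e^{2\pi i n z}$. The linear term contributes $-z^{k+1}/24$ when $k=1$, which is $-z^{2}/24$, and $-z\cdot z/24=-z^{2}/24$ when $k=0$. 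It contributes nothing for $k\ge2$. This is exactly the stated expression for $\mu_k$.

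\textbf{Second formula.} We differentiate the first formula termwise in $z$, which is justified by the same uniform convergence. For each summand,
\begin{align*}
\frac{d}{dz}\Big(z^{k+1}(2\pi i n)^{k-1}e^{2\pi i n z}\Big)=(2\pi in)^{k-1}e^{2\pi inz}\big((k+1)z^{k}+2\pi i n\, z^{k+1}\big).
\end{align*}
Factoring out $\frac{1}{2\pi i n}$ rewrites this as $\frac{e^{2\pi i n z}}{2\pi i n}\big((2\pi i n z)^{k+1}+(k+1)(2\pi i n z)^{k}\big)$. Multiplying by $\sigma(n)$ and summing reproduces the stated series. Finally, the constant term $-z^{2}/24$ differentiates to $-z/12$ for $k=0,1$, and it is absent for $k\ge 2$.

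The only point needing care is the interchange of summation and differentiation, together with the rearrangement of the double sum into divisor sums. Both follow from absolute convergence for $y>0$, which is uniform on $y\ge y_0>0$. We note that neither formula actually needs $y\ge1$. The label ``large imaginary part'' refers only to when these expansions are numerically useful, namely when the exponential tails are small. No genuine obstacle arises beyond this bookkeeping of the $k=0,1$ constant terms.
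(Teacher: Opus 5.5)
Your proposal is correct and follows essentially the standard argument behind the results the paper cites for this lemma (Tyler's (4.14) and (4.16), and Propositions 3.5 and 3.6 of the author's earlier work). The paper itself gives no proof. That argument expands $\log\eta(z)=\frac{2\pi i z}{24}-\sum_{n\ge1}\frac{\sigma(n)}{n}e^{2\pi i n z}$, differentiates termwise, and tracks the $k=0,1$ constant terms. Your remark that the identities hold for every $\Im z>0$, and not only for large imaginary part, is also right.
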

	\begin{lemma}\label{eq-2.12}
		When the imaginary part is small, we use the following formula:
		\begin{align*}
			\mu_{k}(z)&=\sum_{n=1}^{\infty}P_{k}\left(\frac{2\pi in}{z}\right)\sigma(n)\exp\left(-\frac{2\pi in}{z}\right)+\frac{(-1)^{k}k!}{24}+\frac{z}{4\pi i}\begin{cases} 
				\log(-iz) & \text{if } k=0 \\ 
				(-1)^{k-1}(k-1)! & \mbox{if } k\ge 1 
			\end{cases}\\
			\notag
			\text{and}\quad \mu_{k}^{\prime}(z)&=\sum_{n=1}^{\infty}Q_{k}\left(\frac{2\pi in}{z}\right)\frac{\sigma(n)}{2\pi in}\exp\left(-\frac{2\pi in}{z}\right)+\frac{1}{4\pi i}\begin{cases} 
				1+\log (-iz) & \mbox{if } k=0 \\ 
				(-1)^{k-1}(k-1)! & \mbox{if } k\ge 1, 
			\end{cases}          
		\end{align*}  
		where $P_0(w)=w^{-1}$ and $P_{k}(w)=(w-k)P_{k-1}(w)-wP^{\prime}_{k-1}(w)$ and $Q_k(w)=w^{2}(P_k(w)-P^{\prime}_k(w))$. For $k=0,1,2,3,4$, explicit values of $P_k$ and $Q_k$ are given in \cite{tyler} (see (4.18)). 
	\end{lemma}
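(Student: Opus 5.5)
The plan is to derive the expansion from the modular transformation law of $\eta$ and then differentiate term by term. The only analytic input is the classical identity $\eta(-1/z)=\sqrt{-iz}\,\eta(z)$, valid on the upper half-plane with the principal branch. Combining it with the product definition of $\eta$ and the expansion $\log\prod_{n\ge1}(1-q^n)=-\sum_{n\ge1}\frac{\sigma(n)}{n}q^n$, I will first establish
\begin{align*}
\log\eta(z)=-\frac{\pi i}{12z}-\frac12\log(-iz)-\sum_{n=1}^{\infty}\frac{\sigma(n)}{n}\exp\left(-\frac{2\pi i n}{z}\right).
\end{align*}
Here $|\exp(-2\pi i/z)|<1$ because $\Im(-1/z)>0$. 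On compact subsets of the upper half-plane this series, and every series obtained from it by termwise differentiation, converges uniformly. This justifies all the termwise differentiation below, and it is the only convergence point to check.

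Next I apply $-\frac{z^{k+1}}{2\pi i}\left(\frac{d}{dz}\right)^k$ to each of the three pieces.

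\emph{The term $-\frac{\pi i}{12z}$.} Its $k$-th derivative is $-\frac{\pi i}{12}(-1)^k k!\,z^{-k-1}$. Multiplying by $-\frac{z^{k+1}}{2\pi i}$ gives the constant $\frac{(-1)^k k!}{24}$.

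\emph{The term $-\frac12\log(-iz)$.} For $k\ge1$ its $k$-th derivative is $-\frac12(-1)^{k-1}(k-1)!\,z^{-k}$. Multiplying by $-\frac{z^{k+1}}{2\pi i}$ gives $\frac{z}{4\pi i}(-1)^{k-1}(k-1)!$. For $k=0$ it gives directly $\frac{z}{4\pi i}\log(-iz)$.

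\emph{The series term.} Set $w=2\pi i n/z$, so that $dw/dz=-w/z$. I will prove by induction on $k$ that
\begin{align*}
-\frac{z^{k+1}}{2\pi i}\left(\frac{d}{dz}\right)^k\left(-\frac{1}{n}e^{-w}\right)=P_k(w)e^{-w}.
\end{align*}
For $k=0$ the left side equals $\frac{z}{2\pi i n}e^{-w}=w^{-1}e^{-w}=P_0(w)e^{-w}$. For the inductive step, write the $(k-1)$-st derivative as a constant multiple of $z^{-k}P_{k-1}(w)e^{-w}$ and differentiate once more:
\begin{align*}
\frac{d}{dz}\left[z^{-k}P_{k-1}(w)e^{-w}\right]=z^{-k-1}e^{-w}\left[(w-k)P_{k-1}(w)-wP_{k-1}'(w)\right].
\end{align*}
The bracket is exactly the recursion defining $P_k$, and the constants telescope correctly. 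Multiplying by $\sigma(n)$ and summing over $n$ gives the first formula.

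For $\mu_k'$, I differentiate the formula just obtained. For the series, the chain rule gives
\begin{align*}
\frac{d}{dz}\left[P_k(w)e^{-w}\right]=-\frac{w}{z}\left(P_k'(w)-P_k(w)\right)e^{-w}=\frac{w^2\left(P_k(w)-P_k'(w)\right)}{wz}e^{-w}=\frac{Q_k(w)}{2\pi i n}e^{-w}.
\end{align*}
The constant $\frac{(-1)^k k!}{24}$ contributes nothing. For $k\ge1$ the term $\frac{z}{4\pi i}(-1)^{k-1}(k-1)!$ contributes $\frac{(-1)^{k-1}(k-1)!}{4\pi i}$. For $k=0$, $\frac{d}{dz}\frac{z\log(-iz)}{4\pi i}=\frac{1+\log(-iz)}{4\pi i}$.

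The only step needing care is the inductive identity for $P_k$: one has to keep track of the factor $z^{k+1}$ and the sign conventions so that the stated recursion comes out exactly. Everything else is routine bookkeeping.
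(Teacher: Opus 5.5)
Your proof is correct. The transformation $\eta(-1/z)=\sqrt{-iz}\,\eta(z)$ gives the stated expansion of $\log\eta(z)$, and your induction reproduces the recursion for $P_k$ exactly (for instance $P_1=1$, $P_2=w-2$, $P_4=w^3-12w^2+36w-24$, as used later in the paper). The chain-rule step with $wz=2\pi i n$ then produces $Q_k$.

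One detail to add: taking logarithms determines $\log\eta(z)$ only up to an additive constant in $2\pi i\mathbb{Z}$. This is irrelevant for $k\ge 1$. For $k=0$, with the principal-branch convention, evaluating at $z=i$ shows the constant is zero.

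The paper gives no proof of its own here; it quotes the formulas from \cite{tyler} (4.16) and \cite{barman2}. Your route, modular transformation followed by termwise differentiation, is the standard way to derive these expansions.
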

	\begin{remark}
		For the next three lemmas, we do not present the full details of the approximations of the infinite sums. The main contribution comes from the initial terms, while the remaining terms can be estimated using standard methods. To keep the exposition concise and focused on the main ideas, we omit these routine calculations.
	\end{remark}
	Next, we establish three lemmas, which play a key role in the proof of Proposition \ref{thm-main}. For convenience, we recall the notation
	\begin{align}\label{eq-bksty1}
		\notag
		b_k(s,t,y)&= 4st\mu_k(iy)+4\mu_k(stiy)+t\mu_k(2siy)+s\mu_k(2tiy)\\
		& -st\mu_{k}(2iy)-\mu_k(2stiy)-4t\mu_k(siy)-4s\mu_k(tiy). 
	\end{align} 
	from \eqref{eq-bksty}, which will be used throughout the remainder of the paper.
	
	\begin{lemma}\label{lemma-mu2}
		Let $0 < y \le \frac{1}{10}$, and let $s > t \ge 10$ be coprime integers.\\
		$(i)$ If $2sty < 1$, then
		\begin{align*}
			\frac{0.99(s-1)(t-1)}{4}<b_2(s,t,y)<\frac{(s+1)(t+1)}{4}.
		\end{align*}
		$(ii)$ If $ty \ge 1$, then
		\begin{align*}
			\frac{0.9st}{4}<b_2(s,t,y)<\frac{(s+1)(t+1)}{4}.
		\end{align*}
		$(ii)$ If $sy \ge 1$ and $2ty < 1$, then
		\begin{align*}
			\frac{0.9s(t-1)}{4}<b_2(s,t,y)<\frac{(s+1)(t+1)}{4}.
		\end{align*}
	\end{lemma}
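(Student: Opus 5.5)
The plan is to evaluate $b_2(s,t,y)$ term by term: each of the eight arguments $m\,iy$ with $m\in\{1,2,s,t,2s,2t,st,2st\}$ is classified as having small or large imaginary part. Lemma~\ref{eq-2.12} is applied to the small ones and Lemma~\ref{eq-(2.1)} to the large ones. Once the main terms are collected, the three cases reduce to bounding exponentially small remainders.

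\textbf{Main terms.} For $k=2$, Lemma~\ref{eq-2.12} gives, for small $my$,
\begin{align*}
\mu_2(miy)=\frac{1}{12}-\frac{my}{4\pi}+\sum_{n\ge1}P_2\!\left(\frac{2\pi n}{my}\right)\sigma(n)\exp\!\left(-\frac{2\pi n}{my}\right),
\end{align*}
using $\frac{z}{4\pi i}\cdot(-1)=-\frac{my}{4\pi}$. For large $my$, Lemma~\ref{eq-(2.1)} shows that $\mu_2(miy)$ has no constant term and is $O\bigl((my)^3e^{-2\pi my}\bigr)$, since its leading term is $-(my)^3 2\pi e^{-2\pi my}$ up to sign. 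Then:

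\begin{itemize}
\item \emph{Case (i).} All eight arguments are small, since $2sty<1$.
 \begin{itemize}
 \item The constants give $\frac{1}{12}(4st+4+t+s-st-1-4t-4s)=\frac{(s-1)(t-1)}{4}$.
 \item The linear terms cancel exactly, because $4st+4st+2st+2st-2st-2st-4st-4st=0$.
 \end{itemize}
\item \emph{Case (ii).} Only $iy$ and $2iy$ are small, since $ty\ge1$ and $y\le\frac1{10}$.
 \begin{itemize}
 \item The main part is $4st(\frac1{12}-\frac{y}{4\pi})-st(\frac1{12}-\frac{2y}{4\pi})=\frac{st}{4}-\frac{sty}{2\pi}$.
 \item Since $y\le\frac1{10}$, this is at least $st(\frac14-\frac{1}{20\pi})>0.23\,st$, which leaves room above $0.9st/4$.
 \end{itemize}
\item \emph{Case (iii).} The arguments $iy,2iy,tiy,2tiy$ are small and the rest are large.
 \begin{itemize}
 \item The constants give $\frac{1}{12}(4st-st+s-4s)=\frac{s(t-1)}{4}$.
 \item The linear terms again cancel, because $4st-2st+2st-4st=0$.
 \end{itemize}
\end{itemize}
In each case the upper bound $\frac{(s+1)(t+1)}{4}$ then follows, since the main term is at most $\frac{st}{4}$ and the remainders are tiny compared with the gap $\frac{s+t+1}{4}$.

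\textbf{Remainders.} Two kinds of error must be bounded.

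\begin{itemize}
\item \emph{Small-argument tails.} These are the sums $\sum_n P_2(w_n)\sigma(n)e^{-w_n}$ with $w_n=\frac{2\pi n}{my}$. The worst case is the largest $m$ in the small set: $m=2st$ in case (i), giving $w_1>2\pi$; and $m=2t$ in case (iii), again giving $w_1>2\pi$. Using the explicit quadratic $P_2$ from \cite{tyler} and $\sigma(n)\le n^2$, each such tail is bounded by an absolute constant times $w_1^2e^{-w_1}$. This is at most about $0.1$ for $w_1\ge2\pi$, so after multiplying by the coefficients ($1,4,s,t,\dots$) it remains small.
\item \emph{Large-argument terms.} These are bounded by $C\,(my)^3e^{-2\pi my}$ with $my\ge1$, hence by an absolute constant times $e^{-2\pi}$, again multiplied by the coefficients.
\end{itemize}

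\textbf{Main obstacle.} The only delicate point is keeping the accumulated remainders, which carry coefficients as large as $4s$ or $4t$, below the margin between the main term and the claimed bounds:
\begin{itemize}
\item in case (i), $0.01\cdot\frac{(s-1)(t-1)}{4}\ge\frac{0.01\cdot 9(s-1)}{4}$;
\item in case (iii), $0.1\cdot\frac{s(t-1)}{4}$;
\item in case (ii), about $0.005\,st$.
\end{itemize}
The hypothesis $t\ge10$ is what secures these margins. In case (i) the worst remainders are $s\,\mu_2(2tiy)$ and $4s\,\mu_2(tiy)$, of size $O(s)\,w^2e^{-w}$ with $w\ge 2\pi s$, which are negligible. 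The constant-coefficient terms involving $st$ and $2st$ have $w\ge2\pi$ and contribute $O(1)$, which is absorbed using $(s-1)(t-1)\ge 9\cdot 10$. I would make these numerical estimates explicit only for the first one or two terms of each series and dispose of the remaining terms by geometric-series bounds, in line with the preceding remark.
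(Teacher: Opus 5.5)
Your proposal is correct. For the three lower bounds it follows the paper's argument. Both classify the eight arguments $m\,iy$ as small or large and apply Lemma~\ref{eq-2.12} or Lemma~\ref{eq-(2.1)} accordingly. Both reach the same main terms $\frac{(s-1)(t-1)}{4}$, $\frac{st}{4}-\frac{sty}{2\pi}$ and $\frac{s(t-1)}{4}$; the cancellation of the linear terms in cases (i) and (iii) is implicit in the paper's $A_1$ and $A_4$. Both then bound exponential tails.

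Where you genuinely differ is the upper bound, and there your route is the one that works. The paper uses only $0<\mu_2(iy)<\frac1{12}$ termwise, giving $b_2<\frac{4st+4+s+t}{12}$, and asserts this is below $\frac{(s+1)(t+1)}{4}$. But
\[
\frac{4st+4+s+t}{12}-\frac{(s+1)(t+1)}{4}=\frac{(s-2)(t-2)-3}{12}>0 \qquad (s>t\ge 10),
\]
so that step fails; at $s=11$, $t=10$ it gives $38.75$ against $33$. Your argument compares the explicit main term, which is at most $\frac{st}{4}$, plus the positive remainders against a gap of at least $\frac{s+t+1}{4}$, and this does prove the stated bound. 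The price is that the upper bound now rests on the full case analysis rather than on a one-line positivity argument. One detail: in case (ii) $s$ and $t$ are unbounded for fixed $y$. So absorb $4st\,T_1$, the tail of $4st\,\mu_2(iy)$, into $-\frac{sty}{2\pi}$ rather than into the gap $\frac{s+t+1}{4}$.

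Two numerical corrections. First, $P_2(w)=w-2$ is linear, not quadratic; this only makes your tail bounds weaker than necessary. Second, and more important, do not round the margin in case (ii). It is $st\bigl(\frac1{40}-\frac{y}{2\pi}\bigr)\ge 0.00908\,st$, not $0.005\,st$. Consider the corner $t=10$, which forces $y=\frac1{10}$ and $ty=1$, with $s=11$. There the crude bound $4(s+t)\mu_2(i)\approx 0.9967$ for $4s\mu_2(tiy)+4t\mu_2(siy)$ already exceeds $0.005\,st=0.55$. It clears the true margin $\approx 0.9993$ by only about $0.003$. The estimate becomes comfortable once you use two facts: $\mu_2(iu)=u^3\sum 2\pi n\sigma(n)e^{-2\pi nu}$ is decreasing for $u\ge 1$, and $sy\ge ty+y$. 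Together these give $\mu_2(siy)\le\mu_2(1.1\,i)\approx 0.0084$ at that corner.
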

	\begin{proof}
		By Lemma~4.2 of \cite{tyler}, we have 
		\begin{align}\label{eq-mu20}
			0 < \mu_2(iy) < \frac{1}{12}.  
		\end{align} 
		Hence, it immediately follows that
		\begin{align*}
			b_2(s,t,y) < \frac{4st + 4 + s + t}{12}<\frac{(s+1)(t+1)}{4}.    
		\end{align*}
		For $0<my<1$, Lemma~\ref{eq-2.12} yields
		\begin{align}\label{eq-mu2210}
			\mu_{2}(miy) &= \sum_{n=1}^{\infty}\left(\frac{2\pi n}{my}-2\right)\sigma(n) \exp\left(-\frac{2\pi n}{my}\right) + \frac{1}{12} - \frac{my}{4\pi}.
		\end{align}
		While for $my\ge 1$ gives 
		\begin{align}\label{eq-mu2211}
			\mu_2(miy)=\sum_{n=1}^{\infty}(my)^{3}(2\pi n)\sigma(n)\exp(-2\pi mny).
		\end{align}
		\item[$(i)$] Consider the regime $2sty < 1$. For $s > t \ge 10$, the infinite series corresponding to $t\mu_2(2siy)$ and $s\mu_2(2tiy)$ dominate those of $4t\mu_2(siy)$ and $4s\mu_2(tiy)$, yielding
		\begin{align*}
			b_2(s,t,y) \ge \frac{(s-1)(t-1)}{4} - A_1.
		\end{align*}
		Using $y \le 1/10$, $2sty < 1$, and the standard bound $\sigma(n) < n^2$, we estimate the residual term $A_1$ by
		\begin{align*}
			A_1 &= st \sum_{n=1}^{\infty} \left( \frac{\pi n}{y} - 2 \right) \sigma(n) \exp\left(-\frac{\pi n}{y}\right) + \sum_{n=1}^{\infty} \left( \frac{\pi n}{sty} - 2 \right) \sigma(n) \exp\left(-\frac{\pi n}{sty}\right) \\
			&\le st \sum_{n=1}^{\infty} (10\pi n - 2) n^2 e^{-10\pi n} + \sum_{n=1}^{\infty} (2\pi n - 2) n^2 e^{-2\pi n}.
		\end{align*}
		Applying this upper bound on $A_1$, we arrive at
		\begin{align*}
			b_2(s,t,y) \ge \frac{0.99(s-1)(t-1)}{4}.
		\end{align*}
		$(ii)$  Next, we consider the case $ty \geq 1$. Applying the estimates from
		\eqref{eq-mu2210} to $\mu_2(iy)$ and $\mu_2(2iy)$, and from
		\eqref{eq-mu2211} to the remaining terms, and estimating the resulting
		exponential tails, we obtain
		\begin{align*}
			b_2(s,t,y) > \frac{3st}{12}-\frac{2sty}{4\pi}-A_2-A_3>\frac{0.9st}{4},
		\end{align*}
		where
		\begin{align*} 
			&A_2 = st \sum_{n=1}^{\infty} \left( \frac{\pi n}{y} - 2 \right) \sigma(n) \exp\left(-\frac{\pi n}{y}\right)+\sum_{n=1}^{\infty}(2sty)^{3}(2\pi n)\sigma(n)\exp(-4\pi nsty),\\ 
			&A_3=4t\sum_{n=1}^{\infty}(sy)^{3}(2\pi n)\sigma(n)\exp(-2\pi sny)+4s\sum_{n=1}^{\infty}(ty)^{3}(2\pi n)\sigma(n)\exp(-2\pi tny). 
		\end{align*}
		Here, the bounds for $A_2$ and $A_3$ follow from
		$sy>ty\geq1$, $0<y\leq1/10$, and $s>t\geq10$.
		\newline
		\item[$(iii)$] Finally, consider $sy \ge 1$ and $2ty < 1$. Combining \eqref{eq-mu2210} for $\mu_2(iy), \mu_2(2iy), \mu_2(tiy), \mu_2(2tiy)$ with \eqref{eq-mu2211} for $\mu_2(siy), \mu_2(2siy), \mu_2(stiy), \mu_2(2stiy)$, we obtain
		\begin{align*}
			b_2(s,t,y) \ge \frac{s(t-1)}{4} - A_2 - A_4 \ge \frac{0.9s(t-1)}{4},
		\end{align*}
		where $A_2$ is defined as above and
		\begin{align*}
			A_4 = 4t \sum_{n=1}^{\infty} (sy)^3 (2\pi n) \sigma(n) \exp({-2\pi sny}) + 4s \sum_{n=1}^{\infty} \left( \frac{2\pi n}{ty} - 2 \right) \sigma(n) \exp\left({-\frac{2\pi n}{ty}}\right).
		\end{align*}
		Here, the bound for $A_4$ follows by estimating the corresponding
		exponential tails using $sy\geq1$, $2ty<1$, $0<y\leq1/10$, and
		$s>t\geq10$. 
		
	\end{proof}
	\begin{lemma}\label{lemma-mu3}
		If $0<y\le \frac{1}{10}$ and $s>t\ge 10$ are relatively prime integers, then
		\begin{align*}
			\left|\frac{b_3(s,t,y)}{b_2(s,t,y)}\right|<6.  
		\end{align*}
	\end{lemma}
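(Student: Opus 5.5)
The plan is to bound $|b_3(s,t,y)|$ from above in each of the three regimes of Lemma~\ref{lemma-mu2}, and then divide by the lower bound for $b_2(s,t,y)$ given there. The regimes are $2sty<1$, $ty\ge 1$, and $sy\ge1$ with $2ty<1$; as in Lemma~\ref{lemma-mu2}, these are the cases we treat. We use the same splitting as in that proof. For each argument $miy$ with $my<1$ we apply Lemma~\ref{eq-2.12} with $k=3$. Since $P_3(2\pi n/(my))$ is a cubic in $n/(my)$, this gives
\begin{align*}
\mu_3(miy)=\sum_{n=1}^{\infty}P_3\left(\frac{2\pi n}{my}\right)\sigma(n)\exp\left(-\frac{2\pi n}{my}\right)-\frac14+\frac{my}{2\pi}.
\end{align*}
For $my\ge1$ we apply Lemma~\ref{eq-(2.1)}, which gives $\mu_3(miy)=-\sum_{n\ge1}(my)^4(2\pi n)^2\sigma(n)\exp(-2\pi mny)$. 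This quantity is real and, for $my\ge1$, bounded in absolute value by a small absolute constant (of order $(2\pi)^2e^{-2\pi}$ times a tail factor).

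In the regime $2sty<1$ all eight arguments have small imaginary part. The constant parts then combine with the coefficients $4st,4,t,s,-st,-1,-4t,-4s$ to give
\begin{align*}
-\tfrac14(3st-3s-3t+3)=-\tfrac34(s-1)(t-1),
\end{align*}
exactly three times the main term $\frac{(s-1)(t-1)}{4}$ of $b_2$. The linear terms $\frac{my}{2\pi}$ cancel, because $4st+4st+2st+2st-2st-2st-4st-4st=0$. What remains is the exponential series. As in part $(i)$ of Lemma~\ref{lemma-mu2}, the series at $2siy$ and $2tiy$ have the favourable sign or dominate those at $siy$ and $tiy$. The ones at $iy,2iy$ (weighted by $st$) and at $stiy,2stiy$ we bound using $y\le\frac1{10}$, $2sty<1$ and $\sigma(n)<n^2$, obtaining terms like $st\sum_n|P_3(\pi n/y)|n^2e^{-\pi n/y}$ with $|P_3(w)|\ll w^3$. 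Hence $|b_3|\le \frac34(s-1)(t-1)(1+\varepsilon)$ with small $\varepsilon$, and Lemma~\ref{lemma-mu2}$(i)$ gives a ratio of at most about $3/0.99$ plus a small amount, well below $6$.

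In the regime $ty\ge1$, only $\mu_3(iy)$ and $\mu_3(2iy)$ use the small-argument formula. Their main contribution is $-\frac14(4st-st)+\frac{y}{2\pi}(4st-2st)=-\frac34 st+\frac{sty}{\pi}$. The remaining six terms are bounded by the large-argument series, with weights $4,t,s,1,4t,4s$. Since $t\ge10$ and $s>t$, all of these weights are at most $st/10\cdot O(1)$ times constants of size $e^{-2\pi}$. So $|b_3|\le st\bigl(\frac34+\frac{1}{10\pi}+\text{small}\bigr)$, and with $b_2>0.9st/4$ the ratio is about $3.5<6$.

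The mixed regime $sy\ge1$, $2ty<1$ is handled the same way. The small-argument terms at $iy,2iy,tiy,2tiy$ contribute $-\frac34(st-s)=-\frac34 s(t-1)$ plus the linear part $\frac{y}{2\pi}(4st-2st+2s\,t\cdot 0)$, which is $O(sty)$. Precisely, the $t$-terms contribute $\frac{y}{2\pi}(2st-4st)$, so the linear part is bounded by $\frac{sty}{\pi}$; since $2ty<1$ this is $<s/(2\pi)$. The large-argument terms are exponentially small multiples of $s,t$. Comparing with $b_2>0.9s(t-1)/4$ gives a ratio below $6$, using $t\ge10$ to absorb $s/(2\pi)$ into $s(t-1)$.

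I expect the main obstacle to be the first regime, specifically controlling the exponential series in Lemma~\ref{eq-2.12} with $P_3$ cubic. When $sy$ or $ty$ is close to $1$, the series at $siy,tiy,2siy,2tiy$ are not small individually (for example $e^{-2\pi}$ times $P_3(2\pi)$ is moderate). They are multiplied by $4t,4s,t,s$, and these weights are smaller than $st$ only by a factor of roughly $t\ge10$. So I would first try to bound these four series crudely by an absolute constant $C$ times $(4s+4t+s+t)$. I would then check that $5C(s+t)\le \frac{9}{4}\cdot 0.99(s-1)(t-1)$ for $t\ge10$, which leaves ample room beneath the threshold $6$ versus the main ratio $3$.
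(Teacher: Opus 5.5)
Your argument is correct, but it takes a different route from the paper's.

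\textbf{The paper's proof.} It never expands $\mu_3$. It quotes Tyler's bound $-\frac14<\mu_3(iy)<0$, valid for \emph{all} $y>0$. It then notes that in each of the four pairs $4st\mu_3(iy)-st\mu_3(2iy)$, $4\mu_3(stiy)-\mu_3(2stiy)$, $t\mu_3(2siy)-4t\mu_3(siy)$ and $s\mu_3(2tiy)-4s\mu_3(tiy)$ both terms have the same sign. Using $|a-b|\le\max\{|a|,|b|\}$ for such terms gives $|b_3(s,t,y)|<st+1+t+s=(s+1)(t+1)$, uniformly in $y$. Dividing by the lower bounds of Lemma~\ref{lemma-mu2} gives at worst $4\cdot\frac{12}{10}\cdot\frac{11}{9}/0.99\approx 5.93<6$. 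So the numerator needs no case split and no tail estimates, and the constant $6$ is essentially what this crude bound produces.

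\textbf{Your proof.} You compute the leading term of $b_3$ in each regime: $-\frac34(s-1)(t-1)$, then $-\frac34 st+\frac{sty}{\pi}$, then $-\frac34 s(t-1)$. This shows $b_3\approx -3\,b_2$ and gives a ratio of about $3$ to $4$, with ample slack. The price is redoing the exponential-tail estimates in every regime, and again in any regime one might add later. The paper's numerator bound is regime-free; only the lower bound on $b_2$ restricts it.

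\textbf{Minor corrections.} None of these affects validity.
\begin{itemize}
\item $P_3(w)=w^2-6w+6$ is quadratic, not cubic. This is harmless for your upper bounds.
\item In the mixed regime the linear terms cancel exactly, since $4st-2st+2st-4st=0$. The extra $s/(2\pi)$ term is therefore unnecessary.
\item The obstacle you anticipate in the first regime does not arise. There $2sty<1$ forces $sy<1/(2t)\le 1/20$ and $ty<1/(2s)<1/22$, so the series at $siy,2siy,tiy,2tiy$ are negligible.
\item The moderately sized series occur only in the other two regimes. Examples are the series at $2tiy$ when $2ty$ is near $1$, or the large-argument series when $ty$ is near $1$. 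Your crude absolute-constant bound already covers them.
\end{itemize}
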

	\begin{proof}
		By Lemma~4.3 of \cite{tyler}, we have $-\frac{1}{4} < \mu_3(iy) < 0$ for all $y > 0$. Since $s>t\ge10$, all terms in each pair share the same sign. Using the fact that $|a-b| \le \max\{|a|,|b|\}$ for any real numbers $a, b$ of the same sign, we obtain
		\begin{align*}
			|b_3(s,t,y)| &\le \max\bigl\{|4st\mu_3(iy)|,\, |st\mu_3(2iy)|\bigr\} + \max\bigl\{|4\mu_3(stiy)|,|\mu_3(2stiy)|\bigr\} \\
			&\quad + \max\bigl\{|t\mu_3(2siy)|,\, |4t\mu_3(siy)|\bigr\} + \max\bigl\{|s\mu_3(2tiy)|,\, |4s\mu_3(tiy)|\bigr\} \\
			&< \frac{4st + 4 + 4t + 4s}{4} 
			= (s+1)(t+1).
		\end{align*}
		Combining the preceding estimate with Lemma~\ref{lemma-mu2}, we conclude that
		\begin{align*}
			\left|\frac{b_3(s,t,y)}{b_2(s,t,y)}\right|<6.  
		\end{align*}       
	\end{proof}
	\begin{lemma}\label{lemma-mu4}
		Suppose that $z=x+iy$ with $0<y\le \frac{1}{10}$ and $|x|<\frac{y}{3}$. For any $s>t\ge 10$ that are relatively prime integers, we have
		\begin{align*}
			\left|\frac{\ell(s,t,z)}{b_2(s,t,y)}\right|<36,
		\end{align*}    
		where
		\begin{align*}
			\ell(s,t,z)&=4st\mu_4(z)+4\mu_4(stz)+t\mu_4(2sz)+s\mu_4(2tz)\\
			& -st\mu_{4}(2z)-\mu_4(2stz)-4t\mu_4(sz)-4s\mu_4(tz).  
		\end{align*}
	\end{lemma}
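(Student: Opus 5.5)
We bound $|\ell(s,t,z)|$ above by a multiple of $(s+1)(t+1)$ and then divide by the lower bounds for $b_2(s,t,y)$ in Lemma~\ref{lemma-mu2}. This mirrors the proof of Lemma~\ref{lemma-mu3}. The new feature is that $z$ is now complex, with $|x|<y/3$, so we need a uniform bound for $|\mu_4(mz)|$ on the cone $\{z:\ 0<\Im z,\ |\Re z|<\Im z/3\}$ instead of a sign statement on the imaginary axis. Scaling by a positive integer $m$ maps this cone into itself, since $mz=mx+imy$ with $|mx|<my/3$. Therefore one bound
\begin{align*}
|\mu_4(w)|\le C_4\qquad\text{for all } w \text{ with } |\Re w|<\tfrac{1}{3}\Im w
\end{align*}
covers all eight terms at once.

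\textbf{Step 1: the uniform bound $C_4$ on the cone.} For $\Im w\ge 1$ we use Lemma~\ref{eq-(2.1)}: $\mu_4(w)=\sum_n w^5(2\pi i n)^3\sigma(n)e^{2\pi i n w}$. Here $|w|\le \sqrt{10}/3\,\Im w$ and $|e^{2\pi i n w}|=e^{-2\pi n\Im w}$, so the series is dominated by $\sum_n (1.06\,v)^5(2\pi n)^3 n^2 e^{-2\pi n v}$ with $v=\Im w\ge1$, which is tiny.

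For $\Im w<1$ we use Lemma~\ref{eq-2.12}: $\mu_4(w)=\sum_n P_4(2\pi i n/w)\sigma(n)e^{-2\pi i n/w}+\tfrac{24}{24}-\tfrac{6w}{4\pi i}$. On the cone $\Re(-2\pi i n/w)=-2\pi n\Im w/|w|^2\le -2\pi n\cdot\tfrac{9}{10}/\Im w$, so the exponentials decay like $e^{-5.6 n/\Im w}$ and beat the polynomial $P_4$. This leaves roughly $|\mu_4(w)|\le 1+\tfrac{6|w|}{4\pi}+(\text{small})$, at most about $1.5$ for $\Im w<1$.

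Thus $C_4$ is an explicit constant near $1.5$. In the intermediate range $\Im w\approx 1$, both formulas are valid, and we take whichever tail estimate is better.

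\textbf{Step 2: bound $\ell$.} By the triangle inequality,
\begin{align*}
|\ell(s,t,z)|\le C_4\,(4st+4+t+s+st+1+4t+4s)=C_4(5st+5s+5t+5)=5C_4(s+1)(t+1).
\end{align*}
Pairing terms as in Lemma~\ref{lemma-mu3} is not available here, because the values are complex and have no common sign. The crude triangle bound suffices, provided $C_4$ is not too large.

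\textbf{Step 3: divide.} Lemma~\ref{lemma-mu2} gives $b_2\ge 0.9\,s(t-1)/4$ in the worst of the three regimes; the other regimes give larger lower bounds. For $s>t\ge10$ we have $(s+1)(t+1)/(s(t-1))\le \tfrac{11}{10}\cdot\tfrac{11}{9}\approx1.35$. Hence
\begin{align*}
|\ell/b_2|\le \frac{20C_4}{0.9}\cdot 1.35\approx 30\,C_4.
\end{align*}
To get below $36$ we need $C_4\lesssim 1.2$.

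\textbf{Main obstacle.} The main difficulty is obtaining a sharp enough $C_4$ in Step 1, especially the constant term $1$ and the linear term $6w/(4\pi i)$ in the small-$\Im w$ regime. If the crude bound is too weak, we will exploit cancellation between paired terms. The constant $(-1)^4 4!/24=1$ contributes $4st+4+t+s-st-1-4t-4s=3st-3s-3t+3=3(s-1)(t-1)$ exactly, when every argument $mz$ is in the small regime. The linear terms $\tfrac{6mz}{4\pi i}$ weighted by the coefficients cancel identically, since $4st\cdot1+4st+t\cdot2s+s\cdot2t-st\cdot2-2st-4ts-4st=0$. Therefore $\ell\approx 3(s-1)(t-1)+(\text{exponentially small})$ in the regime $2sty<1$. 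Comparing with $b_2\ge 0.99(s-1)(t-1)/4$ gives a ratio of about $12<36$. In the other regimes, the large-argument terms are exponentially small, and the same bookkeeping applies to the surviving small-argument terms. Comparing with the matching cases (ii) and (iii) of Lemma~\ref{lemma-mu2} keeps the ratio below $36$.
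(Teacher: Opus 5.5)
\textbf{Your main route does not close as written; your fallback does, and it is the paper's proof.} With your estimate $C_4\approx 1.5$, Steps~1--3 give $|\ell/b_2|\lesssim 29.6\cdot 1.5\approx 44>36$, as you note yourself. The argument that proves the lemma is the fallback in your ``main obstacle'' paragraph, and it is exactly what the paper does. In each of the three regimes of Lemma~\ref{lemma-mu2}, expand $\mu_4(mz)$ by the small-$\Im$ formula when $my<1$ and by the $q$-expansion when $my\ge 1$. The constant terms $1$ then contribute $3(s-1)(t-1)$, $3st$ and $3s(t-1)$ respectively. The paper bounds everything else crudely, obtaining $|\ell|<6(s-1)(t-1)$, $6st$ and $6s(t-1)$, which gives ratios of about $24$ to $27$ against the lower bounds for $b_2$.

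\textbf{Two statements in your fallback need correcting,} though neither threatens the conclusion.

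First, in the regime $ty\ge 1$ only $m=1,2$ lie in the small range, and their linear terms do not cancel. They leave $4st\cdot\bigl(-\frac{3z}{2\pi i}\bigr)-st\cdot\bigl(-\frac{6z}{2\pi i}\bigr)=-\frac{3stz}{\pi i}$. This is why the paper keeps the term $\bigl|3st-\frac{3stz}{\pi i}\bigr|$. It is harmless, because $|z|<0.11$.

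Second, the leftover terms are not exponentially small near the thresholds. For example, $\mu_4(i)=\sum_n(2\pi n)^3\sigma(n)e^{-2\pi n}\approx 0.48$, so when $ty=1$ and $x=0$ the term $-4s\mu_4(tz)$ is about $-1.9s$. Similarly, the $m=2st$ tail in regime (i) and the $m=2t$ tail in regime (iii) are only bounded, not exponentially small, when $2sty$ or $2ty$ is close to $1$. What you actually need, and what is true, is that each such term is $O(1)$ times a coefficient of size at most $4s$. That is small compared with $3st$, $3s(t-1)$ or $3(s-1)(t-1)$, because $t\ge 10$.

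\textbf{Your uniform route can be rescued.} The ``main obstacle'' comes from applying the triangle inequality to $1-\frac{3w}{2\pi i}$. Writing $w=x+iv$, this equals $1-\frac{3v}{2\pi}+\frac{3ix}{2\pi}$. On the cone with $0<v<1$ its modulus is $<1$, because the linear term pulls the real part \emph{down}. Use this bound for $v$ below about $0.6$, and the crude $q$-expansion bound above that (it is at most about $0.94$ there). This gives $C_4$ close to $1$, and hence $|\ell/b_2|\le 29.6\,C_4<36$.

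That version bounds $\ell$ without splitting into regimes, at the cost of a weaker constant than the paper's. It still needs the lower bounds for $b_2$ from Lemma~\ref{lemma-mu2}, so, like the paper's argument, it only covers those three regimes.
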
 
	\begin{proof}
		According to Lemma~\ref{eq-(2.1)}, the explicit expression for $\mu_4$ for $my \ge 1$ reads
		\begin{equation}\label{eq-mu41}
			\mu_{4}(mz) = \sum_{n=1}^{\infty}(mz)^{5}(2\pi in)^{3}\sigma(n)\exp(2\pi in mz),
		\end{equation}
		whereas for $my < 1$, Lemma~\ref{eq-2.12} gives
		\begin{equation}\label{eq-mu42}
			\mu_{4}(mz)=\sum_{n=1}^{\infty}P_4\left(\frac{2\pi in}{mz}\right)\sigma(n)\exp\left(-\frac{2\pi in}{mz}\right)+1-\frac{3mz}{2\pi i},   
		\end{equation}
		where $P_4(u)=u^{3}-12u^{2}+36u-24$ and $|P_4(u)|\le |u^{3}|+12|u^{2}|+36|u|+24$.
		\newline
		For $|x| < \frac{y}{3}$, the key exponential and polynomial factors satisfy
		\begin{equation}\label{eq-mu43}
			\begin{aligned}
				& \left|\left(\frac{2\pi in}{mz}\right)^{k}\right| <\frac{(2\pi n)^{k}}{(my)^{k}} \quad \text{and}\quad \left|\exp\left(-\frac{2\pi in}{mz}\right)\right|\le \exp\left(-\frac{9\pi n}{5my}\right).
			\end{aligned}
		\end{equation}  
		We first consider the range $2sty < 1$. From \eqref{eq-mu42}, we have
		\begin{align*}
			|\ell(s,t,z)|\leq 3(s-1)(t-1)+\sum_{j=1}^4|B_j|, 
		\end{align*}
		where
		\begin{align*}
			& B_1=4st\sum_{n=1}^{\infty}P_4\left(\frac{2\pi in}{z}\right)\sigma(n)\exp\left(-\frac{2\pi in}{z}\right)-st\sum_{n=1}^{\infty}P_4\left(\frac{2\pi in}{2z}\right)\sigma(n)\exp\left(-\frac{2\pi in}{2z}\right), \\ 
			& B_2=4\sum_{n=1}^{\infty}P_4\left(\frac{2\pi in}{stz}\right)\sigma(n)\exp\left(-\frac{2\pi in}{stz}\right)-\sum_{n=1}^{\infty}P_4\left(\frac{2\pi in}{2stz}\right)\sigma(n)\exp\left(-\frac{2\pi in}{2stz}\right),\\
			& B_3=t\sum_{n=1}^{\infty}P_4\left(\frac{2\pi in}{2sz}\right)\sigma(n)\exp\left(-\frac{2\pi in}{2sz}\right)-4t\sum_{n=1}^{\infty}P_4\left(\frac{2\pi in}{sz}\right)\sigma(n)\exp\left(-\frac{2\pi in}{sz}\right)\quad \text{and}\\
			& B_4=s\sum_{n=1}^{\infty}P_4\left(\frac{2\pi in}{2tz}\right)\sigma(n)\exp\left(-\frac{2\pi in}{2tz}\right)-4s\sum_{n=1}^{\infty}P_4\left(\frac{2\pi in}{tz}\right)\sigma(n)\exp\left(-\frac{2\pi in}{tz}\right).
		\end{align*}
		Under the conditions $0 < y \le \frac{1}{10}$, $2sty < 1$, and $s > t \ge 10$, applying the estimates in \eqref{eq-mu43} directly yields $\sum_{j=1}^4 |B_j| < 3(s-1)(t-1)$. Consequently,
		\begin{align*}
			|\ell(s,t,z)| < 6(s-1)(t-1).
		\end{align*}
		Next, we address the regime $ty \ge 1$. Equations \eqref{eq-mu41} and \eqref{eq-mu42} imply
		\begin{align*}
			|\ell(s,t,z)| \le \left| 3st - \frac{3st z}{\pi i} \right| + |B_1| + |B_5| + |B_6| + |B_7|,
		\end{align*}
		where $B_1$ is as above, and the remaining terms are
		\begin{align*}
			B_5 &= \sum_{n=1}^{\infty} (stz)^5 (2\pi in)^3 \sigma(n) \left[ 4 \exp({2\pi instz}) - 32 \exp({4\pi instz}) \right], \\
			B_6 &= t \sum_{n=1}^{\infty} (sz)^5 (2\pi in)^3 \sigma(n) \left[ 32 \exp({4\pi insz}) - 4 \exp({2\pi insz}) \right], \\
			B_7 &= s \sum_{n=1}^{\infty} (tz)^5 (2\pi in)^3 \sigma(n) \left[ 32 \exp({4\pi intz}) - 4 \exp({2\pi intz}) \right].
		\end{align*}
		Given $sy > ty \ge 1$, $0 < y \le 1/10$, and $s > t \ge 10$, the exponentially small factors in $B_j$ yield $|B_1| + |B_5| + |B_6| + |B_7| < 2st$. Combining this with $\left| 3st - \frac{3st z}{\pi i} \right| < 4st$ completes the proof of 
		$$|\ell(s,t,z)| < 6st.$$
		Finally, we consider the case $sy\geq1$ and $2ty<1$. Proceeding as in the preceding two cases, and using the corresponding estimates for the error terms, we obtain
		\begin{align*}
			|\ell(s,t,z)|< 6s(t-1).  
		\end{align*}
		Collecting the estimates from the cases considered above and using Lemma~\ref{lemma-mu2}, we deduce
		\begin{align*}
			\left|\frac{\ell(s,t,z)}{b_2(s,t,y)}\right|<36.
		\end{align*}  
	\end{proof}
	\section{Proof of Main Results}\label{sec-4}
	In this section, we prove all results presented in the introduction. Let $\vartheta \in \mathbb{C}$ such that $|\vartheta|\le1.$ Before establishing our main results, we first prove a preliminary proposition required for the derivation of the asymptotic formula 
	for $\bar{p}_{s,t}(N)$. In the proof, the value of $\vartheta$ may depend on the relevant parameters and may vary from one occurrence to another. In Theorem \ref{thm-1.1}$(i)$, we prove that $y$ satisfies (\ref{eq-solmain}). Hence, we can take such a $y$ in the following proposition without affecting its generality.
	\begin{proposition}\label{thm-main}
		Recall $b_k(s,t,y)$ from \eqref{eq-bksty1}, and choose $y$ such that
		\begin{equation*}
			\left|\frac{-b_1(s,t,y)
			}{2sty^{2}}- N  \right|<\frac{2}{25y}   
		\end{equation*}
		and suppose that $y\le \frac{1}{1000}$ and $s>t\ge 50$ are relatively prime integers. Then,
		\begin{align*}
			\bar{p}_{s,t}(N)= \frac{y^{\frac{3}{2}}\sqrt{2st}\,\exp\left(2\pi Ny\right)H(iy,s,t)}{\sqrt{b_2(s,t,y)}}
			\left(1+O\left( \frac{sty}{b_2(s,t,y)}\right)\right).
		\end{align*}
	\end{proposition}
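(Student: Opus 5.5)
We start from the decomposition \eqref{eq-qNt} and treat the two ranges separately. On the minor arc $\frac{y}{3}\le|x|\le\frac12$, Lemma~\ref{pro-error} (applicable since $y\le\frac{1}{1000}$ and $s>t\ge 50$) bounds the second integral by $1.5\exp(2\pi Ny)H(iy,s,t)\exp(-\frac{\pi}{600y})$. Relative to the claimed main term, whose size is $y^{3/2}\sqrt{2st}/\sqrt{b_2}$, this quantity is $O(\sqrt{b_2/st}\,y^{-3/2}e^{-\pi/(600y)})$. By Lemma~\ref{lemma-mu2} we have $b_2\asymp st$ in all three regimes, so this is $O(y)=O(sty/b_2)$, exponentially small in $1/y$. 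Hence all the work lies in the major arc $|x|<\frac{y}{3}$.

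On the major arc we use the Taylor expansion \eqref{eq-tay1}. We write its exponent as
\[
2\pi i\Bigl(-Nx+\tfrac{1}{2\pi i}\log\tfrac{H(z)}{H(iy)}\Bigr)=2\pi i x\Bigl(\tfrac{b_1}{2st(iy)^2}-N\Bigr)-\pi\,\frac{b_2}{2sty^3}x^2+R(x).
\]
Since $(iy)^2=-y^2$, the linear coefficient is $-b_1/(2sty^2)-N$, which by hypothesis has modulus $<\frac{2}{25y}$. The cubic term is $\frac{2\pi i\,x^3 b_3}{6\cdot 2st(iy)^4}$. We control the remainder beyond it by the Lagrange form with the fourth derivative evaluated at a point $z'$ on the segment, where $|x'|<y/3$; this is exactly $\ell(s,t,z')$ times $\frac{2\pi i}{2st z'^5}\frac{x^4}{24}$. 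Lemma~\ref{lemma-mu3} gives $|b_3|<6b_2$ and Lemma~\ref{lemma-mu4} gives $|\ell|<36 b_2$. With $\lambda:=\pi b_2/(2sty^3)$, the cubic term is therefore $\le C\lambda |x|^3/y$ and the quartic term is $\le C\lambda x^4/y^2$.

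We rescale $x=u/\sqrt{\lambda}$, noting $\lambda\asymp y^{-3}$ since $b_2\asymp st$. Then $|x|\ll y^{3/2}|u|$, the linear term becomes $O(\sqrt{y}\,u)$, the cubic term $O(\sqrt{y}\,u^3)$, and the quartic term $O(y\,u^4)$. On the core range $|u|\le y^{-1/6}$ (say) all of these are small. We expand $e^{R}=1+(\text{linear}+\text{cubic})+O(y(u^2+u^6+u^4))$. The odd terms integrate to zero against $e^{-u^2}$ over a symmetric interval, which is why the error is $O(y)$ rather than $O(\sqrt y)$. The linear term needs care: its square contributes $O(y u^2)$, which is fine, but its cross term with the cubic is also $O(yu^4)$, which is harmless. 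Thus the core integral equals
\[
\lambda^{-1/2}\int e^{-u^2}\,du\,(1+O(y))=\sqrt{\pi/\lambda}\,(1+O(y))=\frac{y^{3/2}\sqrt{2st}}{\sqrt{b_2}}(1+O(y)),
\]
and $O(y)=O(sty/b_2)$ by Lemma~\ref{lemma-mu2}. On the tail $y^{-1/6}<|u|\le\sqrt{\lambda}\,y/3$, i.e.\ $|x|$ up to $y/3$, we need the real part of the exponent to remain $\le -c\lambda x^2$. We check that the cubic and quartic real parts are dominated: the cubic is $\le 6\cdot\frac{\lambda}{3\pi}\cdot\frac{|x|^3}{y}\le \frac{2}{3\pi}\lambda x^2$ for $|x|<y/3$, and the quartic gives a similar fraction. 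The linear term is purely imaginary up to the sign conventions, since $b_1$ is real, so it contributes nothing. The tail is then $O(\lambda^{-1/2}e^{-cy^{-1/3}})$, which is negligible.

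\textbf{Main obstacle.} The main obstacle is this last domination on the full range $|x|<y/3$. The constants $6$ and $36$ from Lemmas~\ref{lemma-mu3} and \ref{lemma-mu4} must leave a positive fraction of $\lambda x^2$. The quartic remainder is $\frac{36\,b_2}{24\cdot 2st}\cdot\frac{2\pi x^4}{|z'|^5}$ with $|z'|\ge y$, which at $|x|=y/3$ is about $\frac{3}{2}\cdot\frac{2\pi}{9}\cdot\frac{b_2}{2sty^3}x^2\cdot$const. If this crude bound is too large, we will shrink the major arc to $|x|<\kappa y$ for a small absolute $\kappa$. We then extend the minor-arc bound to $\kappa y\le|x|\le y/3$ by reusing the estimate from the proof of Lemma~\ref{pro-error}, which still yields $e^{-c/y}$ decay there. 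We combine the two ranges and multiply by $\exp(2\pi Ny)H(iy,s,t)$ to finish.
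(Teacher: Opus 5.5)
Your argument is correct and has the same overall structure as the paper's proof. You bound the minor arc with Lemma~\ref{pro-error}, expand \eqref{eq-tay1} to fourth order with the cubic and quartic coefficients controlled by Lemmas~\ref{lemma-mu3} and~\ref{lemma-mu4}, and evaluate a Gaussian-type integral on $|x|<y/3$. The one real difference is the last step. The paper writes the exponent as $2\pi i\beta w-\pi\alpha w^2(1+2i\epsilon_2 w+3\vartheta w^2)$, with $w=x/y$, $\alpha=b_2/(2sty)>38$ and $|\beta|<\frac{2}{25}$. It then quotes Tyler's Lemma~4.1, which gives the integral as $\alpha^{-1/2}(1+3.45\vartheta/\alpha)$ with an explicit constant. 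You re-derive that estimate by hand: rescale by $\sqrt{\lambda}$, split into core and tail, expand $e^{R}$ to second order, and use parity to kill the linear and cubic terms. Your route is self-contained and shows clearly why the error is $O(y)$ rather than $O(\sqrt{y})$, but it only gives an unspecified $O$-constant; the paper's route is shorter and explicit. One small correction on the core: at $|u|=y^{-1/6}$ the cubic term has size about $1$, not small. This is harmless, because $|e^{R}-1-R|\le\frac12|R|^2e^{\max(0,\mathrm{Re}\,R)}$ and $\mathrm{Re}\,R$ comes only from the quartic term, which is $O(y^{1/3})$ there.

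The step you call the main obstacle is not actually an obstacle, but resolving it needs an observation you did not make. Since $b_3$ is real and $(iy)^4=y^4$, the cubic term $2\pi i\,x^3b_3/(12sty^4)$ is purely imaginary, so it contributes nothing to the real part. Only the quartic remainder does, and its modulus is at most $\frac{2\pi x^4}{24}\cdot\frac{36\,b_2}{2sty^5}=3\lambda x^4/y^2\le\frac13\lambda x^2$ for $|x|<y/3$. Hence the real part of the exponent is at most $-\frac23\lambda x^2$ on the whole arc. This is exactly the paper's bound $\mathrm{Re}(1+2i\epsilon_2 w+3\vartheta w^2)\ge 1-3w^2\ge\frac23$.

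The observation is genuinely needed. You dropped a factor of $\pi$ in the cubic bound: its modulus can be as large as $2\lambda|x|^3/y\le\frac23\lambda x^2$, not $\frac{2}{3\pi}\lambda x^2$. Bounding both the cubic and quartic terms by modulus therefore gives $\frac23+\frac13=1$, which leaves no margin at all. Your fallback of shrinking the arc to $|x|<\kappa y$ would also not come for free. Lemma~\ref{pro-error} works because the decay $\pi/(120y)$ from \eqref{eq-error}, valid for $|x|\ge y/3$, absorbs the growth $\pi/(6sy)+\pi/(6ty)$, and that lemma would have to be redone on the larger range.
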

	\begin{proof}
		Recall the integral representation for $\bar{p}_{s,t}(N)$ given in \eqref{eq-qNt}. 
		Applying Lemma~\ref{pro-error}, we obtain
		\begin{align}\label{eq-qNt1}
			\bar{p}_{s,t}(N)&=\exp(2\pi Ny)H(iy,s,t)\int_{-y/3}^{y/3}\exp\left(-2\pi i Nx+2\pi i\frac{1}{2\pi i}\log\frac{H(z,s,t)}{H(iy,s,t)}\right)dx\\
			\notag
			&+\exp(2\pi Ny)H(iy,s,t)\left(\vartheta 1.5\exp\left(-\frac{\pi }{600y}\right)\right).
		\end{align}
		The Taylor expansion of \eqref{eq-tay1} yields
		\begin{align*}
			\frac{1}{2\pi i}\log \frac{H(z,s,t)}{H(iy,s,t)}&=x\frac{b_1(s,t,y)}{2st(iy)^{2}}+\frac{x^{2}}{2!}\frac{b_2(s,t,y)}{2st(iy)^{3}}+\frac{x^{3}}{3!}\frac{b_3(s,t,y)}{2st(iy)^{4}}+\frac{x^{4}}{4!}\frac{\ell(s,t,z')}{2st(z^{\prime})^{5}},
		\end{align*}
		where $\ell(s,t,z')$ is defined in Lemma \ref{lemma-mu4}, and $z' = x' + iy$ for some $x' \in (0,x)$. Combining Lemma~\ref{lemma-mu3} and Lemma~\ref{lemma-mu4}, we deduce that
		\begin{align*}
			\frac{1}{2\pi i}\log \frac{H(z,s,t)}{H(iy,s,t)}&=x\frac{b_1(s,t,y)}{2st(iy)^{2}}
			+\frac{x^{2}}{2}\frac{b_2(s,t,y)}{2st(iy)^{3}}\left(1+2i\epsilon_2\frac{x}{y}+\vartheta 3\frac{x^{2}}{y^{2}}\right),
		\end{align*}
		where $\epsilon_2 \in (-1,1)$ and $|\vartheta|\le 1.$
		\newline
		Let 
		\begin{align*}
			\alpha=\frac{b_2(s,t,y)}{2sty}\qquad \text{and  }\qquad \beta=y\left(\frac{b_1(s,t,y)}{2st(iy)^{2}}-N\right). 
		\end{align*}
		Since we have already assumed in Proposition \ref{thm-main} that $y\le \frac{1}{1000}$ and $s>t\ge 50$, it follows from Lemma \ref{lemma-mu2} that $\alpha>38$. Moreover, under the first assumption of Proposition \ref{thm-main}, $|\beta|<\frac{2}{25}$. Now the integrand of (\ref{eq-qNt1}) further simplifies to
		\begin{equation*}
			\begin{aligned}
				&\exp\left(-2\pi i Nx+2\pi i\frac{1}{2\pi i}\log\frac{H(z,s,t)}{H(iy,s,t)}\right)\\
				&=\exp\left(\frac{2\pi ix}{y}y\left(\frac{b_1(s,t,y)}{2st(iy)^{2}}-N\right)-\pi\frac{x^{2}}{y^{2}}\frac{b_2(s,t,y)}{2sty}\left(1+2i\epsilon_2\frac{x}{y}+\vartheta 3\frac{x^{2}}{y^{2}}\right)\right)\\
				&=\exp\left(\frac{2\pi i\beta x}{y}\right)\exp\left(-\pi\alpha\frac{x^{2}}{y^{2}}\left(1+2i\epsilon_2\frac{x}{y}+\vartheta 3\frac{x^{2}}{y^{2}}\right)\right).
			\end{aligned}  
		\end{equation*}
		Applying the change of variable $w = \frac{x}{y}$ and employing Lemma~4.1 of \cite{tyler} to the integral in (\ref{eq-qNt1}), we obtain
		\begin{align*}
			\int_{-1/3}^{1/3}\exp\left(2\pi i\beta w\right)\exp\left(-\pi\alpha w^{2}\left(1+2i\epsilon_2 w+\vartheta 3w^{2}\right)\right)y\,dw=\frac{y}{\sqrt{\alpha}}\left(1+\vartheta\frac{3.45}{\alpha}\right).
		\end{align*}
		Combining the above equation with \eqref{eq-qNt1}, we see that
		\begin{align*}
			\bar{p}_{s,t}(N) 
			&=\exp(2\pi Ny)H(iy,s,t)\frac{y}{\sqrt{\alpha}}\left(1+\vartheta\frac{3.45}{\alpha}+\vartheta\frac{\sqrt{\alpha}}{y} 1.5\exp\left(-\frac{\pi }{600y}\right)\right). 
		\end{align*}
		Since $y \le \frac{1}{1000}$ and $s>t\ge 50$, applying the bound $b_2(s,t,y) < \frac{(s+1)(t+1)}{4}$ 
		from Lemma~\ref{lemma-mu2} yields
		\begin{align*}
			\frac{\alpha^{\frac{3}{2}}}{y}\exp\left(-\frac{\pi }{600y}\right)=O(1)\quad \text{with } \alpha = \frac{b_2(s,t,y)}{2sty}.  
		\end{align*}
		Hence, the preceding estimate reduces to
		\begin{align*}
			\bar{p}_{s,t}(N)&=\exp(2\pi Ny)H(iy,s,t)\frac{y}{\sqrt{\alpha}}\left(1+O\left(\frac{1}{\alpha}\right)\right)  \\
			&=\frac{y^{\frac{3}{2}}\sqrt{2st}\exp\left(2\pi Ny\right)H(iy,s,t)}{\sqrt{b_2(s,t,y)}}
			\left(1+O\left( \frac{sty}{b_2(s,t,y)}\right)\right).  
		\end{align*}
		This completes the proof.  
	\end{proof}
	We now prove our main results. Throughout the proofs, we let $m$ denote an arbitrary positive integer. We first recall $H(z,s,t)$ from (\ref{eq-Hzst}) and $b_k(s,t,y)$ from (\ref{eq-bksty}):
	\begin{align*}
		H(z,s,t)&=\frac{\eta(2z)\eta(2stz)\eta(sz)^{2}\eta(tz)^{2}}{\eta(z)^{2}\eta(stz)^{2}\eta(2sz)\eta(2tz)} \quad\text{and}\\  b_k(s,t,y)&= 4st\mu_k(iy)+4\mu_k(stiy)+t\mu_k(2siy)+s\mu_k(2tiy)\\
		& -st\mu_{k}(2iy)-\mu_k(2stiy)-4t\mu_k(siy)-4s\mu_k(tiy).
	\end{align*}
	In part $(i)$ of the following proof, we prove the uniqueness of the saddle point $y$, while in part $(ii)$, we establish a general asymptotic formula for $\bar{p}_{s,t}(N)$.
	\begin{proof}[\textbf{\boldmath Proof of Theorem \ref{thm-1.1}$(i)$}] 
		We wish to determine the saddle point $y$ by solving the equation $\frac{d}{dz}\log H(z,s,t)=2\pi i N$ at $z=iy$. This gives
		\begin{align*}
			2\pi iN 
			&=\frac{d}{dz}\log \eta(2z)+\frac{d}{dz}\log \eta(2stz)+2\frac{d}{dz}\log \eta(sz)+2\frac{d}{dz}\log \eta(tz)\\
			&-2\frac{d}{dz}\log \eta(z)-2\frac{d}{dz}\log \eta(stz)-\frac{d}{dz}\log \eta(2sz)-\frac{d}{dz}\log \eta(2tz).    
		\end{align*}
		Using $\frac{d}{dz}\log \eta(mz)=-\frac{2\pi i}{m z^{2}}\mu_1(mz)$ from definition of $\mu_k$ and substituting $z=iy$, we obtain
		\begin{equation*}
			\frac{st\mu_{1}(2iy)+\mu_1(2stiy)+4t\mu_1(siy)+4s\mu_1(tiy)-4st\mu_1(iy)-4\mu_1(stiy)-t\mu_1(2siy)-s\mu_1(2tiy)}{2sty^{2}}=N.    
		\end{equation*}
		Thus, $y$ is a solution of $\frac{-b_1(s,t,y)}{2sty^{2}}=N.$
		Next, we show that the solution $y>0$ is unique.\\ For $my \ge 1$, Lemma~\ref{eq-(2.1)} yields
		\begin{equation}\label{eq-pf11}
			\mu_{1}(miy)=\frac{m^{2}y^{2}}{24}-\sum_{n=1}^{\infty}m^{2}y^{2}\sigma(n)\exp(-2\pi mny), 
		\end{equation} 
		and for $my<1$, Lemma \ref{eq-2.12} gives
		\begin{equation}\label{eq-pf12}
			\mu_{1}(miy)=-\frac{1}{24}+\frac{my}{4\pi}+\sum_{n=1}^{\infty}\sigma(n)\exp\left(-\frac{2\pi n}{my}\right).
		\end{equation}
		From the above two equations, it follows that
		\begin{align*}
			\lim_{y\to \infty}\frac{-b_1(s,t,y)}{2sty^{2}}=0 \quad \text{and   }\lim_{y\to 0^{+}}\frac{-b_1(s,t,y)}{2sty^{2}}=\infty.  
		\end{align*}
		Hence, for any positive integer $N$, there exists $y > 0$ such that
		\begin{align*}
			\frac{-b_1(s,t,y)}{2sty^{2}}=N.  
		\end{align*}
		From the explicit formulas for $\mu_k(z)$ given in Lemma~\ref{eq-(2.1)} and Lemma~\ref{eq-2.12}, we have\footnote{The proof proceeds by considering the cases $y\ge 1$ and $y<1$, and follows directly from Lemma~\ref{eq-(2.1)} and Lemma~\ref{eq-2.12}.} $$\mu_{2}(z)=-2\mu_{1}(z)+z\mu_{1}^{\prime}(z).$$
		Invoking Lemma~\ref{lemma-mu2}, we deduce
		\begin{align*}
			\frac{d}{dy}\left(\frac{-b_1(s,t,y)}{2sty^{2}}\right)=\frac{-b_2(s,t,y)}{2sty^{3}}<0.   
		\end{align*}
		Hence, the solution $y>0$ is unique. 
	\end{proof}
	\begin{proof}[\textbf{\boldmath Proof of Theorem \ref{thm-1.1}$(ii)$}]
		Since in (\ref{eq-solmain}), we proved that $y$ is a solution of $\frac{-b_1(s,t,y)}{2sty^{2}}-N=0$, so it is obvious that $\left|\frac{-b_1(s,t,y)}{2sty^{2}}-N\right|\ll\frac{1}{y}$. Consequently, by invoking Proposition~\ref{thm-main} and 
		the lower bound for $b_2(s,t,y)$ from Lemma~\ref{lemma-mu2}, we deduce that
		\begin{align*}
			\bar{p}_{s,t}(N)&=\frac{y^{\frac{3}{2}}\sqrt{2st}\exp\left(2\pi Ny\right)H(iy,s,t)}{\sqrt{b_2(s,t,y)}}
			\left(1+O(y)\right).  
		\end{align*} 
	\end{proof}
	We now derive explicit asymptotic bounds for $\bar{p}_{s,t}(N)$ in terms of $N$, $s$, and $t$ over different ranges of $s$ and $t$.
	\begin{proof}[\textbf{\boldmath Proof of Theorem \ref{thm-1.2}}]
		The proof is carried out under the assumption $2sty<1$. By Lemma~\ref{lemma-2.2}, for any $my<1$ and $1<\nu<1.01$, we have
		\begin{equation}\label{eq-etaS}
			\eta(miy) = (my)^{-\frac{1}{2}} \exp\left( -\frac{\pi}{12my} - \nu e^{-\frac{2\pi}{my}} \right).
		\end{equation}
		Applying the above approximation, we find
		\begin{align*}
			H(iy,s,t)=\exp\left(\frac{\pi(s-1)(t-1)}{8sty}\right)E_1(s,t,y)E_2(s,t,y),
		\end{align*}
		where $1 < \nu_j < 1.01$ for each $1 \le j \le 8$, and
		\begin{align*}
			E_1(s,t,y)&=\exp\left(-\nu_1e^{-\frac{2\pi}{2y}}-\nu_2e^{-\frac{2\pi}{2sty}}-2\nu_3e^{-\frac{2\pi}{sy}}-2\nu_4e^{-\frac{2\pi}{ty}}\right)\quad\text{and}\\
			E_2(s,t,y)&=\exp\left(2\nu_5e^{-\frac{2\pi}{y}}+2\nu_6e^{-\frac{2\pi}{sty}}+\nu_7e^{-\frac{2\pi}{2sy}}+\nu_8e^{-\frac{2\pi}{2ty}}\right).
		\end{align*}
		Substituting $H(iy,s,t)$ in Theorem~\ref{thm-1.1}$(ii)$ yields
		\begin{align}\label{eq-qNt4}
			\bar{p}_{s,t}(N)&=\frac{y^{\frac{3}{2}}\sqrt{2st}\,\exp\left(2\pi Ny+\frac{\pi (s-1)(t-1)}{8sty}\right)E_1(s,t,y)E_2(s,t,y)}{\sqrt{b_2(s,t,y)}}
			\left(1+O(y)\right).  
		\end{align}
		Next, we solve for $y$ explicitly using Theorem~\ref{thm-1.1}$(i)$. 
		Inserting $\mu_1$ from \eqref{eq-pf12} in Theorem~\ref{thm-1.1}$(i)$, we obtain
		\begin{align*}
			2sty^{2}N=\frac{(s-1)(t-1)}{8}+K_1+K_2+K_3+K_4,
		\end{align*}
		where
		\begin{align*}
			&K_1=  st\sum_{n=1}^{\infty}\sigma(n)\left[\exp\left(-\frac{2\pi n}{2y}\right)-4 \exp\left(-\frac{2\pi n}{y}\right)\right],\\
			&K_2= \sum_{n=1}^{\infty}\sigma(n)\left[\exp\left(-\frac{2\pi n}{2sty}\right)-4\exp\left(-\frac{2\pi n}{sty}\right)\right], \\
			&K_3=t\sum_{n=1}^{\infty}\sigma(n)\left[4\exp\left(-\frac{2\pi n}{sy}\right)-\exp\left(-\frac{2\pi n}{2sy}\right)\right]\,\,\text{and  } \\
			&K_4=s\sum_{n=1}^{\infty}\sigma(n)\left[4\exp\left(-\frac{2\pi n}{ty}\right)-\exp\left(-\frac{2\pi n}{2ty}\right)\right].
		\end{align*}
		Since we have already assumed that $2sty<1$, it follows from the above that $K_1+K_2+K_3+K_4=O(K_2).$ Substituting the crude approximation $y\approx \sqrt{\frac{(s-1)(t-1)}{16stN}}$ in $K_2$\footnote{Note $\frac{1}{2st}K_2=\frac{1}{2st}\left(\sum_{n=1}^{\infty}\sigma(n)\left[\exp\left(-\frac{2\pi n}{2sty}\right)-4\exp\left(-\frac{2\pi n}{sty}\right)\right]\right)=O\left(N^{-1}\right)$ for $2st \le \frac{2\pi}{\left(\frac{1}{2} + \epsilon\right) \log N} \sqrt{\frac{16stN}{(s-1)(t-1)}}$. } gives
		\begin{align}\label{eq-solutiony}
			y=\sqrt{\frac{(s-1)(t-1)}{16stN}}+O\left(N^{-\frac{3}{2}}\right).  
		\end{align}
		We can also compute 
		\begin{align}\label{eq-yinvese}
			y^{-1}=\sqrt{\frac{16stN}{(s-1)(t-1)}}+O\left(N^{-\frac{1}{2}}\right).    
		\end{align}
		Using the value of $y$ and $y^{-1}$, we obtain the following estimates
		\begin{align}\label{eq-estimate1}
			\notag
			&2\pi Ny+\frac{\pi (s-1)(t-1)}{8sty}=\pi\sqrt{\frac{N(s-1)(t-1)}{st}}+O\left(N^{-\frac{1}{2}}\right),\\
			&E_1(s,t,y)=1+O\left(N^{-\frac{1}{2}}\right)\quad \text{and}\quad E_2(s,t,y)=1+O\left(N^{-\frac{1}{2}}\right).
		\end{align}
		\begin{claim}\label{claim-3}	
			For any fixed $\epsilon > 0$, if $s$ and $t$ are coprime integers satisfying $2sty < 1$ and $2st \le \frac{2\pi}{\left(\frac{1}{2} + \epsilon\right) \log N} \sqrt{\frac{16stN}{(s-1)(t-1)}}$, then
			\begin{equation*}
				b_2(s,t,y)=\frac{(s-1)(t-1)}{4}\left(1+O\left(N^{-\frac{1}{2}}\right)\right).  
			\end{equation*}   
		\end{claim}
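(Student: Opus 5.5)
We compute $b_2(s,t,y)$ directly from its definition \eqref{eq-bksty1}, using the small-argument expansion \eqref{eq-mu2210} for each of the eight terms. This is legitimate because $2sty<1$ forces $my<1$ for every $m\in\{1,2,s,t,2s,2t,st,2st\}$.

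Each term splits as $\frac{1}{12}-\frac{my}{4\pi}$ plus an exponential tail. The constant parts combine to
\begin{align*}
\frac{1}{12}\bigl(4st+4+t+s-st-1-4t-4s\bigr)=\frac{3st-3s-3t+3}{12}=\frac{(s-1)(t-1)}{4}.
\end{align*}
The linear parts $-\frac{my}{4\pi}$, weighted by their coefficients, combine to
\begin{align*}
-\frac{y}{4\pi}\bigl(4st+4st+2st+2st-2st-2st-4st-4st\bigr)=0.
\end{align*}
So the whole linear contribution cancels, and
\begin{align*}
b_2(s,t,y)=\frac{(s-1)(t-1)}{4}+R,
\end{align*}
where $R$ is a signed combination of the series $\sum_n(\frac{2\pi n}{my}-2)\sigma(n)e^{-2\pi n/(my)}$ with coefficients of size at most $4st$.

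It remains to show $R=O\bigl((s-1)(t-1)N^{-1/2}\bigr)$. The worst tails come from the largest multiplier, namely the terms with $m=st$ and $m=2st$. They are bounded by $O\bigl(\frac{1}{sty}e^{-\pi/(sty)}\bigr)$, since $2sty<1$ keeps $\frac{\pi}{sty}\ge 2\pi$ and the sum over $n$ is dominated by $n=1$. All other tails carry smaller $m$, hence stronger exponential decay $e^{-\pi/(my)}$ with $\frac{1}{my}\ge\frac{1}{sty}\cdot\frac{s}{m}$; with their coefficients they are bounded by the same kind of quantity times $st$ at worst.

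Next we use \eqref{eq-yinvese}, $y^{-1}=\sqrt{16stN/((s-1)(t-1))}+O(N^{-1/2})$. The hypothesis $2st\le \frac{2\pi}{(\frac12+\epsilon)\log N}\sqrt{\frac{16stN}{(s-1)(t-1)}}$ is equivalent (up to the $1+o(1)$ factor) to $\frac{\pi}{sty}\ge(\frac12+\epsilon)\log N$. This gives
\begin{align*}
e^{-\pi/(sty)}\le N^{-1/2-\epsilon},
\end{align*}
mirroring the footnote's estimate of $K_2$. The polynomial prefactor $\frac{1}{sty}\ll\log N$ is absorbed by the $N^{-\epsilon}$ saving.

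For the tails with coefficient $st$ (the terms $m=1,2$), the decay is $e^{-\pi/y}\le e^{-\pi st(\frac12+\epsilon)\log N}$, which is much smaller than $N^{-1/2}$ even after multiplying by $st/y$. The terms $m=s,t,2s,2t$ are handled identically, with $e^{-\pi/(sy)}\le N^{-t(\frac12+\epsilon)}$ and similar bounds. Dividing by $\frac{(s-1)(t-1)}{4}\gg st$, we obtain $R/\frac{(s-1)(t-1)}{4}=O(N^{-1/2})$, which is the claim.

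The main obstacle is the $m=st,2st$ tail. It carries only an $O(1)$ coefficient, but its decay is the weakest. One has to check carefully that the logarithmic factor $\frac{1}{sty}$ is beaten by the $\epsilon$ margin in the hypothesis. One must also verify that the $O(N^{-1/2})$ error in $y^{-1}$ does not spoil the exponent; it changes $e^{-\pi/(sty)}$ only by a factor $1+O(N^{-1/2}/(st))$.
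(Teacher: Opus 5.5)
Your proposal is correct and follows the same route as the paper, which substitutes \eqref{eq-mu22} into all eight terms of $b_2(s,t,y)$ and invokes \eqref{eq-yinvese}. You supply the details the paper leaves implicit: the constants summing to $\frac{(s-1)(t-1)}{4}$, the exact cancellation of the $-\frac{my}{4\pi}$ terms, and the $m=2st$ tail being the dominant error.

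One step needs fixing. The hypothesis gives the \emph{lower} bound $\frac{\pi}{sty}\ge(\frac12+\epsilon)\log N-o(1)$, not $\frac{1}{sty}\ll\log N$; for bounded $s,t$ one has $\frac{1}{sty}\asymp\sqrt N$. Bounding the prefactor and the exponential separately would then give only $O(N^{-\epsilon})$. Instead, bound the product $ue^{-u}$ with $u=\frac{\pi}{sty}$ jointly, using that it is decreasing on $[1,\infty)$, which gives $\ll(\log N)N^{-1/2-\epsilon}$.

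Similarly, the $m=1,2$ decay is $e^{-\pi/y}\le N^{-st(\frac12+\epsilon)}$, without the extra factor $\pi$ you put in the exponent. This slip is harmless to the conclusion.
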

		\begin{proof}[\textbf{\boldmath Proof of Claim \ref{claim-3}}]
			By Lemma \ref{eq-2.12}, for $my<1$, we have
			\begin{align}\label{eq-mu22}
				\mu_2(miy)=\sum_{n=1}^{\infty}\left(\frac{2\pi n}{my}-2\right)\sigma(n)\exp\left(-\frac{2\pi n}{my}\right)+\frac{1}{12}-\frac{my}{4\pi}.
			\end{align} 
			Combining the above expression for $\mu_2$ in the specified ranges of $s$ and $t$ with \eqref{eq-yinvese}, we obtain
			\begin{equation*}
				b_2(s,t,y)=\frac{(s-1)(t-1)}{4}\left(1+O\left(N^{-\frac{1}{2}}\right)\right).  
			\end{equation*}   
		\end{proof}
		Inserting $y$ from \eqref{eq-solutiony} and the estimates of \eqref{eq-estimate1} 
		and Claim~\ref{claim-3} in \eqref{eq-qNt4}, we obtain
		\begin{align*}
			\bar{p}_{s,t}(N)=\frac{1}{2\sqrt{2}\,N^{\frac{3}{4}}}\left(\frac{(s-1)(t-1)}{st}\right)^{\frac{1}{4}}\exp\left(\pi\sqrt{\frac{N(s-1)(t-1)}{st}}\right)\left(1+O\left(N^{-\frac{1}{2}}\right)\right).   
		\end{align*}
		This completes the proof.	
	\end{proof}
	
	We are now ready to prove Theorem~\ref{thm-1.3}, under the assumption that $ty \geq 1$.
	\begin{proof}[\textbf{\boldmath Proof of Theorem \ref{thm-1.3}}]
		For any $my \geq 1$ and $1<\delta<1.01$, Lemma~\ref{lemma-2.1} gives 
		\begin{align}\label{eq-etaL}
			\eta(miy)=\exp\left(-\frac{\pi my}{12}-\delta e^{-2\pi my}\right).  
		\end{align}
		Applying the above equation to $\eta(2stiy)$, $\eta(stiy)$, $\eta(2siy)$, $\eta(siy)$, $\eta(2tiy)$, and $\eta(tiy)$, and applying (\ref{eq-etaS}) to $\eta(iy)$ and $\eta(2iy)$, we obtain
		\begin{align*}
			H(iy,s,t)=\frac{\sqrt{y}}{\sqrt{2}}\exp\left(\frac{\pi}{8y}\right)E_3(s,t,y)E_4(s,t,y),
		\end{align*}
		where $1<\delta_j, \nu_j<1.01$,
		\begin{align*}
			E_3(s,t,y)&=\exp\left(\delta_1e^{-4\pi sy}-2\delta_2e^{-2\pi sy}+\delta_3e^{-4\pi ty}-2\delta_4e^{-2\pi ty}\right)
			\quad\text{and}\\ 
			E_4(s,t,y)&=\exp\left(2\nu_9e^{-\frac{2\pi }{y}}-\nu_{10}e^{-\frac{2\pi }{2y}} +2\delta_5e^{-2\pi sty}-\delta_6e^{-4\pi sty}\right).
		\end{align*}
		Inserting the above $H(iy,s,t)$ in Theorem~\ref{thm-1.1}$(ii)$, we arrive at
		\begin{align}\label{eq-pNst2}
			\bar{p}_{s,t}(N)=\frac{y^{2}\sqrt{st}\,\exp\left(2\pi Ny+\frac{\pi }{8y}\right)E_3(s,t,y)E_4(s,t,y)}{\sqrt{b_2(s,t,y)}}\left(1+O(y)\right).   
		\end{align}
		We now determine $y$ from Theorem~\ref{thm-1.1}$(i)$ under the assumption $ty \geq 1$. 
		Applying \eqref{eq-pf11} to $\mu_1(2stiy)$, $\mu_1(stiy)$ $\mu_1(2siy)$, $\mu_1(siy)$, $\mu_1(2tiy)$ and $\mu_1(tiy)$, 
		along with \eqref{eq-pf12} to $\mu_1(2iy)$ and $\mu_1(iy)$, we deduce
		\begin{align}\label{eq-sdy2}
			y^{2}N+\frac{y}{4\pi}-\frac{1}{16}+\frac{K_5+K_6+K_7+K_8}{2st}=0, \end{align}
		where 
		\begin{align*}
			K_5&= st\sum_{n=1}^{\infty}\sigma(n)\left[-\exp\left(-\frac{2\pi n}{2y}\right)+4\exp\left(-\frac{2\pi n}{y}\right)\right],\\
			K_6&=\sum_{n=1}^{\infty}(2sty)^{2}\sigma(n)\left[\exp(-4\pi nsty)-\exp(-2\pi nsty)\right],\\
			K_7&=4t\sum_{n=1}^{\infty}(sy)^{2}\sigma(n)\left[\exp(-2\pi nsy)-\exp(-4\pi nsy)\right]\quad\text{and}\\
			K_8&=4s\sum_{n=1}^{\infty}(ty)^{2}\sigma(n)\left[\exp(-2\pi nty)-\exp(-4\pi nty)\right].
		\end{align*}
		Treating \eqref{eq-sdy2} as a quadratic equation in $y$, we obtain
		\begin{align*}
			y=-\frac{1}{8\pi N}+\frac{1}{4\sqrt{N}}\sqrt{1-\frac{8(K_5+K_6+K_7+K_8)}{st}+\frac{1}{4\pi^{2}N}}.  
		\end{align*}
		We bound $K_5$, $K_6$, $K_7$ and $K_8$ using the crude approximation $y \approx \frac{1}{4\sqrt{N}}$.\footnote{Since $s > t > 4\sqrt{N}$ and $y = \frac{1}{4\sqrt{N}}$, we get $sty \ge 4\sqrt{N}$ and $\frac{1}{2st}(K_5+K_6) = O\left(N^{-2}\right)$. For $ty \ge 1$, we have $ \frac{1}{2st}(K_7+K_8) \le C_2 N^{-1/2}$ for some constant $C_2$ independent of $N$.} Hence,
		\begin{align}\label{eq-sol2}
			y=\frac{1}{4\sqrt{N}}+\frac{C_1}{N}+\frac{C_3(N,s,t)}{N}+O\left(N^{-\frac{3}{2}}\right),   
		\end{align}
		where $C_3(N,s,t)$ depends on $C_2$ and the constant $C_1$ is independent of $N$, $s$ and $t$.
		\newline
		We may also compute $y^{-1}$ as
		\begin{align}\label{eq-sol2yin}
			y^{-1}=  4\sqrt{N}\left(1-\frac{4C_1}{\sqrt{N}}-\frac{4C_3(N,s,t)}{\sqrt{N}}+O\left(N^{-1}\right)\right). 
		\end{align}
		Using \eqref{eq-sol2} and \eqref{eq-sol2yin}, and setting $E_3(N,s,t) := E_3(s,t,y)$, we derive the following estimates:
		\begin{align}\label{eq-esti2}
			&2\pi Ny+\frac{\pi }{8y}=\pi \sqrt{N}+O\left(N^{-\frac{1}{2}}\right), \quad E_4(s,t,y)=\left(1+O\left(N^{-\frac{1}{2}}\right)\right)\quad \text{and}\\ 
			\notag
			E_3(N,s,t)=&\exp\left(\delta_1\exp\left(-\frac{\pi s}{\sqrt{N}}\left(1+O\left(N^{-\frac{1}{2}}\right)\right)\right)-2\delta_2\exp\left(-\frac{\pi s}{2\sqrt{N}}\left(1+O\left(N^{-\frac{1}{2}}\right)\right)\right)\right)\\
			\notag
			\times & \exp\left(\delta_3\exp\left(-\frac{\pi t}{\sqrt{N}}\left(1+O\left(N^{-\frac{1}{2}}\right)\right)\right)-2\delta_4\exp\left(-\frac{\pi t}{2\sqrt{N}}\left(1+O\left(N^{-\frac{1}{2}}\right)\right)\right)\right).
		\end{align}
		\begin{claim}\label{claim-2}
			If $s$ and $t$ are coprime integers with $s > t > 4\sqrt{N}$ and $ty \ge 1$, then
			\begin{align*}
				b_2(s,t,y)= \frac{st}{4}\left(1+O\left(N^{-\frac{1}{2}}\right)\right).  
			\end{align*}
		\end{claim}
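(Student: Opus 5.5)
We will compute $b_2(s,t,y)$ term by term, in the same way as in part $(ii)$ of Lemma~\ref{lemma-mu2}. The difference is that we now track each term as an asymptotic in $N$, using the explicit saddle point \eqref{eq-sol2} and its inverse \eqref{eq-sol2yin}. In the regime $ty\ge 1$ the two arguments $iy$ and $2iy$ are ``small'' (since $y\approx\frac{1}{4\sqrt N}<1$), while $tiy, 2tiy, siy, 2siy, stiy, 2stiy$ all have imaginary part at least $1$.

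\textbf{Small arguments.} We apply \eqref{eq-mu22} with $m=1,2$. This gives
\begin{align*}
4st\mu_2(iy)-st\mu_2(2iy)=st\left(\frac{4}{12}-\frac{1}{12}\right)-st\left(\frac{4y}{4\pi}-\frac{2y}{4\pi}\right)+R_1,
\end{align*}
that is, $\frac{st}{4}-\frac{sty}{2\pi}+R_1$. Here $R_1$ collects the series terms, each of size $O\!\left(st\, y^{-1}e^{-\pi/y}\right)=O\!\left(st\,\sqrt N e^{-4\pi\sqrt N(1+o(1))}\right)$. The linear term satisfies $\frac{sty}{2\pi}=\frac{st}{4}\cdot O(y)=\frac{st}{4}O(N^{-1/2})$ by \eqref{eq-sol2}, and $R_1$ is far smaller than this.

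\textbf{Large arguments.} We apply \eqref{eq-mu2211} to the remaining six terms. The terms $4\mu_2(stiy)$ and $\mu_2(2stiy)$ are bounded by $O\!\left((sty)^3e^{-2\pi sty}\right)$. Since $sty\ge s\ge 4\sqrt N$, they are $O(1)$, and even exponentially small, so relative to $st/4$ they are $O(N^{-1/2})$. The terms $4t\mu_2(siy)$ and $t\mu_2(2siy)$ are bounded by $O\!\left(t(sy)^3e^{-2\pi sy}\right)$. The last two are bounded by $O\!\left(s(ty)^3e^{-2\pi ty}\right)$.

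\textbf{Main obstacle.} The real difficulty is these last two families. We only know $ty\ge1$, so $(ty)^3e^{-2\pi ty}$ is bounded but not small, and the contribution $s(ty)^3e^{-2\pi ty}$ is $O(s)$. Relative to $st/4$ this is $O(1/t)$. Since $t>4\sqrt N$, we get $O(1/t)=O(N^{-1/2})$. The same argument gives $t(sy)^3e^{-2\pi sy}=O(t)$, which is $O(1/s)\cdot st=O(N^{-1/2})\,st$. For this step we will use the uniform bound $\sup_{u\ge1}u^3\sum_n 2\pi n\sigma(n)e^{-2\pi nu}<\infty$. It holds because $\sigma(n)<n^2$ and the series is dominated by its first term at $u=1$.

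Summing the three groups gives $b_2(s,t,y)=\frac{st}{4}\bigl(1+O(y)+O(1/t)+O(1/s)\bigr)$. By \eqref{eq-sol2} and $s>t>4\sqrt N$, this is $\frac{st}{4}\bigl(1+O(N^{-1/2})\bigr)$, which proves the claim.
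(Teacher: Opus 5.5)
Your proposal is correct and takes the same route as the paper, which likewise applies \eqref{eq-mu221} to $\mu_2(stiy),\mu_2(2stiy),\mu_2(siy),\mu_2(2siy),\mu_2(tiy),\mu_2(2tiy)$ and \eqref{eq-mu22} to $\mu_2(iy),\mu_2(2iy)$, then states the result without further detail. Your version adds the omitted bookkeeping. In particular, it correctly notes that when $ty\approx 1$ the terms $s\mu_2(tiy)$ and $s\mu_2(2tiy)$ are only $O(s)$, so the hypothesis $t>4\sqrt{N}$ is what makes them $O(N^{-1/2})$ relative to $st/4$.
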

		\begin{proof}[\textbf{\boldmath Proof of Claim \ref{claim-2}}]
			For any $my\ge 1$, Lemma \ref{eq-(2.1)} gives
			\begin{align}\label{eq-mu221}
				\mu_2(miy)=\sum_{n=1}^{\infty}(my)^{3}(2\pi n)\sigma(n)\exp(-2\pi mny).
			\end{align}
			Using the above equation for $\mu_2(2stiy)$, $\mu_2(stiy)$, $\mu_2(2siy)$, $\mu_2(siy)$, $\mu_2(2tiy)$, and $\mu_2(tiy)$, and applying \eqref{eq-mu22} to $\mu_2(2iy)$ and $\mu_2(iy)$, we obtain
			\begin{align*}
				b_2(s,t,y)=\frac{st}{4}\left(1+O\left(N^{-\frac{1}{2}}\right)\right).
			\end{align*}
		\end{proof}
		Substituting $y = \frac{1}{4\sqrt{N}}\left(1+O\left(N^{-\frac{1}{2}}\right)\right)$ 
		from \eqref{eq-sol2}, Claim~\ref{claim-2}, and the estimates from \eqref{eq-esti2} 
		in \eqref{eq-pNst2}, and applying the formula for $\bar{p}(N)$ from \eqref{eq-pN}, 
		we derive the following simplified expression for $\bar{p}_{s,t}(N)$:
		\begin{align*}
			\bar{p}_{s,t}(N)=&\bar{p}(N)E_3(N,s,t)\left(1+O\left(N^{-\frac{1}{2}}\right)\right). 
		\end{align*}
		This completes the proof.
	\end{proof}
	Throughout the following proof, we assume that $2ty<1$ and $sy\ge 1$.
	\begin{proof}[\textbf{\boldmath Proof of Theorem~\ref{thm-1.4}}]
		Applying \eqref{eq-etaS} to $\eta(2iy)$, $\eta(iy)$, $\eta(2tiy)$, and
		$\eta(tiy)$, and \eqref{eq-etaL} to $\eta(2stiy)$,
		$\eta(stiy)$, $\eta(2siy)$ and $\eta(siy)$, we obtain the following expression for
		$H(iy,s,t)$:
		\begin{align*}
			H(iy,s,t)= \frac{1}{\sqrt{t}}\exp\left(\frac{\pi (t-1)}{8ty}\right)E_5(s,t,y)E_6(s,t,y), 
		\end{align*}
		where $1<\delta_j, \nu_j<1.01$,
		\begin{align*}
			E_5(s,t,y)&=\exp\left(2\delta_7e^{-2\pi sty}-\delta_8e^{-4\pi sty}+\delta_9e^{-4\pi sy}-2\delta_{10}e^{-2\pi sy}\right)\quad\text{and}\\
			E_6(s,t,y)&=\exp\left(2\nu_{11}e^{-\frac{2\pi}{y}}-\nu_{12}e^{-\frac{2\pi}{2y}}+\nu_{13}e^{-\frac{2\pi}{2ty}}-2\nu_{14}e^{-\frac{2\pi}{ty}}\right).
		\end{align*}
		Inserting the above $H(iy,s,t)$ in Theorem~\ref{thm-1.1}, we derive
		\begin{align}\label{eq-barp3}
			\bar{p}_{s,t}(N)=\frac{y^{\frac{3}{2}}\sqrt{2s}\exp\left(2\pi Ny+\frac{\pi(t-1)}{8ty}\right)E_5(s,t,y)E_6(s,t,y)}{\sqrt{b_2(s,t,y)}}\left(1+O(y)\right).   
		\end{align}
		Similarly, following the same procedure as in the proofs of
		Theorems~\ref{thm-1.2} and \ref{thm-1.3}, we determine $y$ from Theorem~\ref{thm-1.1} in the given ranges of $s$ and $t$, and obtain
		\begin{align*}
			y= \frac{1}{4}\sqrt{\frac{t-1}{tN}}+\frac{C_4(N,s,t)}{N}+O\left(N^{-\frac{3}{2}}\right), 
		\end{align*}
		where $|C_4(N,s,t)|\leq C_5$ for some constant $C_5$ independent of
		$N$, $s$, and $t$.\\
		For the above $y$ and setting $E_5(N,s,t):=E_5(s,t,y)$, we obtain the following estimates in the specified range of $t$:
		\begin{align*}
			&2\pi Ny+ +\frac{\pi(t-1)}{8ty}=\pi \sqrt{N\left(1-\frac{1}{t}\right)}+O\left(N^{-\frac{1}{2}}\right), \quad E_6(s,t,y)=1+O\left(N^{-\frac{1}{2}}\right)\quad\text{and}\\
			&E_5(N,s,t)\\
			&=\exp\left(2\delta_7\exp\left(-\frac{\pi s}{2}\sqrt{\frac{t(t-1)}{N}}\left(1+O\left(N^{-\frac{1}{2}}\right)\right)\right)-\delta_8\exp\left(-\pi s\sqrt{\frac{t(t-1)}{N}}\left(1+O\left(N^{-\frac{1}{2}}\right)\right)\right)\right)\\
			&\times \exp\left(\delta_9\exp\left(-\pi s\sqrt{\frac{(t-1)}{tN}}\left(1+O\left(N^{-\frac{1}{2}}\right)\right)\right)-2\delta_{10}\exp\left(-\frac{\pi s}{2}\sqrt{\frac{(t-1)}{tN}}\left(1+O\left(N^{-\frac{1}{2}}\right)\right)\right)\right).
		\end{align*}
		\begin{claim}
			For the above value of $y$, if $s>4\sqrt{N}$ and $2t\le \frac{8\pi}{\left(\frac{1}{2}+\epsilon_1\right)\log N}\sqrt{\frac{tN}{t-1}}$ for some fixed $\epsilon_1>0$, then
			\begin{align*}
				b_2(s,t,y)=\frac{s(t-1)}{4}\left(1+O\left(N^{-\frac{1}{2}}\right)\right).    
			\end{align*}
		\end{claim}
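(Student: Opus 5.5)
We evaluate $b_2(s,t,y)$ term by term, as in Claims~\ref{claim-3} and \ref{claim-2}. For the arguments with $my<1$, namely $\mu_2(iy)$, $\mu_2(2iy)$, $\mu_2(tiy)$ and $\mu_2(2tiy)$ (note $2ty<1$), we use \eqref{eq-mu22}. For the arguments with $my\ge 1$, namely $\mu_2(siy)$, $\mu_2(2siy)$, $\mu_2(stiy)$ and $\mu_2(2stiy)$ (since $sy\ge1$), we use \eqref{eq-mu221}. The large-argument terms have no constant part, so the main term comes only from the constants $\frac{1}{12}-\frac{my}{4\pi}$ in the small-argument terms, weighted by $4st$, $-st$, $-4s$ and $s$ respectively. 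The $\frac1{12}$ parts give
\begin{align*}
\frac{4st-st-4s+s}{12}=\frac{3s(t-1)}{12}=\frac{s(t-1)}{4}.
\end{align*}
The linear parts give
\begin{align*}
-\frac{y}{4\pi}\left(4st-2st-4st+2st\right)=0,
\end{align*}
so they cancel exactly. Hence
\begin{align*}
b_2(s,t,y)=\frac{s(t-1)}{4}+R,
\end{align*}
where $R$ collects the exponential series. It remains to show $R=O\!\left(s(t-1)N^{-1/2}\right)$.

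We bound the pieces of $R$ using $y=\frac14\sqrt{\frac{t-1}{tN}}\,(1+O(N^{-1/2}))$.

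\textbf{Terms from $\mu_2(iy)$ and $\mu_2(2iy)$.} These contribute
\begin{align*}
st\sum_n\left(\frac{\pi n}{y}\right)\sigma(n)e^{-\pi n/y}\ll \frac{st}{y}\,e^{-\pi/y},
\end{align*}
with $1/y\asymp\sqrt N$. The factor $e^{-c\sqrt N}$ swamps any polynomial factor, but $s$ is unbounded above relative to $N$, so this bound is not automatically small in absolute terms. It must be compared with $s(t-1)$, and relative to that it is $O(t\sqrt N e^{-c\sqrt N})=O(N^{-1/2})$ because $t\ll\sqrt N$.

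\textbf{Terms from $\mu_2(siy)$, $\mu_2(2siy)$, $\mu_2(stiy)$, $\mu_2(2stiy)$.} These contribute
\begin{align*}
t\,(sy)^3 e^{-2\pi sy} \quad\text{and}\quad (sty)^3e^{-2\pi sty}.
\end{align*}
Here $s>4\sqrt N$ gives $sy\gg\sqrt{(t-1)/t}>c$, so only boundedness is automatic. Dividing by $s(t-1)$ gives $\frac{t}{t-1}\,y\,(sy)^2e^{-2\pi sy}\ll y\ll N^{-1/2}$, using $\sup_u u^2e^{-2\pi u}<\infty$. The $sty$ terms are handled identically.

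\textbf{Terms from $\mu_2(tiy)$ and $\mu_2(2tiy)$.} This is the main obstacle. These contribute
\begin{align*}
s\sum_n\left(\frac{2\pi n}{ty}-2\right)\sigma(n)e^{-2\pi n/(ty)},
\end{align*}
and $ty$ need not be very small. Relative to $s(t-1)$ this is $\ll \frac{1}{t\,ty}e^{-\pi/(ty)}$ (the worst factor coming from $2ty$). The hypothesis
\begin{align*}
2t\le \frac{8\pi}{(\frac12+\epsilon_1)\log N}\sqrt{\frac{tN}{t-1}},
\end{align*}
combined with $y^{-1}\approx 4\sqrt{tN/(t-1)}$, yields
\begin{align*}
\frac{\pi}{ty}\ge \left(\tfrac12+\epsilon_1\right)\log N\,(1+o(1)).
\end{align*}
Hence $e^{-\pi/(ty)}\le N^{-1/2-\epsilon_1+o(1)}$, and the extra $\epsilon_1$ absorbs the polynomial factor $1/(ty)\ll\log N$. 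Thus this contribution is $O(N^{-1/2})$ relative to $s(t-1)$. The place to be careful is the perturbation $O(N^{-1/2})$ in $y$: it changes the exponent by only $O(\log N\cdot N^{-1/2})$, so the bound survives.

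Collecting the three estimates gives $b_2(s,t,y)=\frac{s(t-1)}{4}\bigl(1+O(N^{-1/2})\bigr)$.
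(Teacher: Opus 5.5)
Your proposal is correct and follows the same route as the paper. The paper's proof is the single line that the claim follows from \eqref{eq-mu22} for $\mu_2(iy),\mu_2(2iy),\mu_2(tiy),\mu_2(2tiy)$ and from \eqref{eq-mu221} for the other four terms; you supply the omitted details, namely the constants summing to $s(t-1)/4$, the exact cancellation of the linear terms, and the relative bounds on the exponential tails.

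One small correction: $1/(ty)$ is not $\ll\log N$ when $t$ is small (it can be of size $\sqrt{N}$). The bound still holds because $ue^{-u}$ is decreasing for $u>1$ and $u=\pi/(ty)\ge(\frac12+\epsilon_1)\log N\,(1-O(N^{-1/2}))$, which gives $\frac{1}{ty}e^{-\pi/(ty)}\ll N^{-1/2-\epsilon_1}\log N$, and that is all you need.
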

		\begin{proof}
			The proof follows from \eqref{eq-mu22} and \eqref{eq-mu221} in the specified range of $s$ and $t$.
		\end{proof}
		Inserting $y$, the corresponding estimates, and the above claim in \eqref{eq-barp3}, we obtain
		\begin{align*}
			\bar{p}_{s,t}(N)=\frac{1}{2\sqrt{2t}\,N^{\frac{3}{4}}}\left(1-\frac{1}{t}\right)^{\frac{1}{4}}\exp\left(\pi \sqrt{N\left(1-\frac{1}{t}\right)}\right)E_5(N,s,t)\left(1+O\left(N^{-\frac{1}{2}}\right)\right). 
		\end{align*}
	\end{proof}
	We are now in a position to prove the higher-order Turán inequalities for $\bar{p}_{s,t}(N)$.
	\begin{proof}[\textbf{\boldmath Proof of Theorem~\ref{thm-turan}}]
		By Theorem~\ref{thm-1.2}, we can write
		\begin{align*}
			\bar{p}_{s,t}(N)\sim \frac{1}{2\sqrt{2}\,N^{\frac{3}{4}}}\left(\frac{(s-1)(t-1)}{st}\right)^{\frac{1}{4}}\exp\left(\pi\sqrt{\frac{N(s-1)(t-1)}{st}}\right).
		\end{align*}
		Hence, for any $r\ge 0$, we have
		\begin{align*}
			\frac{\bar{p}_{s,t}(N+r)}{\bar{p}_{s,t}(N)}\sim \left(\frac{N}{N+r}\right)^{\frac{3}{4}}\exp\left(\pi\sqrt{\frac{(s-1)(t-1)}{st}}\left(\sqrt{N+r}-\sqrt{N}\right)\right).  
		\end{align*}
		Taking the natural logarithm on both sides, we have
		\begin{align}\label{eq-Tu1}
			\log\left(\frac{\bar{p}_{s,t}(N+r)}{\bar{p}_{s,t}(N)}\right)\sim \pi\sqrt{\frac{(s-1)(t-1)}{st}}\left(\sqrt{N+r}-\sqrt{N}\right)-\frac{3}{4}\log \left(1+\frac{r}{N}\right).
		\end{align}
		Using $$
		\log \left(1+\frac{r}{N}\right)=\sum_{j=1}^{\infty}\frac{(-1)^{j+1}r^{j}}{jN^{j}} \quad\text{and}\quad \sqrt{N+r}-\sqrt{N}=\sum_{j=1}^{\infty}\binom{\frac{1}{2}}{j}\frac{r^{j}}{N^{j-\frac{1}{2}}},
		$$ 
		the right-hand side of \eqref{eq-Tu1} simplifies to
		\begin{align*}
			&\sum_{j=1}^{\infty} \left(\pi\sqrt{\frac{(s-1)(t-1)}{st}}\binom{\frac{1}{2}}{j}\frac{1}{N^{j-\frac{1}{2}}}+\frac{3}{4}\frac{(-1)^{j}}{jN^{j}}\right)r^{j}\\
			&=\left(\frac{\pi }{2}\sqrt{\frac{(s-1)(t-1)}{stN}}-\frac{3}{4N}\right)r-\left(\frac{\pi }{8}\sqrt{\frac{(s-1)(t-1)}{stN^{3}}}-\frac{3}{8N^{2}}\right)r^{2}\\
			&+\sum_{j=3}^{\infty}\left(\pi\sqrt{\frac{(s-1)(t-1)}{st}}\binom{\frac{1}{2}}{j}\frac{1}{N^{j-\frac{1}{2}}}+\frac{3}{4}\frac{(-1)^{j}}{jN^{j}}\right)r^{j}.
		\end{align*}
		Comparing this expression with Theorem~\ref{thm_GORZ}, we set
		\begin{align*}
			L(N)&=\frac{\pi }{2}\sqrt{\frac{(s-1)(t-1)}{stN}}-\frac{3}{4N}, \quad\delta(N)=\sqrt{\frac{\pi }{8}\sqrt{\frac{(s-1)(t-1)}{stN^{3}}}-\frac{3}{8N^{2}}},\\
			\text{and}\quad &h_j(N)=\pi\sqrt{\frac{(s-1)(t-1)}{st}}\binom{\frac{1}{2}}{j}\frac{1}{N^{j-\frac{1}{2}}}+\frac{3}{4}\frac{(-1)^{j}}{jN^{j}},\quad\text{for all $j\ge 3$.}
		\end{align*}
		A straightforward calculation shows that, for $3\leq j\leq d$,
		\begin{align*}
			\lim_{N\to\infty}\frac{h_j(N)}{\delta(N)^j}=0,
		\end{align*}
		while, for $j\geq d+1$,
		\begin{align*}
			\lim_{N\to\infty}\frac{h_j(N)}{\delta(N)^d}=0.
		\end{align*}
		Thus, the sequences $\{\bar{p}_{s,t}(N)\}$, $\{L(N)\}$, $\{\delta(N)\}$, and
		$\{h_j(N)\}$ satisfy the hypotheses of Theorem~\ref{thm_GORZ}.
		Consequently, the Jensen polynomials associated with $\bar{p}_{s,t}(N)$ admit
		an asymptotic representation in terms of Hermite polynomials. Since
		Hermite polynomials are hyperbolic, the corresponding Jensen polynomials
		are also hyperbolic for all sufficiently large $N$.
	\end{proof}
	\subsection*{AI Declaration}
	The author declares that no artificial intelligence tools or AI-assisted technologies were
	used in the preparation, writing, or mathematical analysis of this manuscript.
	
\end{document}